\newtheorem{thm}{Theorem}[section]
\newtheorem{Pros}{Proposition}[section]
\newtheorem{lemma}{Lemma}[section]
\theoremstyle{definition}
\newtheorem{define}{Definition}[section]
\theoremstyle{remark}
\newtheorem{rem}{Remark}[section]
\numberwithin{equation}{section}
\newcommand{\na}{\nabla}
\def\f{\frac}
\def\hf1{^\f{1}{1-\xi^2}}
\def\be{\begin{equation}}
\def\en{\end{equation}}
\def\bs{\begin{split}}
\def\es{\end{split}}
\newcommand{\beq}{\begin{equation}}
\newcommand{\eeq}{\end{equation}}
\newcommand{\ben}{\begin{eqnarray}}
\newcommand{\een}{\end{eqnarray}}
\newcommand{\beno}{\begin{eqnarray*}}
	\newcommand{\eeno}{\end{eqnarray*}}
\numberwithin{equation}{section}
\title[Global regularity of micropolar equations]
{Global regularity of the three-dimensional fractional micropolar equations}
\author{Dehua Wang, Jiahong Wu  and Zhuan Ye }
\address{Department of Mathematics, University of Pittsburgh,
                           Pittsburgh, PA 15260, USA}
\email{dwang@math.pitt.edu}
\address{Department of Mathematics, Oklahoma State University, 401 Mathematical
Sciences, Stillwater, OK 74078, USA}
\email{jiahong.wu@okstate.edu}
\address{Department of Mathematics and Statistics, Jiangsu Normal University,  Xuzhou, Jiangsu 221116, P. R. China (Zhuan Ye is the corresponding author)}
\email{yezhuan815@126.com}
\keywords{Micropolar equations, fractional dissipation, global regularity.}
\subjclass[2010]{35Q35, 35B65, 76B03.
}
\date{\today}
\begin{document}

\begin{abstract}
The global well-posedness of the smooth solution to the three-dimensional (3D) incompressible micropolar equations is a difficult open problem.
This paper focuses on the 3D incompressible micropolar equations with fractional dissipations $(-\Delta)^{\alpha}u$ and $(-\Delta)^{\beta}w$.
Our objective is to establish the global regularity of the fractional micropolar equations with the minimal amount of dissipations. We prove that, if  $\alpha\geq \frac{5}{4}$, $\beta\geq 0$ and $\alpha+\beta\geq\frac{7}{4}$,   the fractional 3D micropolar equations always possess a unique global classical solution for any sufficiently smooth data. In addition, we also obtain the global regularity of the 3D micropolar equations with the dissipations given by Fourier multipliers that are logarithmically weaker than the fractional Laplacian.
\end{abstract}
\maketitle

\section{Introduction}

The classical micropolar equations,  first derived by Eringen \cite{Eringen1} in 1966, govern the motion of micropolar fluids that are fluids with microstructures and   nonsymmetric stress tensors.
Micropolar fluids usually contain rigid and randomly oriented or spherical particles that have their own spins and microrotations  in a viscous medium.
Special examples of micropolar fluids are  ferrofluids, blood flows, bubbly liquids, liquid crystals, and so on. More background information can be found in \cite{Cowin, Erdo, Eringen1, Er2, Lukaszewicz2}. Mathematically the micropolar equations consist of the forced Navier-Stokes equations for the fluid velocity $u$ and the evolution equation for the microrotation $w$ representing the angular velocity of rotation of particles of the fluid.
More precisely, the three-dimensional (3D) incompressible micropolar equations are 
\begin{equation}\label{C3DMP}
\left\{\begin{array}{l}
\partial_t u+(u\cdot\nabla) u
       -(\nu +\kappa)\Delta u +\nabla p = 2\kappa\nabla \times w ,\vspace{2mm} \qquad x\in \mathbb{R}^{3},\,t>0,\\
\partial_tw + (u\cdot\nabla)  w +4\kappa w
            -\gamma\Delta w =2\kappa\nabla\times u+\mu \nabla \nabla\cdot w,
             \vspace{2mm}\\
\nabla\cdot u=0,\vspace{2mm}\\
u(x,0)=u_{0}(x),\quad w(x,0)=w_{0}(x),
\end{array}\right.
\end{equation}
where $u=u(x,t)=(u_{1}(x,t),u_{2}(x,t),u_{3}(x,t))$ represents the fluid velocity,  $w=w(x,t)=(w_{1}(x,t),w_{2}(x,t),w_{3}(x,t))$ the microrotation, $p(x,t)$ the pressure, and the
parameter $\nu$ denotes the kinematic viscosity, $\kappa$ the
microrotation viscosity,   $\gamma$ and $\mu$ the angular
viscosities, and  $u_{0}$ and $w_{0}$
are the prescribed initial data for the velocity and microrotation with the property $\nabla\cdot u_{0}=0$.

Whether or not the smooth solutions of the classical 3D micropolar equations \eqref{C3DMP} with general initial data develop finite time singularities is a difficult problem and remains open. The key difficulty is that the Laplacian dissipation is not sufficient to control the nonlinearity when applying the standard techniques to derive global a priori bounds. It is very natural to ask the question of how much dissipation one really needs in order to ensure the global
regularity. Replacing the Laplacian operators by general fractional Laplacian operators exactly serves this purpose. We are naturally led to the  following 3D fractional micropolar equations
\begin{equation}\label{3DMP}
\left\{\begin{array}{l}
\partial_t u+(u\cdot\nabla) u
       +(\nu +\kappa)(-\Delta)^{\alpha} u +\nabla p = 2\kappa\nabla \times w ,\vspace{2mm} \qquad x\in \mathbb{R}^{3},\,t>0,\\
\partial_tw + (u\cdot\nabla)  w +4\kappa w
            +\gamma(-\Delta)^{\beta} w =2\kappa\nabla\times u+\mu \nabla \nabla\cdot w,
             \vspace{2mm}\\
\nabla\cdot u=0,\vspace{2mm}\\
u(x,0)=u_{0}(x),\quad w(x,0)=w_{0}(x),
\end{array}\right.
\end{equation}
where the fractional powers $\alpha$ and $\beta$ are nonnegative and the fractional Laplacian operator $(-\Delta)^{\rho}$ is defined via the Fourier transform
$$\widehat{(-\Delta)^{\rho} f} (\xi) = |\xi|^{2\rho}\, \widehat{f}(\xi).$$
We will adopt the convention that $\alpha=0$ really means that there is no dissipation in $\eqref{3DMP}_{1}$, and  $\beta=0$ indicates that there is no dissipation in $\eqref{3DMP}_{2}$. For simplicity, we will frequently use the notation $\Lambda:=(-\Delta)^{\frac{1}{2}}$.

The system (\ref{3DMP}) with the fractional Laplacian operator is physically relevant. The fractional Laplacian operator can model various anomalous diffusion. Especially, (\ref{3DMP}) allows us to study long-range diffusive interactions. In addition,  (\ref{3DMP}) with hyperviscosity can be used in turbulence modeling
to control the effective range of the non-local dissipation and to make numerical resolutions more efficient (\cite{UU}). Studies on models with fractional Laplacian have recently gained a lot of momentum. Two recent monographs, entitled ``Nonlocal Diffusion and Applications"
\cite{BValdinoci} and ``The Fractional Laplacian" \cite{Pozrikidis}, have documented a list of
applications. Closely related to (\ref{3DMP}) are several fractionally dissipated equations including the surface quasi-geostrophic equation, the porous medium equation and the
Boussinesq equations. The surface quasi-geostrophic equation with fractional dissipation
arises in geophysical studies of strongly rotating fluid flows (see, e.g., \cite{P00}).
The porous medium equation with fractional dissipation models various anomalous diffusion
process (see, e.g., \cite{VVa}).
The fractional Boussinesq equations are used to model the anomalous attenuation to take account of the effect of kinematic and thermal
diffusion in the study of viscous flows in the thinning of atmosphere (see \cite{Gill}).
The fractional Laplacian has now found applications far beyond fluid mechanics. It is used in fractional quantum mechanics \cite{Laskin}, probability \cite{Applebaum,Bertoinl}, overdriven detonations in gases \cite{Clavind}, anomalous diffusion in semiconductor growth \cite{Woyczyh}, physics and chemistry \cite{Metzlerk}, optimization and finance \cite{ConTankov}.

In addition to their applications in engineering and physics, the micropolar equations are also mathematically significant and have attracted considerable
attention in the community of mathematical fluids. Fundamental mathematical issues such as the global regularity of   solutions have been investigated extensively  
with many interesting results 
(see, e.g.,  \cite{Boldrinidr,EGaldir,chenmm,Lukaszewicz1,
Lukaszewicz2,Lukaszewicz3,Rojaszammmc,Nowakowski,Yamaguchi}). We mention some of
the results on the 3D micropolar equations \eqref{C3DMP} that are relevant to our study in this paper. The  weak solutions
of the initial boundary-value problem for \eqref{C3DMP} was first studied by Galdi and Rionero \cite{EGaldir}. Lukaszewicz in \cite{Lukaszewicz1} used linearization and an almost fixed point thereom to establish the global existence of weak solutions for sufficiently regular initial data. By the same approach, Lukaszewicz \cite{Lukaszewicz2} established the local and global well-posedness results under asymmetric conditions. Boldrini, Dur$\rm\acute{a}$n and Rojas-Medar \cite{Boldrinidr} proved, by the Galerkin method, the local existence and uniqueness of strong solutions to the initial and boundary-value problem for bounded or unbounded domains. Yamaguchi \cite{Yamaguchi} proved, via the $L^p$-semigroup approach,  the global existence of small classical solutions in bounded domains.

There is also an array of exciting results on the 2D  micropolar equations. We  mention some of them. The 2D micropolar equations are obtained by setting
$$
u=(u_1(x_1,x_2,t),u_2(x_1,x_2,t),0), \quad w=(0,0,w_3(x_1,x_2,t)), \quad p=p(x_1,x_2,t)
$$
in \eqref{C3DMP}. The corresponding 2D fractional micropolar equations can be written as
\begin{equation}\label{2DMP}
\left\{\begin{array}{l}
\partial_t u+(u\cdot\nabla) u
       +(\nu +\kappa)(-\Delta)^{\alpha} u +\nabla p = 2\kappa\nabla^{\perp} w ,\vspace{2mm} \qquad x\in \mathbb{R}^{2},\,t>0,\\
\partial_tw + (u\cdot\nabla)  w +4\kappa w
            +\gamma(-\Delta)^{\beta} w =2\kappa\nabla^{\perp}\cdot u,
             \vspace{2mm}\\
\nabla\cdot u=0,\vspace{2mm}\\
u(x,0)=u_{0}(x),\quad w(x,0)=w_{0}(x),
\end{array}\right.
\end{equation}
where $\nabla^{\perp}:=(-\partial_{x_{2}},\,\partial_{x_{1}})^{\mathbb{T}}$.
When $\alpha=\beta=1$, any sufficiently smooth initial data leads to a unique
global solution (see, e.g., \cite{Lukaszewicz3,ConstantinF,MB2011}). In
the case of inviscid micropolar equations, namely $\alpha=\beta=0$,
the global regularity
problem turns out to be extremely difficult and remains outstandingly open.
In the last few years the 2D micropolar equations with partial dissipation have
attracted considerable interests. In \cite{DZ} Dong and Zhang obtained the global
regularity for \eqref{2DMP} with $\alpha=1$ and $\beta=0$. Xue \cite{Xue}
reexamined the case $\alpha=1,\,\beta=0$, and established the global
well-posedness in the frame work of Besov spaces. The case with  $\alpha=0$
and $\beta=1$ was settled by Dong, Li and Wu \cite{DLW}, who  proved
the global well-posedness of \eqref{2DMP} in $\mathbb R^2$, and was also able to
determine the precise larg-time behavior of the solutions to this only partially dissipated system via the process of diagonalizing the linearized system.  The corresponding initial and boundary-value problem  for (\ref{2DMP}) with $\alpha=0$
and $\beta=1$  was examined by Jiu, Liu, Wu and Yu \cite{JiuLiu}, who established the global
well-posedness in the functional setting $u_0\in H^1$ and $w_0\in H^2$. We also mention a recent preprint of Liu and Wang on the initial and boundary-value problem
for the case $\alpha=1,\,\beta=0$ \cite{LiuWang}.  Very recently, Dong, Wu, Xu and Ye \cite{Dwxye} investigated the 2D fractional micropolar equations (\ref{2DMP}) and obtained the global well-posedness for the fractional
powers $\alpha$ and $\beta$ in suitable ranges. These global regularity results are not trivial and  were achieved by
fully exploiting the fine structures of the corresponding vorticity equation and by working with carefully selected combined quantities.

To put the results of this paper in a proper broad perspective, we give a brief
description of closely related global regularity results for several set of associated systems. The first one is the incompressible Navier-Stokes equations with
hyperviscosity
\begin{equation}\label{NS}
\left\{\begin{array}{l}
\partial_t u+(u\cdot\nabla) u
+(-\Delta)^{\alpha} u +\nabla p = 0,\vspace{2mm} \qquad x\in \mathbb{R}^{3},\,t>0,\\
\nabla\cdot u=0.
\end{array}\right.
\end{equation}
Lions' book \cite{Lions} establishes the global well-posedness of (\ref{NS}) for $\alpha>\frac54$. Katz and
Pavlovi\'{c} \cite{Katz} has a nice proof for the case when $\alpha>\frac54$. An elementary and simple proof for the global regularity of (\ref{NS}) with $\alpha\ge \frac54$ is given in \cite{wujde3}.  T. Tao \cite{ttTao} was  able to reduce the hyperdissipation by a logarithm and still establish the global regularity. Tao's work was further improved by \cite{Barbatomr}.  The case $\alpha=\frac54$ is a critical index from the scaling point of view and
it is not possible at this stage to obtain the global regularity for any $\alpha<\frac54$. Very recently Yang, Jiu and Wu \cite{Yang1} sharpened this classical
result from a different perspective and established the global regularity even with one-directional hyperviscosity removed from each velocity component equation. Another closely related system is the system of Boussinesq equations.
The Boussinesq equations are among the most frequently used models for geophysical fluids. The fractional dissipated version is given by
\begin{equation}\label{3Bous}
\left\{\begin{array}{l}
\partial_t u+(u\cdot\nabla) u
+(-\Delta)^{\alpha} u +\nabla p =\theta e_{3},\vspace{2mm} \qquad x\in \mathbb{R}^{3},\,t>0,\\
\partial_t \theta + (u\cdot\nabla)\theta +(-\Delta)^{\beta}\theta=0,
\vspace{2mm}\\
\nabla\cdot u=0,
\end{array}\right.
\end{equation}
where $u(x,\,t)$ is the velocity, $p$ the pressure and $\theta$
the temperature,  and $e_{3}$ is the unit vector $(0,\,0,\,1)$. It is shown in \cite{jiuYu,Xiang,Yamazaki,Yeat15} that any smooth initial data of \eqref{3Bous} leads to a unique global
 solution if
\be \label{BouCon}
\alpha\geq \frac{5}{4},\quad \beta\geq0.
\en
A logarithmic improvement was obtained in  \cite{ye2017good}. Very recently \eqref{3Bous} with (\ref{BouCon}) is shown to possess a unique global weak solution
when the initial data $(u_0, \theta_0)$ is only in $L^2(\mathbb R^3)$ (see \cite{Nicki,Dai}).  These results do not require any thermal diffusion. Another closely related system
is the 3D incompressible fractional magnetohydrodynamic (MHD) equations, which are given by
\begin{equation}\label{3DMHD}
\left\{\begin{array}{l}
\partial_t u+(u\cdot\nabla) u
+(-\Delta)^{\alpha} u +\nabla p = (b\cdot\nabla)b,\vspace{2mm} \qquad x\in \mathbb{R}^{3},\,t>0,\\
\partial_t b + (u\cdot\nabla)b +(-\Delta)^{\beta}b=(b\cdot\nabla)u,
\vspace{2mm}\\
\nabla\cdot u=\nabla\cdot b=0,
\end{array}\right.
\end{equation}
where $u$ represents the velocity, $p$ the pressure and $b$ the magnetic field.
The MHD equations govern the dynamics of electrically conducting fluids. When
the fractional powers $\alpha$ and $\beta$ satisfy
\begin{align*}
\alpha\geq
\frac{5}{4},\quad \beta\geq0,\quad\alpha+\beta\geq\frac{5}{2},
\end{align*}
any sufficiently smooth initial data leads to a unique global solution \cite{wujde3}. Highly nontrivial logarithmic improvements were obtained in  \cite{wujmfm,TYZ,Yamazaki2,Yama}.

\vskip .1in
The micropolar equations \eqref{3DMP} share some similarities with  the Boussinesq equations \eqref{3Bous} and the MHD equations \eqref{3DMHD}. They all contain the Navier-Stokes equations as the centerpiece and all current results
need $\alpha\ge \frac54$ as a condition for global regularity, but there are significant differences. There are difficulties that are unique for the  micropolar equations. The MHD equations involve nice canceling properties due to the presence of the four nonlinear terms, but the micropolar equations do not have
similar canceling properties. Due to the presence of the term $\nabla \nabla\cdot w$, we cannot directly estimate $\|w\|_{L^{q}}$ with $q>2$. In addition, for small $\beta>0$, it is really hard to derive any Sobolev estimates for $w$. Finally, some of the approaches for the 2D micropolar equations fail for the 3D micropolar equations due to the presence of the vortex stretching term.  This paper overcomes
these difficulties and establish three main global regularity results for the
 micropolar equations \eqref{3DMP}. Our first result can be stated as follows.

 \begin{thm}\label{Th1}
 	Assume $(u_0,w_0)  \in H^s(\mathbb{R}^3)$ with $s>\frac{5}{2}$ and $\nabla\cdot u_0=0$.
 	If $\alpha$ and $\beta$ satisfy
 	\begin{align*} \alpha\geq
 	\frac{5}{4},\quad \beta\geq0,\quad\alpha+\beta\geq\frac{7}{4},
 	\end{align*}
 	then \eqref{3DMP} admits a unique global regular
 	solution $(u,w)$ satisfying for any given $T>0$,
 		$$ u\in  L^{\infty}([0,T];H^s(\mathbb{R}^{3}))\cap
 		L^2([0,T];H^{s+\alpha}(\mathbb{R}^{3})),$$
		and
 		$$ w\in  L^{\infty}([0,T];H^s(\mathbb{R}^{3}))\cap L^2([0,T];H^{s+\beta}(\mathbb{R}^{3})).$$
 \end{thm}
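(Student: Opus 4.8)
\emph{Proof strategy.} The plan follows the standard route for such systems: first construct a local-in-time smooth solution together with a continuation criterion, then establish global a priori bounds. For the local theory one can use a Friedrichs regularization (or a Kato-type iteration) and energy estimates in $H^s$, $s>\tfrac52$; here it helps to note that $-\mu\,\nabla\nabla\cdot w$ has a non-negative Fourier symbol, so it only reinforces (never weakens) the dissipation on $w$ and does not spoil the parabolic/energy structure. This gives a unique solution $(u,w)\in C([0,T_*);H^s)$ on a maximal interval $[0,T_*)$, which continues past $T_*$ provided $\sup_{[0,T_*)}\big(\|u\|_{H^s}+\|w\|_{H^s}\big)<\infty$; hence it suffices to bound $\|u(t)\|_{H^s}+\|w(t)\|_{H^s}$ on an arbitrary finite interval. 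Uniqueness follows at the end from an $L^2$-in-space estimate for the difference of two solutions with the same data, handled with the same estimates as below.

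The first step is the basic energy balance. Pairing the velocity equation with $u$ and the microrotation equation with $w$ and adding, the convective terms vanish because $\nabla\cdot u=0$, the pressure drops out, and the two linear coupling integrals $2\kappa\int(\nabla\times w)\cdot u$ and $2\kappa\int(\nabla\times u)\cdot w$ coincide (the curl being formally self-adjoint), so their sum is $4\kappa\int(\nabla\times u)\cdot w\le 4\kappa\|\nabla u\|_{L^2}\|w\|_{L^2}$, which---since $\alpha>1$---is absorbed by interpolation and Young's inequality into the velocity dissipation $(\nu+\kappa)\|\Lambda^\alpha u\|_{L^2}^2$ and the damping $4\kappa\|w\|_{L^2}^2$; moreover $\mu\int(\nabla\nabla\cdot w)\cdot w=-\mu\|\nabla\cdot w\|_{L^2}^2\le0$. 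Gronwall's inequality then yields, for every $T>0$,
\[
\sup_{[0,T]}\big(\|u\|_{L^2}^2+\|w\|_{L^2}^2\big)+\int_0^T\big(\|\Lambda^\alpha u\|_{L^2}^2+\|\Lambda^\beta w\|_{L^2}^2+\|w\|_{L^2}^2\big)\,d\tau\le C(T,u_0,w_0).
\]

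The core of the proof is a regularity bootstrap carried out simultaneously for the two unknowns. For suitable indices $\sigma,\sigma'$ (to be raised in finitely many rounds), apply $\Lambda^\sigma$ to the velocity equation and pair with $\Lambda^\sigma u$, apply $\Lambda^{\sigma'}$ to the microrotation equation and pair with $\Lambda^{\sigma'}w$, and add. Several mechanisms are at play. (i) The Navier--Stokes nonlinearity $\int\Lambda^\sigma\big((u\cdot\nabla)u\big)\cdot\Lambda^\sigma u$ is controlled exactly as for the three-dimensional hyperdissipative Navier--Stokes equations with $\alpha\ge\tfrac54$ (see \cite{Lions,Katz,wujde3}): writing $(u\cdot\nabla)u=\nabla\cdot(u\otimes u)$, integrating by parts, redistributing derivatives and using product estimates, one bounds it by $\varepsilon\|\Lambda^{\sigma+\alpha}u\|_{L^2}^2+g(\tau)\|\Lambda^\sigma u\|_{L^2}^2$ with $\int_0^T g\,d\tau<\infty$ by the $L^2_tH^\alpha$-bound just obtained---the index $\tfrac54$ being exactly what makes this closure succeed (equality being the borderline case). (ii) The $\mu\,\nabla\nabla\cdot w$ term contributes $-\mu\|\Lambda^{\sigma'}\nabla\cdot w\|_{L^2}^2\le0$ and $4\kappa w$ contributes the damping $4\kappa\|\Lambda^{\sigma'}w\|_{L^2}^2$, both favorable. (iii) The coupling $2\kappa\int(\nabla\times u)\cdot\Lambda^{2\sigma'}w\le 2\kappa\|\Lambda^{\sigma'+1}u\|_{L^2}\|\Lambda^{\sigma'}w\|_{L^2}$ in the microrotation equation has its factor $\|\Lambda^{\sigma'+1}u\|_{L^2}$ absorbed, using $\alpha>1$, into the velocity dissipation. (iv) The genuinely delicate terms are those in which $w$ appears with an extra derivative and must be treated with only its $L^\infty_tL^2$ and $L^2_tH^\beta$ regularity available---recall the $\nabla\nabla\cdot w$ term forbids any $L^q$-bound on $w$ for $q>2$: namely the forcing $2\kappa\int(\nabla\times w)\cdot\Lambda^{2\sigma}u$ in the velocity equation and the transport $\int\Lambda^{\sigma'}\big((u\cdot\nabla)w\big)\cdot\Lambda^{\sigma'}w$ in the microrotation equation. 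For these one integrates by parts so as to shift the surplus $w$-derivatives onto $u$ (respectively estimates $\nabla\cdot(u\otimes w)$ via Sobolev embeddings and product inequalities), uses the $L^2_t$-control of $\Lambda^\beta w$ together with the $L^\infty_tL^2$-bound on $w$, and absorbs the resulting higher-order velocity norm into $\|\Lambda^{\sigma+\alpha}u\|_{L^2}$; this absorption is admissible exactly under a constraint on the attainable indices that reduces to $\alpha+\beta\ge\tfrac74$. Adding the two estimates and applying Gronwall's lemma (the coefficients being time-integrable by the bounds from the previous round) upgrades $u$ and $w$ to $L^\infty_t\dot H$ and $L^2_t\dot H$ spaces of strictly higher order; iterating increases the order by a fixed positive amount at each round, so after finitely many rounds one reaches the prescribed $s>\tfrac52$ together with the claimed $L^2(0,T;H^{s+\alpha})$ and $L^2(0,T;H^{s+\beta})$ memberships (supplied by the dissipation terms in the final round). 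Combined with the continuation criterion, this yields the global solution.

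I expect step (iv) to be the main obstacle. There is essentially no smoothing of $w$ when $\beta$ is small (none at all when $\beta=0$), and since the $\nabla\nabla\cdot w$ term rules out $L^q$-estimates on $w$ for $q>2$, every term carrying too many derivatives of $w$ has to be absorbed into the surplus regularity of the velocity field; the careful accounting of which Sobolev norm of $w$ is available in $L^2$ in time against which Sobolev norm of $u$ is controlled by its dissipation is precisely what pins down the requirement $\alpha+\beta\ge\tfrac74$ (together with $\alpha\ge\tfrac54$, needed for the Navier--Stokes part). Everything else---the energy estimate, the Navier--Stokes nonlinearity, the favorable signs of the $\nabla\nabla\cdot w$ and damping terms, the coupling in the microrotation equation, uniqueness, and the propagation of regularity once $s>\tfrac52$ is reached---is comparatively routine once this balance is secured.
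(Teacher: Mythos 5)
Your overall strategy --- local existence with a continuation criterion, the basic $L^2$-energy estimate, combined higher-order estimates for $(u,w)$ absorbed into the velocity dissipation, and a closing Gronwall argument --- is exactly the route the paper takes. Where I see a genuine gap is in the claim that a single bootstrap loop ``increases the order by a fixed positive amount at each round'' all the way up to $H^s$.

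The obstruction is the transport commutator $\int [\Lambda^{\sigma'}, u\cdot\nabla]w \cdot \Lambda^{\sigma'}w\,dx$. When the only available velocity regularity is $L^2_t H^{\alpha}$, the paper bounds this term via a commutator estimate in the negative Besov space $B^{-\beta}_{2,2}$ (quoted from Lemma~2.6 of \cite{Yeacap18}); this is precisely where $\alpha+\beta\ge \tfrac74$ enters, through the admissible range of the exponent $m_0$ in the Sobolev embedding, and the inequality is only valid for $\sigma'<1+\beta$. That ceiling stalls a naive increment-by-increment loop. The paper's argument is instead a specific staged sequence: (1) a $u$-only estimate in $L^\infty_t H^{\alpha+\beta-1}\cap L^2_t H^{2\alpha+\beta-1}$, using only the basic $L^2$-bound on $w$, which supplies the $\|u\|_{H^{2\alpha+\beta-1}}$-factor appearing in the commutator bound (Lemma~2.2); (2) a combined estimate pairing $\Lambda^{5/4}u$ with $\Lambda^\rho w$, $\rho<1+\beta$, built on the negative-Besov commutator estimate and the bound from (1), which delivers the critical $\int_0^T\|\Lambda^{5/2}u\|^2_{L^2}\,dt<\infty$ (Lemma~2.3); (3) with that velocity regularity secured, a \emph{different} transport estimate --- ordinary Kato--Ponce together with $\|w\|_{L^\infty}\leq C\|w\|_{H^{\sigma+\beta}}$ for $\sigma>\tfrac32-\beta$ --- lifts $w$ to $L^\infty_t H^{3/2}$ and gives $\int_0^T\|w\|_{L^\infty}^2\,dt<\infty$ (Lemma~2.4); (4) only then does the closing $H^s$-estimate run. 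The technique for the transport term changes between stages (2) and (3), because the critical $L^2_t H^{5/2}$-bound on $u$ must be in hand before $w$ can be pushed past the $\sigma'<1+\beta$ threshold. This two-phase structure, with the specific negative-Besov commutator inequality and its regularity ceiling, is what your step (iv) elides, and it is the technical heart of the proof. A smaller point: the endpoint $\beta=0$ (which then forces $\alpha\ge\tfrac74$) falls outside Lemmas~2.3--2.4, which require $\beta>0$; it is covered by specializing Theorem~1.2 to $g\equiv 1$.
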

\begin{rem}\rm
As a special consequence, Theorem \ref{Th1} states that one-derivative dissipation in the equation of the  microrotation is sufficient for global regularity if the velocity equation has $(-\Delta)^{\frac54} u$. As a comparison with the MHD equations, the micropolar equations require less dissipation (by the order of $\frac34$ derivatives) to achieve the global regularity.\vskip .1in
\end{rem}

\begin{rem}\rm
It appears that the condition on $\beta$ with $\alpha \ge \frac54$ and
$\alpha + \beta \ge \frac{7}{4}$ is sharp. This means that when $\alpha =\frac54$, we need $\beta\ge \frac12$, or at least one derivative dissipation in the equation of $w$. The sharpness is implied by the technical estimates. As we know, in order to obtain the global regularity, one needs to control the time integral of the velocity Lipschitz norm, namely
$$
\int_0^T \|\nabla u\|_{L^\infty(\mathbb R^3)}\,dt.
$$
In the three-dimensional space, more or less, one needs to bound
$$
\int_0^T \|\Lambda^{\frac52} u\|_{L^2(\mathbb R^3)} dt.
$$
Due to the presence of $\na\times w$ in the velocity equation, this requires the estimate
on the time integral of $\|\nabla w\|_{L^2}$. We need at least one-derivative dissipation in the $w$-equation in order to bound
$\|\nabla w\|_{L^2}$ suitably. More details can be found in the proof of Theorem \ref{Th1}.
\end{rem}

Our next goal is to consider the 3D micropolar equations with the logarithmical dissipations of the form
\begin{equation}\label{logorga11}
\left\{\begin{array}{l}
\partial_t u+(u\cdot\nabla) u
       +\mathcal{L}^{2} u +\nabla p =  \nabla \times w ,\vspace{2mm} \qquad x\in \mathbb{R}^{3},\,t>0,\\
\partial_tw + (u\cdot\nabla) w +2 w+(-\Delta)^{\beta}w
             = \nabla\times u+  \nabla \nabla\cdot w,
             \vspace{2mm}\\
\nabla\cdot u=0,\vspace{2mm}\\
u(x,0)=u_{0}(x),\quad w(x,0)=w_{0}(x),
\end{array}\right.
\end{equation}
where the operator $\mathcal{L}$ is defined by
$$\widehat{\mathcal{L}u}(\xi)=\frac{|\xi|^{\alpha}}{g(\xi)}\widehat{u}(\xi)$$
for some non-decreasing radial function $g(\tau)\geq 1$ defined on $\tau\geq 0$.

\vskip .1in
Theorem \ref{Th1} for the borderline case $\beta=0$ can be improved logarithmically.  More precisely, the following improved global regularity result for $\beta=0$ holds. For the sake of simplicity, we may assume $\nu =\kappa=\frac{1}{2}$ and $\mu=\gamma=1$. The sizes of these coefficients do not play a role in our results.

\begin{thm}\label{addTh2}
Consider \eqref{logorga11} with zero angular viscosity, namely,
\begin{equation}\label{log3DMP}
\left\{\begin{array}{l}
\partial_t u+(u\cdot\nabla) u
       +\mathcal{L}^{2} u +\nabla p =  \nabla \times w ,\vspace{2mm} \qquad x\in \mathbb{R}^{3},\,t>0,\\
\partial_tw + (u\cdot\nabla)  w +2 w
             = \nabla\times u+  \nabla \nabla\cdot w,
             \vspace{2mm}\\
\nabla\cdot u=0,\vspace{2mm}\\
u(x,0)=u_{0}(x),\quad w(x,0)=w_{0}(x).
\end{array}\right.
\end{equation}
Let $(u_{0},\,w_{0})\in H^{s}(\mathbb{R}^{3})$ with $s>\frac{5}{2}$ and $\nabla\cdot u_{0}=0$. If $\alpha\geq\frac{7}{4}$ and $g$ further satisfies
\begin{equation}\label{logcobd}
\int_{e}^{\infty}\frac{d\tau}{\tau\sqrt{\ln \tau} g^{2}(\tau)}=\infty,\end{equation}
then \eqref{log3DMP} admits a unique global solution $(u,\,w)$ such that, for any given $T>0$,
$$(u,\,w)\in L^{\infty}([0, T]; H^{s}(\mathbb{R}^{3})),\quad \mathcal{L} u\in L^{2}([0, T]; H^{s}(\mathbb{R}^{3})).
$$
\end{thm}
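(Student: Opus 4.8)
The plan is to reduce the theorem to an a priori bound and then close that bound by means of the divergence condition \eqref{logcobd}, adapting the scheme used to prove Theorem~\ref{Th1} in the borderline case $\beta=0$, $\alpha=\tfrac74$. By a standard mollification (or Galerkin) scheme with energy estimates, \eqref{log3DMP} admits a unique local-in-time solution $(u,w)\in C([0,T_0);H^s)$, and this solution can be continued as long as $\int_0^T\|\nabla u\|_{L^\infty}\,dt<\infty$: once this integral is finite, $\eqref{log3DMP}_2$ is a damped transport equation whose forcing $\nabla\times u$ is controlled by the dissipation of $u$ (since $\alpha\ge\tfrac74>1$) and whose term $\nabla\nabla\cdot w$ produces the sign-definite quantity $\|\nabla\cdot w\|_{L^2}^2$ in energy estimates, so that the $H^s$-norm of $w$, and then of $u$, is propagated by Gronwall's inequality. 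Thus everything reduces to bounding $\int_0^T\|\nabla u\|_{L^\infty}\,dt$ on an arbitrary finite interval; in three dimensions this is essentially a bound on $\int_0^T\|\Lambda^{5/2}u\|_{L^2}\,dt$.

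First I would record the $L^2$-level estimate: pairing $\eqref{log3DMP}_1$ with $u$ and $\eqref{log3DMP}_2$ with $w$ and adding, the divergence-free condition removes the transport terms, the two coupling terms combine after integration by parts, and using the damping $2\|w\|_{L^2}^2$, the good sign of $\|\nabla\cdot w\|_{L^2}^2$, and the elementary inequality $\|\nabla u\|_{L^2}\le\varepsilon\|\mathcal{L}u\|_{L^2}+C_\varepsilon\|u\|_{L^2}$ (valid because $\alpha\ge\tfrac74>1$ and \eqref{logcobd} forces $g(\tau)=o(\tau^\delta)$ for every $\delta>0$), one obtains $u\in L^\infty([0,T];L^2)$, $\mathcal{L}u\in L^2([0,T];L^2)$, $w\in L^\infty([0,T];L^2)$ and $\nabla\cdot w\in L^2([0,T];L^2)$. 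Then I would turn to the higher-order estimates, setting $Y(t):=e+\|u(t)\|_{H^s}^2+\|w(t)\|_{H^s}^2$ and combining a $\dot H^s$ estimate for $u$ with a $\dot H^s$ estimate for $w$. Several terms are manageable: in the $u$-estimate the coupling term $\langle\Lambda^s\nabla\times w,\Lambda^s u\rangle$ is moved by integration by parts onto $u$ and absorbed into $\varepsilon\|\Lambda^{s+1}u\|_{L^2}^2\le\varepsilon C\|\mathcal{L}\Lambda^s u\|_{L^2}^2+C$ (again using $\alpha>1$) at the cost of $C\|\Lambda^s w\|_{L^2}^2\le CY$; the dangerous piece $C\|\Lambda^s u\|_{L^2}\|\nabla w\|_{L^\infty}\|\Lambda^s w\|_{L^2}$ of the $w$-transport commutator is controlled by the \emph{sharp} Gagliardo--Nirenberg bound $\|\nabla w\|_{L^\infty}\le C\|w\|_{L^2}^{1-\frac{5}{2s}}\|\Lambda^s w\|_{L^2}^{\frac{5}{2s}}$ (where $s>\tfrac52$ is crucially used) combined with the bound $\|\Lambda^s u\|_{L^2}\le C\|\mathcal{L}\Lambda^s u\|_{L^2}+C$ (which lies in $L^2_t$), which makes this term strictly sub-linear in $Y$; and the Navier--Stokes term $\langle\Lambda^s(u\cdot\nabla u),\Lambda^s u\rangle=-\langle\Lambda^s(u\otimes u),\Lambda^s\nabla u\rangle$ is treated by the sharp subcritical product estimate available because $\alpha\ge\tfrac74>\tfrac54$.

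The genuine difficulty is that this last estimate --- and the bound on $\int_0^T\|\Lambda^{5/2}u\|_{L^2}\,dt$ extracted from the dissipation --- naturally involve the norm $\|\Lambda^{s+\alpha}u\|_{L^2}$ rather than the weaker dissipation norm $\|\mathcal{L}\Lambda^s u\|_{L^2}$, and these differ exactly by powers of $g$. To pass from one to the other I would split $u$ at a frequency $N=N(t)$ taken to be a suitable power of $Y(t)$: on the frequencies $\le N$ one pays a factor $g^2(N)$ to trade $\|\Lambda^{s+\alpha}\cdot\|$ for $\|\mathcal{L}\Lambda^s\cdot\|$, while on the frequencies $>N$ one gains a negative power of $N$ from $s>\tfrac52$ (and from $\alpha>\tfrac54$). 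Optimizing $N$ as a power of $Y$ and keeping track of the number ($\sim\ln N\sim\ln Y$) of dyadic shells, all the estimates above combine into a differential inequality of the schematic form
\[
\frac{d}{dt}Y(t)\ \le\ C\,\Theta(t)\,Y(t)\,\sqrt{\ln\!\big(e+Y(t)\big)}\;g^{2}\!\big(c\,[Y(t)]^{\sigma}\big),\qquad \int_0^T\Theta(t)\,dt\le C(T),
\]
for some $\sigma>0$, where $\Theta$ collects the $L^1_t$ quantities supplied by the $L^2$-level bounds.

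Finally, since $g$ is radial, non-decreasing and slowly varying, $g^2(c\,Y^\sigma)\asymp g^2(Y)$; dividing the displayed inequality by $Y\sqrt{\ln(e+Y)}\,g^2(Y)$ and integrating against $\Phi(r):=\int_e^r\frac{d\tau}{\tau\sqrt{\ln\tau}\,g^2(\tau)}$ gives $\Phi(Y(t))\le\Phi(Y(0))+C(T)$, and \eqref{logcobd} is precisely the assertion $\Phi(\infty)=\infty$, so $Y(t)\le\Phi^{-1}\big(\Phi(Y(0))+C(T)\big)<\infty$ on $[0,T]$. This yields $(u,w)\in L^\infty([0,T];H^s)$; feeding it back into the continuation criterion gives global existence, the bound $\mathcal{L}u\in L^2([0,T];H^s)$ is then the dissipation integral, and uniqueness follows from a standard $L^2$-energy estimate for the difference of two solutions. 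I expect the displayed differential inequality to be the main obstacle: because $\beta=0$ there is no regularization at all on the microrotation, so the Navier--Stokes criticality at the order-$\tfrac74$ threshold, the coupling $\nabla\times w$ (which a priori costs one full, uncontrolled derivative of $w$), and the $w$-transport commutator must all be absorbed by the single, logarithmically weakened operator $\mathcal{L}$; forcing the powers of $Y$ to land exactly at the borderline of integrability permitted by \eqref{logcobd}, and not strictly beyond it, is what compels the use of the sharp nonlinear estimates together with the energy-adapted frequency truncation, and it leaves essentially no room to spare.
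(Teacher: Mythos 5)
Your overall strategy --- energy estimates, a frequency truncation to trade $\|\Lambda^{s+\alpha}u\|_{L^2}$ for the genuine dissipation $\|\mathcal{L}\Lambda^s u\|_{L^2}$ at the cost of a $g^2(N)\sqrt{\ln N}$ factor, and an Osgood lemma powered by \eqref{logcobd} --- is the same mechanism the paper uses. The structural difference is that you try to close the Osgood-type differential inequality directly at the $H^s$ level, with $Y=e+\|u\|_{H^s}^2+\|w\|_{H^s}^2$, whereas the paper (Lemma~\ref{dvefvb21}) first closes an Osgood inequality at an \emph{intermediate} regularity $\|\Lambda^{k_1}u\|_{L^2}^2+\|\Lambda^{\widetilde\sigma}w\|_{L^2}^2$ with $3/4<k_1<\widetilde\sigma+3/4$ and $\widetilde\sigma\in(3/2,5/2)$, which yields $\|\nabla u\|_{L^\infty}\in L^2_t$ and $\|w\|_{L^\infty}\in L^\infty_t$, and only then runs a plain Gronwall argument at $H^s$. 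This is not an aesthetic difference: your direct route has a genuine gap.

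The gap is in your treatment of the $w$-transport commutator. At the $H^s$ level the Kato--Ponce estimate leaves you with a term of type $\|\Lambda^s u\|_{L^2}\,\|\nabla w\|_{L^\infty}\,\|\Lambda^s w\|_{L^2}$ (or, equivalently, $\|\Lambda^{s+1}u\|_{L^2}\,\|w\|_{L^\infty}\,\|\Lambda^s w\|_{L^2}$). You propose to control it by interpolating $\|\nabla w\|_{L^\infty}\le C\|w\|_{L^2}^{1-\theta}\|\Lambda^s w\|_{L^2}^{\theta}$, $\theta=\tfrac{5}{2s}$, together with $\|\Lambda^s u\|_{L^2}\le C\|\mathcal{L}\Lambda^s u\|_{L^2}+C$, and you assert that the latter lies in $L^2_t$. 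That assertion is circular: $\int_0^T\|\mathcal{L}\Lambda^s u\|_{L^2}^2\,dt<\infty$ is exactly part of the conclusion of the theorem, not an a priori input. Once you absorb that factor honestly by Young's inequality against the dissipation, the residual is $C\|\Lambda^s w\|_{L^2}^{2(1+\theta)}\sim Y^{1+\theta}$ with $\theta>0$ for every finite $s$, i.e.\ a genuinely superlinear Riccati term, not an Osgood-admissible $Y\cdot g^2(Y)\sqrt{\ln Y}$ term. The root of the trouble is that $w$ has \emph{no} dissipation at all ($\beta=0$), so no frequency truncation on the $w$-side can help; you must first establish that $\|w\|_{L^\infty}$ (or $\|\nabla w\|_{L^\infty}$, etc.) is a priori bounded in time \emph{before} attempting the $H^s$ estimate. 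That is precisely what the paper's intermediate Lemma~\ref{dvefvb21} does: by taking $\widetilde\sigma$ strictly below $5/2$, the commutator $[\Lambda^{\widetilde\sigma},u\cdot\nabla]w$ can be closed using only finite-$p$ Sobolev embeddings (no $L^\infty$ norm of $w$ needed), yielding $K_2\le C(\|\nabla u\|_{L^\infty}+\|\Lambda^{5/2}u\|_{L^2})\,\|\Lambda^{\widetilde\sigma}w\|_{L^2}^2$, which is \emph{linear} in the intermediate energy $A(t)$, and the Osgood inequality genuinely closes; the resulting bound $\|\Lambda^{\widetilde\sigma}w\|_{L^2}\in L^\infty_t$ with $\widetilde\sigma>3/2$ then supplies $\|w\|_{L^\infty}\in L^\infty_t$. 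Without this intermediate tier, your proposal does not go through.
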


\begin{rem}\rm
We remark that the typical examples satisfying the condition (\ref{logcobd}) are
\begin{equation}
\begin{split}
& g(\xi)=\big[\ln(e+|\xi|)\big]^{\frac{1}{4}}; \\
& g(\xi)=\big[\ln(e+|\xi|)\big]^{\frac{1}{4}}\big[\ln (e+\ln(e+|\xi|)) \big]^{\frac{1}{2}};\\
& g(\xi)=\big[\ln(e+|\xi|)\big]^{\frac{1}{4}}\big[\ln (e+\ln(e+|\xi|)) \ln(e+\ln(e+\ln(e+|\xi|)))\big]^{\frac{1}{2}}.\nonumber
\end{split}
\end{equation}
\end{rem}

\vskip .2in
For the case $\alpha\ge \frac54$ and $\beta>0$, Theorem \ref{Th1} can also be improved logarithmically.

\begin{thm}\label{addTh3}
Consider \eqref{logorga11} with angular viscosity, namely,
\begin{equation}\label{loggnDMP}
\left\{\begin{array}{l}
\partial_t u+(u\cdot\nabla) u
       +\mathcal{L}^{2} u +\nabla p =  \nabla \times w ,\vspace{2mm} \qquad x\in \mathbb{R}^{3},\,t>0,\\
\partial_tw + (u\cdot\nabla) w +2 w+(-\Delta)^{\beta}w
             = \nabla\times u+  \nabla \nabla\cdot w,
             \vspace{2mm}\\
\nabla\cdot u=0,\vspace{2mm}\\
u(x,0)=u_{0}(x),\quad w(x,0)=w_{0}(x).
\end{array}\right.
\end{equation}
Let $(u_{0},\,w_{0})\in H^{s}(\mathbb{R}^{3})$ with $s>\frac{5}{2}$ and $\nabla\cdot u_{0}=0$. If $\alpha$ and $\beta$ satisfy
\begin{align*}
\alpha\geq
\frac{5}{4},\quad \beta>0,\quad\alpha+\beta\geq \frac{7}{4},
\end{align*}
and $g$ further satisfies
\begin{equation}\label{lognver56}
\int_{e}^{\infty}\frac{d\tau}{\tau g^{4}(\tau)}=\infty,\end{equation}
then \eqref{loggnDMP} admits a unique global solution $(u,\,w)$ such that, for any given $T>0$,
$$(u,\,w)\in L^{\infty}([0, T]; H^{s}(\mathbb{R}^{3})),\quad (\mathcal{L} u,\, \Lambda^{\beta}w)\in L^{2}([0, T]; H^{s}(\mathbb{R}^{3})).
$$
\end{thm}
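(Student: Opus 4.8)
\emph{Proof strategy.} The approach is the standard one for global well-posedness of such systems, implemented in the logarithmically supercritical framework of Tao~\cite{ttTao} (see also \cite{ye2017good} for the Boussinesq analogue): one carries out the a priori estimates behind Theorem~\ref{Th1}, keeping careful track of the loss produced by replacing $(-\Delta)^{\alpha}u$ by the weaker operator $\mathcal L^{2}u$, and then closes the top-order estimate by a nonlinear Gronwall (Osgood/Bihari) argument in place of the usual Gronwall inequality, condition \eqref{lognver56} being exactly what makes this argument run. Note that for $g\equiv1$ one has $\mathcal L^{2}=(-\Delta)^{\alpha}$ and \eqref{lognver56} holds automatically, so Theorem~\ref{addTh3} contains the $\beta>0$ part of Theorem~\ref{Th1}. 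By Friedrichs mollification and a routine fixed-point/energy argument, \eqref{loggnDMP} admits a unique solution on a maximal interval $[0,T^{*})$ with $(u,w)\in C([0,T^{*});H^{s})$, continuable past $T^{*}$ as long as $\int_{0}^{T^{*}}\|\nabla u\|_{L^{\infty}}\,dt<\infty$ (the corresponding quantity for $w$ is slaved to this one, since the $w$-equation is a transport--diffusion equation carrying the first-order dissipation coming from $\nabla\nabla\cdot w$ on its curl-free part together with $(-\Delta)^{\beta}w$ and the damping $2w$). Uniqueness follows from an $L^{2}$ energy estimate on the difference of two solutions. Hence it suffices to prove, for every fixed $T>0$, the a priori bound $\sup_{[0,T]}\|(u,w)\|_{H^{s}}\le C(T,u_{0},w_{0})$.

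\emph{Stage one: the Theorem~\ref{Th1}-type estimates.} The basic $L^{2}$ energy identity, using $\nabla\cdot u=0$, the identity $\int_{\mathbb R^{3}}(\nabla\times w)\cdot u\,dx+\int_{\mathbb R^{3}}(\nabla\times u)\cdot w\,dx=2\int_{\mathbb R^{3}}(\nabla\times u)\cdot w\,dx$, the damping term, and $\alpha\ge\frac54>1$ (so that $\|\nabla u\|_{L^{2}}$ is controlled by $\|\mathcal L u\|_{L^{2}}$ up to low frequencies), yields
$$\sup_{[0,T]}\big(\|u\|_{L^{2}}^{2}+\|w\|_{L^{2}}^{2}\big)+\int_{0}^{T}\big(\|\mathcal L u\|_{L^{2}}^{2}+\|w\|_{L^{2}}^{2}+\|\Lambda^{\beta}w\|_{L^{2}}^{2}+\|\nabla\cdot w\|_{L^{2}}^{2}\big)\,dt\le C.$$
Then, exactly as in the proof of Theorem~\ref{Th1}, one propagates an intermediate homogeneous regularity $\dot H^{\sigma}$ with $\sigma+\alpha$ just above $\frac52$ (available because $\alpha\ge\frac54$): applying $\Lambda^{\sigma}$, pairing with $(u,w)$, and using only the $L^{2}$ energy and the dissipations (in particular \emph{without} invoking $\|\nabla u\|_{L^{\infty}}$), one obtains
$$\sup_{[0,T]}\big(\|\Lambda^{\sigma}u\|_{L^{2}}^{2}+\|\Lambda^{\sigma}w\|_{L^{2}}^{2}\big)+\int_{0}^{T}\big(\|\mathcal L\Lambda^{\sigma}u\|_{L^{2}}^{2}+\|\Lambda^{\sigma+\beta}w\|_{L^{2}}^{2}\big)\,dt\le C.$$
Two features of this step are essential for us. The coupling term $\Lambda^{\sigma}(\nabla\times w)$ in the velocity equation is integrated by parts onto $u$ and absorbed by the velocity dissipation; because $\mathcal L$ is weaker than $(-\Delta)^{\alpha/2}$, this absorption spends a factor of $g$. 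The transport commutator $[\Lambda^{\sigma},u\cdot\nabla]w$ in the microrotation equation is the delicate term — for small $\beta$ there is no $L^{q}$ bound for $w$ with $q>2$, and the two-dimensional devices fail — and is handled by a Kato--Ponce/paraproduct decomposition that distributes derivatives and integrability so that the $w$-factors interpolate into the $(-\Delta)^{\beta}w$ dissipation while the accompanying velocity factor is absorbed into $\|\mathcal L\Lambda^{\sigma}u\|_{L^{2}}$; this is exactly where the hypothesis $\alpha+\beta\ge\frac74$ is used, and where a second factor of $g$ is spent.

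\emph{Stage two: the $H^{s}$ estimate and the Osgood closure.} Applying $\Lambda^{s}$, pairing with $(u,w)$ and using the Kato--Ponce commutator estimates gives
$$\frac{d}{dt}\big(\|u\|_{H^{s}}^{2}+\|w\|_{H^{s}}^{2}\big)+(\text{dissipation})\ \lesssim\ \big(\|\nabla u\|_{L^{\infty}}+\text{controlled terms}\big)\big(\|u\|_{H^{s}}^{2}+\|w\|_{H^{s}}^{2}\big),$$
so everything reduces to estimating $\|\nabla u\|_{L^{\infty}}$. Since only $\|\mathcal L\Lambda^{\sigma}u\|_{L^{2}}$ is available in $L^{2}_{t}$ (not $\|\Lambda^{\sigma+\alpha}u\|_{L^{2}}$), I would split $\|\nabla u\|_{L^{\infty}}$ via a Littlewood--Paley/Bernstein cutoff at frequency $N$: the low part is controlled by $g(N)$ times $\|\mathcal L\Lambda^{\sigma}u\|_{L^{2}}$ (any residual logarithmic factor being controlled using the extra room afforded by the angular dissipation $(-\Delta)^{\beta}w$ — which is why the hypothesis $\beta>0$ reappears here), and the high part by $N^{\frac52-s}\|u\|_{H^{s}}$; optimizing $N$ as a fixed power of $\|u\|_{H^{s}}$ and reinserting produces a differential inequality of the form
$$\frac{d}{dt}X(t)\ \le\ C\,h(t)\,X(t)\,g^{4}(X(t)),\qquad X(t):=e+\|u(t)\|_{H^{s}}^{2}+\|w(t)\|_{H^{s}}^{2},\qquad\int_{0}^{T}h(t)\,dt<\infty,$$
where the argument of $g$ is in fact a fixed positive power of $X(t)$ (harmless, since a change of variables shows this does not alter \eqref{lognver56} as the relevant hypothesis), and where the fourth power records the two velocity-dissipation losses isolated in Stage one; this contrasts with the $\beta=0$ case, in which the absence of the angular dissipation forces instead the pair ``one $\sqrt{\ln}$ factor together with a single $g^{2}$'', whence the different condition \eqref{logcobd} of Theorem~\ref{addTh2}. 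Since $g$ is non-decreasing with $g\ge1$, the Osgood/Bihari lemma applies with modulus $\omega(r)=r\,g^{4}(r)$, for which $\int^{\infty}\frac{dr}{\omega(r)}=\int^{\infty}\frac{dr}{r\,g^{4}(r)}=\infty$ is precisely \eqref{lognver56}; hence $X$ cannot blow up in finite time, which yields the desired a priori bound and the global solution with the stated regularity.

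\emph{The main obstacle.} As in Theorem~\ref{Th1}, the genuinely hard point is the microrotation equation: obtaining any usable high-order estimate for $w$ when $\beta$ is small, which requires exploiting the damping $2w$, the fractional dissipation $(-\Delta)^{\beta}w$, and the one-order gain on the curl-free part of $w$ coming from $\nabla\nabla\cdot w$ \emph{all at once} against the transport term, and this is what pins down $\alpha+\beta\ge\frac74$. On top of this, the only new ingredient for Theorem~\ref{addTh3} is the careful bookkeeping of the accumulated $g$-loss, which must be shown to amount to exactly a fourth power so that \eqref{lognver56} is the sharp closing hypothesis; the other changes relative to Theorem~\ref{Th1} are routine.
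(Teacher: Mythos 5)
Your high-level intuition is sound --- the replacement of $(-\Delta)^{\alpha}$ by $\mathcal L^{2}$ costs logarithmic factors that accumulate to $g^{4}$, and \eqref{lognver56} is exactly the Osgood/Bihari condition that closes a differential inequality of the form $\dot X \lesssim h(t)\,X\,g^{4}(X)$ with $h\in L^{1}_{t}$ --- but the two-stage architecture you propose has a genuine structural gap: your Stage one does not close.

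You assert that the intermediate estimate is obtained ``exactly as in the proof of Theorem~\ref{Th1}'', i.e.\ by a Gronwall argument. That is what happens in Theorem~\ref{Th1} (Lemmas~\ref{addt01}, \ref{AZL302}), because there the coefficient multiplying the intermediate norm --- $\|u\|_{H^{\alpha}}^{2}$, $\|u\|_{H^{2\alpha+\beta-1}}^{2}$ --- is time-integrable from the preceding energy level. With $\mathcal L^{2}$ in place of $(-\Delta)^{\alpha}$, however, the dissipation in the intermediate estimate is only $\big\|\tfrac{\Lambda^{2\alpha+\beta-1}}{g^{2}(\Lambda)}u\big\|_{L^{2}}^{2}$, and when you absorb the nonlinear term $u\cdot\nabla u$ into it you are left with a factor such as $\|\Lambda^{\frac{3+2\beta}{4}}u\|_{L^{2}}^{2}$ which is \emph{not} time-integrable at the borderline $\alpha=\tfrac54$. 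Controlling it requires a high-low frequency split at cutoff $N$, whose low part carries a factor $g^{2}(2^{N})$, and $N$ must then be chosen as a power of the very quantity $A(t)=\big\|\tfrac{\Lambda^{\alpha+\beta-1}}{g(\Lambda)}u\big\|_{L^{2}}^{2}$ you are estimating. In other words, the intermediate estimate itself is an Osgood closure --- this is exactly Lemma~4.2 of the paper, which produces $\dot A+B\lesssim g^{4}(e+A)(e+A)\,(1+\|\mathcal L u\|_{L^{2}}^{2})+\phi$, and likewise Lemma~4.3 produces a second Osgood closure at the level $\|\Lambda^{7/4}u\|_{L^{2}}^{2}+\|\Lambda^{\varrho}w\|_{L^{2}}^{2}$. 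Both use the same hypothesis \eqref{lognver56}. Only after those two intermediate Osgood closures furnish $\int_{0}^{T}\|\nabla u\|_{L^{\infty}}\,dt<\infty$ and $\int_{0}^{T}\|w\|_{L^{\infty}}^{2}\,dt<\infty$ does the paper run the top-order $H^{s}$ estimate --- and at that point it is an ordinary Gronwall inequality, not Osgood.

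Your Stage two, which places the Osgood at $H^{s}$, does not repair this: the coefficient $h(t)$ in your proposed differential inequality involves $\|\mathcal L\Lambda^{\sigma}u\|_{L^{2}}$, and to put that in $L^{2}_{t}$ you need precisely the intermediate bounds you claimed came for free, so the argument is circular. Your bookkeeping of the $g^{4}$ is also opaque for the same reason: if the two $g$-losses are ``spent'' in Stage one and the clean boxed estimate is obtained there, they cannot reappear at $H^{s}$; in the paper the $g^{4}$ is visible because the factor $g^{2}(2^{N})$ enters the intermediate differential inequality and is then squared by Young's inequality. To fix your outline, move the Osgood closures down to the intermediate regularity levels (two of them, the second feeding on the first), and leave the $H^{s}$ estimate as a plain Gronwall once the Lipschitz bound on $u$ and the $L^{\infty}$ bound on $w$ are secured.
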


\vskip .1in
\begin{rem}\rm
The typical examples of $g$ satisfying \eqref{lognver56} are as follows
\begin{equation}
\begin{split}
& g(\xi)=\big[\ln(e+|\xi|)\big]^{\frac{1}{4}}; \\
& g(\xi)=\big[\ln(e+|\xi|) \ln (e+\ln(e+|\xi|)) \big]^{\frac{1}{4}};\\
& g(\xi)=\big[\ln(e+|\xi|)\ln (e+\ln(e+|\xi|)) \ln(e+\ln(e+\ln(e+|\xi|)))\big]^{\frac{1}{4}}.\nonumber
\end{split}
\end{equation}
\end{rem}

\begin{rem}\rm
	Theorem \ref{Th1}, Theorem \ref{addTh2} and  Theorem \ref{addTh3} are also true for other fluid equations with similar structure in the general $n$-dimension case when the powers $\alpha$ and $\beta$ satisfy
	$$\alpha\geq
	\frac{1}{2}+\frac{n}{4},\quad \beta\geq0,\quad\alpha+\beta\geq 1+\frac{n}{4}.$$
\end{rem}

\vskip .1in
We now briefly summarize the main challenges and outline the main idea in the proof of our results.
Since the existence and uniqueness of local smooth solutions in the functional setting $H^{s}(\mathbb{R}^{3})$ with $s>\frac{5}{2}$ can be derived via a standard approach, our efforts are devoted to obtaining global {\it a
priori} bounds for $u$ and $w$ in the Sobolev space $H^s$ with $s>\frac52$. For the sake of completeness, the local well-posedness part is presented in Appendix \ref{apset3}. The proof of Theorem \ref{Th1} starts with the basic $L^2$-estimate
\begin{eqnarray}\label{xcxfsd4}
\|u(t)\|^2_{L^2}+\|w(t)\|_{L^2}^2
+ \int_0^t(\|\Lambda^\alpha u(\tau)\|_{L^2}^2 +\|\Lambda^\beta w(\tau)\|_{L^2}^2) d\tau\leq
C(t,\,u_{0},\,w_{0})
\end{eqnarray}
and a regularity bound for $u$ based on $\alpha\geq
\frac{5}{4}$ and \eqref{xcxfsd4}
\begin{eqnarray}\label{kcvre}
\|\Lambda^{\alpha+\beta-1}u(t)\|_{L^{2}}^{2}+
\int_{0}^{t}{\|\Lambda^{2\alpha+\beta-1}u(\tau)\|_{L^{2}}^{2}\,d\tau}<\infty.
\end{eqnarray}
Unfortunately, \eqref{kcvre} is insufficient to complete the proof of Theorem \ref{Th1}. When $\beta<1$, it is not possible to directly improve (\ref{kcvre})
to the critical level
\begin{eqnarray}\label{kn1}
\int_{0}^{t}{\|\Lambda^{\frac{5}{2}}u(\tau)
	\|_{L^{2}}^{2}\,d\tau}<\infty
\end{eqnarray}
due to the presence of the term  $\nabla \times w$. In order to overcome this difficulty, we combine the estimates of $u$ and $w$ to derive the regularity estimate of $w$,
\begin{eqnarray}\label{kn2}
\|\Lambda^{\varrho}w(t)\|_{L^{2}}^{2}+ \int_{0}^{t}{\|\Lambda^{\varrho+\beta}w(\tau)\|_{L^{2}}^{2}\,d\tau}<\infty
\end{eqnarray}
for any $\varrho<1+\beta$ with $\beta>0$. The regularity bound for $w$ in (\ref{kn2}) is sufficient for (\ref{kn1}). In order to obtain \eqref{kn2}, we need a suitable bound on the following commutator
$$\int_{\mathbb{R}^{3}}
[\Lambda^{\rho}, u \cdot
\nabla] w\Lambda^{\rho}w\,dx.$$
This is exactly where the condition $\alpha+\beta\geq \frac{7}{4}$ is needed (see \eqref{sdsfew09} for details). To prepare for the proof of the global $H^s$-bound for $(u, w)$, we need a global bound on $\|w(t)\|_{L^q}$ with $2\le q\le \infty$,
which is not a consequence  of (\ref{kn2}) in the 3D case. Due to the presence of
$\nabla \nabla\cdot w$, it is not possible to obtain this bound from the
equation of $w$ directly. Instead we circumvent this difficulty by estimating
the norm $\|\Lambda^{\sigma}w(t)\|_{L^{2}}$. After working out the estimates on
several commutators, we find
that, for any $\sigma\leq \frac{3}{2}$,
\begin{eqnarray}\label{kn3}
\|\Lambda^{\sigma}w(t)\|_{L^{2}}^{2}+\int_{0}^{t}{\|\Lambda^{\sigma+\beta}w(\tau)\|_{L^{2}}^{2}\,d\tau}<\infty.
\end{eqnarray}
Once \eqref{kn1} and \eqref{kn3} are at our disposal, the global
regularity of $u$ and $w$ can then be obtained.

We now explain the main idea in the proof of Theorem \ref{addTh2}. The proof is different from that for Theorem \ref{Th1}. One reason is that there is no dissipation in the $w$-equation. Another reason is that the dissipation is given by a
general Fourier multiplier operator here. The key idea here is still to successively
establish more and more regular bounds. The starting step is the basic energy estimate
\begin{eqnarray}
\|u(t)\|^2_{L^2}+\|w(t)\|_{L^2}^2
   + \int_{0}^{t}{(\|\Lambda^{r}u(\tau)\|_{L^{2}}^{2}+
\|\mathcal{L}u(\tau)\|_{L^{2}}^{2})\,d\tau} \leq
C(t,\,u_{0},\,w_{0})\nonumber
\end{eqnarray}
for any $r\in [0,\,\frac{7}{4})$. This bound allows us to establish an estimate
of the form
\begin{eqnarray}
\Big\|\frac{\Lambda^{\frac{3}{4}}}{g(\Lambda)}u(t)\Big\|_{L^{2}}^{2} +\int_{0}^{t}{
\Big\|\frac{\Lambda^{\frac{5}{2}}}{g^{2}(\Lambda)}u(\tau)\Big\|_{L^{2}}^{2}\,d\tau}\leq C(t,\,u_{0},\,w_{0}),\nonumber
\end{eqnarray}
where $\frac{\Lambda^{\gamma}}{g^{k}(\Lambda)}$ denotes the Fourier multiplier operator given by the symbol $\frac{|\xi|^{\gamma}}{g^{k}(\xi)}$ for $\gamma>0,k>0$.
The next step aims at the estimate, for any $\widetilde{\sigma}\in (0,\frac{5}{2})$ and $k_{1}<\widetilde{\sigma}+\frac{3}{4}$,
\begin{eqnarray}
\|\Lambda^{k_{1}}u(t)\|_{L^{2}}^{2}
+\|\Lambda^{\widetilde{\sigma}}w(t)\|_{L^{2}}^{2}
+\int_{0}^{t}{
 \|\Lambda^{k_{1}+\frac{7}{4}-\epsilon_{2}}u
(\tau)\|_{L^{2}}^{2} \,d\tau}\leq
C(t,\,u_{0},\,w_{0}).\nonumber
\end{eqnarray}
The proof of this estimate is highly technical. We use the high-low frequency splitting techniques and several logarithmic interpolation inequalities. Once this is obtained, we are then able to bound the global $H^s$-norm of $(u, w)$. The proof of Theorem \ref{addTh3} combines the techniques from the proofs of Theorem \ref{Th1} and of Theorem \ref{addTh2}.

The rest of this paper is organized as follows.  Section \ref{sec2} is devoted to the proof of Theorem \ref{Th1}. Section \ref{sec3} proves Theorem \ref{addTh2} while
Section \ref{sec4} proves Theorem \ref{addTh3}. The Appendix \ref{apset1} provides the definitions of the Littlewood-Paley decomposition and Besov spaces,  and related tools. The Appendix \ref{apset2} provides a global regularity result for a micropolar system
without $\na\na\cdot w$ in the equation of $w$, namely (\ref{3DMP}) with $\mu=0$. The purpose of this appendix is to show that, without this bad term, the requirement for $\alpha$ and $\beta$ can be reduced to $\alpha\ge \frac54$ and $\beta=0$. The Appendix \ref{apset3} provides the local well-posedness result.

\vskip .3in
\section{The proof of Theorem \ref{Th1}}
\label{sec2}
\setcounter{equation}{0}

This section is devoted to the proof of Theorem \ref{Th1}. Generic constant will be denoted by $C$. Sometimes we write  $C(\gamma_{1},\gamma_{2},\cdot\cdot\cdot,\gamma_{k})$ to emphasize the dependence of $C$ on the quantities $\gamma_{1},\gamma_{2},\cdot\cdot\cdot,\gamma_{k}$. We write $A\thickapprox B$ if there exist two positive constants $C_{1}\leq C_{2}$ such that $C_{1}B\leq A\leq C_{2}B$.

\vskip .1in
The proof focuses on the case $\alpha+\beta = \frac{7}{4}$ as the case $\alpha+\beta>\frac{7}{4}$ is simpler and can
be done with a slight modification. The global {\it a priori} bounds start with the
following basic energy estimate.

\begin{lemma}
\label{L301}
Assume $(u_{0},w_{0})$ satisfies the assumptions stated in Theorem \ref{Th1}.
If $\alpha+\beta>1$, then the corresponding solution $(u, w)$
of \eqref{3DMP} admits the following bound for any $t>0$
\begin{eqnarray}\label{t301}
\|u(t)\|^2_{L^2}+\|w(t)\|_{L^2}^2
   + \int_0^t(\|\Lambda^\alpha u(\tau)\|_{L^2}^2 +\|\Lambda^\beta w(\tau)\|_{L^2}^2) d\tau\leq
C(t,\,u_{0},\,w_{0}).
\end{eqnarray}
\end{lemma}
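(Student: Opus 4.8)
The plan is to derive the energy estimate \eqref{t301} via the standard $L^2$ energy method applied simultaneously to the $u$- and $w$-equations, exploiting the antisymmetric structure of the coupling terms $2\kappa\nabla\times w$ and $2\kappa\nabla\times u$. First I would take the $L^2$ inner product of $\eqref{3DMP}_1$ with $u$ and of $\eqref{3DMP}_2$ with $w$, then add the two resulting identities. The transport terms $\int_{\mathbb{R}^3}(u\cdot\nabla)u\cdot u\,dx$ and $\int_{\mathbb{R}^3}(u\cdot\nabla)w\cdot w\,dx$ vanish after integration by parts because $\nabla\cdot u=0$. The dissipation terms yield $(\nu+\kappa)\|\Lambda^{\alpha}u\|_{L^2}^2$ and $\gamma\|\Lambda^{\beta}w\|_{L^2}^2$, and the damping term gives $4\kappa\|w\|_{L^2}^2\geq 0$, which we simply discard (or keep on the good side). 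The term $\mu\int_{\mathbb{R}^3}\nabla\nabla\cdot w\cdot w\,dx=-\mu\|\nabla\cdot w\|_{L^2}^2\leq 0$ is also favorable and can be dropped.

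The coupling terms require care but are ultimately harmless: $\int_{\mathbb{R}^3}(\nabla\times w)\cdot u\,dx=\int_{\mathbb{R}^3}(\nabla\times u)\cdot w\,dx$ by integration by parts (the curl is formally self-adjoint), so with the matched coefficients $2\kappa$ these two contributions combine into $4\kappa\int_{\mathbb{R}^3}(\nabla\times u)\cdot w\,dx$. This term is then bounded by $4\kappa\|\nabla\times u\|_{L^2}\|w\|_{L^2}\le 4\kappa\|\Lambda u\|_{L^2}\|w\|_{L^2}$; since $\alpha+\beta>1$ and (say) $\alpha\geq\beta$ forces $\alpha>\frac12$, but more carefully, since $\alpha+\beta>1$ we have $\alpha\ge 1$ is not guaranteed, so instead I would use the interpolation $\|\Lambda u\|_{L^2}\le \|u\|_{L^2}^{1-1/\alpha}\|\Lambda^{\alpha}u\|_{L^2}^{1/\alpha}$ (valid when $\alpha\ge 1$) or, when $\alpha<1$, absorb $\|\nabla\times u\|_{L^2}$ differently; alternatively one can symmetrize by estimating $4\kappa\int(\nabla\times u)\cdot w\,dx\le \epsilon\|\Lambda^{\alpha}u\|_{L^2}^2+\epsilon\|\Lambda^{\beta}w\|_{L^2}^2+C_\epsilon(\|u\|_{L^2}^2+\|w\|_{L^2}^2)$ using $\|\nabla\times u\|_{L^2}\lesssim\|u\|_{L^2}+\|\Lambda^{\alpha}u\|_{L^2}$ when $\alpha\ge 1$, and when $\alpha<1$ note that the term can be written $\le\|w\|_{L^2}\|\Lambda^{1-\beta}(\nabla\times u)\|_{\dot H^{-(1-\beta)}}$-type bound — more simply, write $\int(\nabla\times u)\cdot w = \int(\Lambda^{\alpha}u)\cdot\Lambda^{1-\alpha}(\nabla\times w)/\Lambda^{\cdots}$; cleanest is to use that $1<\alpha+\beta$ lets us split one derivative as $\nabla\times u$ paired with $w$ via $\|\Lambda u\|_{L^2}\|w\|_{L^2}$ and Young's inequality after interpolating $\Lambda$ between $\Lambda^{\alpha}$ (on $u$) and $\Lambda^{\beta}$ (on $w$): indeed $\int(\nabla\times u)\cdot w\,dx\lesssim\|\Lambda^{a}u\|_{L^2}\|\Lambda^{b}w\|_{L^2}$ whenever $a+b=1$, so choosing $a=\alpha\cdot\frac{1}{\alpha+\beta}+\cdots$; concretely pick $a\le\alpha$, $b\le\beta$ with $a+b=1$, possible precisely because $\alpha+\beta\ge 1$, then interpolate each factor with $\|u\|_{L^2},\|w\|_{L^2}$ and apply Young.

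Putting these together produces a differential inequality of the form
\begin{equation}
\frac{d}{dt}\big(\|u\|_{L^2}^2+\|w\|_{L^2}^2\big)+c\big(\|\Lambda^{\alpha}u\|_{L^2}^2+\|\Lambda^{\beta}w\|_{L^2}^2\big)\le C\big(\|u\|_{L^2}^2+\|w\|_{L^2}^2\big),
\end{equation}
for some $c>0$. Grönwall's inequality then yields $\|u(t)\|_{L^2}^2+\|w(t)\|_{L^2}^2\le e^{Ct}(\|u_0\|_{L^2}^2+\|w_0\|_{L^2}^2)$, and integrating the differential inequality over $[0,t]$ gives the stated control of $\int_0^t(\|\Lambda^{\alpha}u\|_{L^2}^2+\|\Lambda^{\beta}w\|_{L^2}^2)\,d\tau$ by $C(t,u_0,w_0)$. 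All manipulations are rigorous on the smooth local solution whose existence is taken for granted. The only mildly delicate point — the \emph{main obstacle} if any — is handling the cross term $\int(\nabla\times u)\cdot w\,dx$ when $\alpha$ is close to $\frac12$ (so $\beta$ close to $\frac12$ as well), where one must carefully split the single derivative between the two dissipation exponents using $\alpha+\beta\ge 1$; if instead $\alpha\ge 1$ (which holds in all the theorems of interest, since there $\alpha\ge\frac54$), this term is trivially absorbed and the proof is completely routine.
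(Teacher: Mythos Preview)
Your proposal is correct and, after the exploratory detours, lands on exactly the argument the paper uses: bound the cross term as $\int_{\mathbb{R}^3}(\nabla\times u)\cdot w\,dx \lesssim \|\Lambda^{a}u\|_{L^2}\|\Lambda^{1-a}w\|_{L^2}$ with $1-\beta<a<\alpha$, interpolate each factor between $L^2$ and the dissipation norm, absorb via Young, and close with Gronwall. The only slip is that you wrote ``possible precisely because $\alpha+\beta\ge 1$'' --- you need the strict inequality $\alpha+\beta>1$ (as in the hypothesis) so that $a<\alpha$ and $1-a<\beta$ can both hold, which is what makes the absorption work.
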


\begin{proof}
Multiplying the first two equations in \eqref{3DMP} by $(u, w)$ and integrating by parts, we have
\begin{align*}
 &  \frac{1}{2}\frac{d}{dt}
  ( \|u(t)\|^2_{L^2}+\|w(t)\|_{L^2}^2)
  +  \| \Lambda^\alpha u\|_{L^2}^2+  \|\Lambda^\beta w\|_{L^2}^2+2\|w\|^2_{L^2}+\|\nabla\cdot w\|^2_{L^2}\nonumber\\
  &=\int_{\mathbb{R}^{3}} \Big( (\nabla \times
          w)\cdot u+
      (\nabla \times u)\cdot w  \Big) dx\\
   &\leq 2 \|\Lambda^{\kappa}u\|_{L^2}\|\Lambda^{1-\kappa}w\|_{L^2}
 \\
   &\leq 2 (\|u\|_{L^2}^{1-\frac{\kappa}{\alpha}}\|\Lambda^{\alpha}u\|_{L^2}
 ^{\frac{\kappa}{\alpha}})(\|w\|_{L^2}^{1-\frac{1-\kappa}{\beta}}
 \|\Lambda^{\beta}w\|_{L^2}
 ^{\frac{1-\kappa}{\beta}})
 \\
 &\leq  \frac{1}{2}\|\Lambda^\alpha u \|_{L^2}^2+\frac{1}{2}\|\Lambda^{\beta}w\|_{L^2}
 ^{2}
 +C(\|u\|^2_{L^2}+\|  w \|^2_{L^2}),
\end{align*}
where $1-\beta<\kappa< \alpha$ (such $\kappa$ is possible due to the condition $\alpha+\beta>1$). Here we have also used the equality, due to $\nabla\cdot u=0$,
$$
\int_{\mathbb{R}^{3}} ( u\cdot\nabla ) u \cdot u \ dx=0 \ \ \  \mbox{and} \
\  \int_{\mathbb{R}^{3}}(
u\cdot \nabla)   w\cdot w \ dx=0,
$$
and the following interpolation inequality
$$\|f\|_{\dot{H}^{s_{0}}}\leq C\|f\|_{\dot{H}^{s_{1}}}^{1-\theta}\|f\|_{\dot{H}^{s_{2}}}^{\theta},$$
where $s_{0}=(1-\theta)s_{1}+\theta s_{2}$ with $0\leq\theta\leq1$. By the Gronwall inequality, one has
\begin{align*}
  \|u(t)\|^2_{L^2}+\|w(t)\|_{L^2}^2
   + \int_0^t(\|\Lambda^\alpha u(\tau)\|_{L^2}^2 +\|\Lambda^\beta w(\tau)\|_{L^2}^2) d\tau \leq C(t,\,u_{0},\,w_{0}).
\end{align*}
This proves the lemma.
\end{proof}

\vskip .1in
Next we proceed with the following estimate in the Sobolev spaces.

\begin{lemma}\label{addt01}
Assume $(u_{0},w_{0})$ satisfies the assumptions stated in Theorem \ref{Th1}.
If $\alpha\geq\frac{5}{4}$ and $\beta\geq0$, then the corresponding solution $(u, w)$ of \eqref{3DMP} admits the following bound for any $t>0$
\begin{eqnarray}\label{addtd201}
\|\Lambda^{\alpha+\beta-1}u(t)\|_{L^{2}}^{2}
+\int_{0}^{t}{\|\Lambda^{2\alpha+\beta-1}u(\tau)\|_{L^{2}}^{2}\,d\tau}\leq
C(t,\,u_{0},\,w_{0}).
\end{eqnarray}
\end{lemma}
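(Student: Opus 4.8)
The plan is to run a single $\dot H^{\sigma}$-type energy estimate on the velocity equation at the level $\sigma:=\alpha+\beta-1$, and to close it by Gronwall's inequality using only the quantities already controlled by Lemma \ref{L301}; I focus on the case $\alpha+\beta=\frac74$ (so $\sigma=\frac34$ and $\beta\le\frac12$), the surplus in $\alpha+\beta>\frac74$ changing nothing. Applying $\Lambda^{\sigma}$ to the velocity equation in \eqref{3DMP}, taking the $L^2$-inner product with $\Lambda^{\sigma}u$, and using $\nabla\cdot u=0$ to turn the transport term into a commutator (the uncommuted piece $\int_{\mathbb{R}^3}(u\cdot\nabla\Lambda^{\sigma}u)\cdot\Lambda^{\sigma}u\,dx$ vanishes), one obtains
\begin{align*}
\frac12\frac{d}{dt}\|\Lambda^{\sigma}u\|_{L^2}^2+(\nu+\kappa)\|\Lambda^{2\alpha+\beta-1}u\|_{L^2}^2
&=-\int_{\mathbb{R}^3}[\Lambda^{\sigma},u\cdot\nabla]u\cdot\Lambda^{\sigma}u\,dx
+2\kappa\int_{\mathbb{R}^3}\Lambda^{\sigma}(\nabla\times w)\cdot\Lambda^{\sigma}u\,dx,
\end{align*}
since $\sigma+\alpha=2\alpha+\beta-1$. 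It then suffices to dominate the two right-hand terms by $\frac12(\nu+\kappa)\|\Lambda^{2\alpha+\beta-1}u\|_{L^2}^2+g_1(t)+g_2(t)\|\Lambda^{\sigma}u\|_{L^2}^2$ with $g_1,g_2\in L^1_{\rm loc}[0,\infty)$; Gronwall's inequality on $[0,t]$ and a subsequent time integration then give \eqref{addtd201}. (This is an a priori estimate, run on the life-span of the local solution of Appendix \ref{apset3}; the resulting uniform bound precludes breakdown of $\|\Lambda^{\sigma}u\|_{L^2}$.)

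For the coupling term I would move $\alpha$ derivatives off $w$ and onto $u$ by Plancherel,
\[
2\kappa\int_{\mathbb{R}^3}\Lambda^{\sigma}(\nabla\times w)\cdot\Lambda^{\sigma}u\,dx
=2\kappa\int_{\mathbb{R}^3}\Lambda^{\beta-1}(\nabla\times w)\cdot\Lambda^{2\alpha+\beta-1}u\,dx,
\]
and observe that the Fourier multiplier $\Lambda^{\beta-1}(\nabla\times\cdot)$ has symbol bounded by $C|\xi|^{\beta}$, so $\|\Lambda^{\beta-1}(\nabla\times w)\|_{L^2}\le C\|\Lambda^{\beta}w\|_{L^2}$ (with $\|\Lambda^{0}w\|_{L^2}=\|w\|_{L^2}$ when $\beta=0$). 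Hence this term is at most $\frac14(\nu+\kappa)\|\Lambda^{2\alpha+\beta-1}u\|_{L^2}^2+C\|\Lambda^{\beta}w\|_{L^2}^2$, and $\|\Lambda^{\beta}w\|_{L^2}^2\in L^1(0,t)$ by \eqref{t301}. This is exactly where the choice $\sigma=\alpha+\beta-1$ pays off: it leaves on $w$ precisely the norm controlled by the basic energy estimate.

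The main point is the commutator term, a Navier--Stokes-type estimate at the $\dot H^{\sigma}$-level. Using a Kato--Ponce commutator estimate and the Sobolev embeddings $\|f\|_{L^p}\le C\|\Lambda^{\frac32-\frac3p}f\|_{L^2}$ ($2\le p<\infty$), one is led to
\[
\Big|\int_{\mathbb{R}^3}[\Lambda^{\sigma},u\cdot\nabla]u\cdot\Lambda^{\sigma}u\,dx\Big|
\le C\,\|\Lambda^{\beta+\frac32}u\|_{L^2}\,\|\Lambda^{\alpha}u\|_{L^2}\,\|\Lambda^{\sigma}u\|_{L^2}.
\]
When $\alpha=\frac54$ one has $\beta+\frac32=2\alpha+\beta-1$, so the first factor is already the dissipation norm, Young's inequality finishes, and the coefficient it leaves on $\|\Lambda^{\sigma}u\|_{L^2}^2$ is $C\|\Lambda^{\alpha}u\|_{L^2}^2$. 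For $\alpha>\frac54$ one first interpolates $\|\Lambda^{\beta+\frac32}u\|_{L^2}\le C\|\Lambda^{\sigma}u\|_{L^2}^{1-\mu}\|\Lambda^{2\alpha+\beta-1}u\|_{L^2}^{\mu}$ with $\mu:=\frac{5/2-\alpha}{\alpha}$, and then Young's inequality absorbs the $\|\Lambda^{2\alpha+\beta-1}u\|_{L^2}$-power into the dissipation, leaving $C\|\Lambda^{\alpha}u\|_{L^2}^{2/(2-\mu)}\|\Lambda^{\sigma}u\|_{L^2}^2$. The condition $\alpha\ge\frac54$ enters here decisively: it is equivalent to $\mu\le1$, which is what makes the interpolation admissible and, at the same time, keeps the power $\frac{2}{2-\mu}$ of $\|\Lambda^{\alpha}u\|_{L^2}$ at most $2$; combined with $\|\Lambda^{\alpha}u\|_{L^2}^2\in L^1(0,t)$ from \eqref{t301} and H\"older's inequality on the finite interval $[0,t]$, the resulting coefficient of $\|\Lambda^{\sigma}u\|_{L^2}^2$ is integrable in time. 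This balancing of the derivatives — surplus onto the dissipation, remainder kept in $L^1_t$, which works precisely because $\frac54$ is the critical exponent for the $3$D hyperdissipative Navier--Stokes equations — is the step I expect to be the main obstacle. Collecting the three estimates and applying Gronwall's inequality yields \eqref{addtd201}; the case $\alpha+\beta>\frac74$ needs only cosmetic changes.
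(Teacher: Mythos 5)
Your proposal is correct and follows essentially the same route as the paper: a $\dot H^{\alpha+\beta-1}$ energy estimate in which the coupling term is handled by shifting all $\alpha$ derivatives onto $u$ (leaving exactly $\|\Lambda^{\beta}w\|_{L^2}$, which \eqref{t301} controls), and the commutator is handled by Kato--Ponce plus an interpolation between $\dot H^{\alpha+\beta-1}$ and the dissipation norm $\dot H^{2\alpha+\beta-1}$, with $\|u\|_{H^{\alpha}}^2\in L^1_t$ serving as the Gronwall weight. Your explicit interpolation exponent $\mu=\frac{5/2-\alpha}{\alpha}$ is just a reparametrization of the paper's choice of the Lebesgue index $p_0$ with $\frac{5-2\alpha}{6}\leq\frac{1}{p_0}\leq\frac{\alpha}{3}$, and the admissibility condition $\mu\le 1$ is the same place where $\alpha\ge\frac54$ enters.
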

\begin{proof}
Applying $\Lambda^{\alpha+\beta-1}$ to $\eqref{3DMP}_{1}$
and taking the $L^{2}$ inner product with $\Lambda^{\alpha+\beta-1}u$, we obtain
\begin{align}
 &\frac{1}{2}\frac{d}{dt}\|\Lambda^{\alpha+\beta-1}u(t)\|_{L^{2}}^{2}
+\|\Lambda^{2\alpha+\beta-1}
u\|_{L^{2}}^{2}\nonumber\\
&=  -\int_{\mathbb{R}^{3}}{[\Lambda^{\alpha+\beta-1},
u\cdot\nabla]u\cdot \Lambda^{\alpha+\beta-1}u\,dx}+\int_{\mathbb{R}^{3}}
{\Lambda^{\alpha+\beta-1}(\nabla\times w)
\Lambda^{\alpha+\beta-1}u\,dx}\nonumber\\
&:=  M_{1}+M_{2},\nonumber
\end{align}
where $[\Lambda^{s}, f]g:=\Lambda^{s}(fg)-f(\Lambda^{s}g)$ denotes the standard commutator. We recall the Kato-Ponce inequality (see, e.g.,  \cite{KPPonce})
\begin{eqnarray*}
 \|[\Lambda^s,f ]g \|_{L^r}
  \leq C\left(\|\nabla f \|_{L^{p_1}}\|\Lambda^{s-1} g \|_{L^{q_1}}
        +     \|\Lambda^{s} f \|_{L^{p_2}} \|  g \|_{L^{q_2}}\right),
\end{eqnarray*}
where $s>0$, $1<r<\infty$ and $\frac{1}{r}=\frac{1}{p_1}+\frac{1}{q_1}=\frac{1}{p_2}+\frac{1}{q_2}$ with $q_1, p_2\in(1,\infty)$ and $p_1, q_2\in[1,\infty]$.
We will make use of the following variant of the Kato-Ponce inequality,
$$\|[\Lambda^{s-1}\partial_{x_{i}},f ]g \|_{L^r}
  \leq C\left(\|\nabla f \|_{L^{p_1}}\|\Lambda^{s-1} g \|_{L^{q_1}}
        +     \|\Lambda^{s} f \|_{L^{p_2}} \|  g \|_{L^{q_2}}\right),\quad i=1,\,2,\,3.$$
As a consequence, we have
\begin{align}
M_{1}
 &\leq  C\|[\Lambda^{\alpha+\beta-1},
u\cdot\nabla]u\|_{L^{2}}\|\Lambda^{\alpha+\beta-1}u\|_{L^{2}}
 \nonumber\\
 &\leq C
 \Big(\|\nabla u\|_{L^{p_{0}}}\|\Lambda^{\alpha+\beta-2}
 \nabla u\|_{L^{\frac{2p_{0}}{p_{0}-2}}}+\|\nabla u\|_{L^{p_{0}}}
 \|\Lambda^{\alpha+\beta-1}
 u\|_{L^{\frac{2p_{0}}{p_{0}-2}}}\Big)\|\Lambda^{\alpha+\beta-1}u\|_{L^{2}}
 \nonumber\\
 &\leq C \|\nabla u\|_{L^{p_{0}}}
 \|\Lambda^{\alpha+\beta-1}
 u\|_{L^{\frac{2p_{0}}{p_{0}-2}}}\|\Lambda^{\alpha+\beta-1}u\|_{L^{2}}
  \nonumber\\
 &\leq C \|u\|_{H^{\alpha}}
 \|\Lambda^{\alpha+\beta-1}u\|_{L^{2}}^{1-\frac{3}{\alpha p_{0}}}\|\Lambda^{2\alpha+\beta-1}
 u\|_{L^{2}}^{\frac{3}{\alpha p_{0}}}
 \|\Lambda^{\alpha+\beta-1}u\|_{L^{2}}
\nonumber\\
 &\leq
 \frac{1}{4}\|\Lambda^{2\alpha+\beta-1}u\|_{L^{2}}^{2}+
 C(1+\|u\|_{H^{\alpha}}^{2})
 \|\Lambda^{\alpha+\beta-1}u\|_{L^{2}}^{2},\nonumber
\end{align}
where, due to  $\alpha\geq
\frac{5}{4}$, we have selected $p_{0}>2$ satisfying
$$\frac{5-2\alpha}{6}\leq\frac{1}{p_{0}}\leq \frac{\alpha}{3}.$$
The term $M_{2}$ can be bounded by
\begin{eqnarray}
M_{2}\leq C\|\Lambda^{\beta}w\|_{L^{2}}\|\Lambda^{2\alpha+\beta-1}u\|_{L^{2}}
 \leq \frac{1}{4}\|\Lambda^{2\alpha+\beta-1}u\|_{L^{2}}^{2}
 +C\|\Lambda^{\beta}w\|_{L^{2}}^{2}.\nonumber
\end{eqnarray}
Collecting the estimates above yields
$$\frac{d}{dt}\|\Lambda^{\alpha+\beta-1}u(t)\|_{L^{2}}^{2}
+\|\Lambda^{2\alpha+\beta-1}
u\|_{L^{2}}^{2}\leq C(1+\|u\|_{H^{\alpha}}^{2})
 \|\Lambda^{\alpha+\beta-1}u\|_{L^{2}}^{2}+C\|w\|_{H^{\beta}}^{2}.$$
Thanks to the estimate \eqref{t301}, we have
\begin{eqnarray}\label{sdsfew16}
 \int_0^t(\| u(\tau)\|_{H^{\alpha}}^2+\|w(\tau)\|_{H^{\beta}}^{2})d\tau\leq
C(t,\,u_{0},\,w_{0}).
\end{eqnarray}
By the Gronwall inequality,
$$\|\Lambda^{\alpha+\beta-1}u(t)\|_{L^{2}}^{2}
+\int_{0}^{t}{\|\Lambda^{2\alpha+\beta-1}u(\tau)\|_{L^{2}}^{2}\,d\tau}\leq
C(t,\,u_{0},\,w_{0}).$$
Therefore, we complete the proof of Lemma \ref{addt01}.
\end{proof}

We are now in the position to derive the following key estimates.
\begin{lemma}\label{AZL302}
Assume $(u_{0},w_{0})$ satisfies the assumptions stated in Theorem \ref{Th1}.
If $\alpha\geq\frac{5}{4}$, $\beta>0$ and $\alpha+\beta\geq\frac{7}{4}$, then the corresponding solution $(u, w)$
of  \eqref{3DMP} admits the following bounds for any $t>0$,
\begin{gather}
\|\Lambda^{\frac{5}{4}}u(t)\|_{L^{2}}^{2}
+\int_{0}^{t}{\|\Lambda^{\alpha+\frac{5}{4}}u(\tau)\|_{L^{2}}^{2}\,d\tau}\leq
C(t,\,u_{0},\,w_{0}),\label{asdAZf}\\
\|\Lambda^{\rho}w(t)\|_{L^{2}}^{2}
+\int_{0}^{t}{\|\Lambda^{\rho+\beta}w(\tau)\|_{L^{2}}^{2}\,d\tau}\leq
C(t,\,u_{0},\,w_{0}),\label{dsfsdae}
\end{gather}
where $\rho>0$ satisfies
 \begin{eqnarray}\label{sgy78fv3s}
 \frac{9}{4}-(\alpha+\beta)< \rho<1+\beta.
  \end{eqnarray}
In particular, due to $\alpha\geq
\frac{5}{4}$, it follows from \eqref{asdAZf} that
\begin{eqnarray}\label{AZ001}
\|\Lambda^{\frac{5}{4}}u(t)\|_{L^{2}}^{2}
+\int_{0}^{t}{\|\Lambda^{\frac{5}{2}}u(\tau)\|_{L^{2}}^{2}\,d\tau}\leq
C(t,\,u_{0},\,w_{0}).
\end{eqnarray}
\end{lemma}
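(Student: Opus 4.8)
The plan is to run a coupled energy estimate for $u$ at regularity level $\frac54$ and for $w$ at level $\rho$ simultaneously, because neither quantity can be closed alone: the $\nabla\times w$ forcing in the $u$-equation requires control of $w$ at a level strictly above $1$, while the $u\cdot\nabla w$ transport term in the $w$-equation, after commutator estimates, needs $u$ at level $\frac52$ in $L^2_t$, which is exactly what the $\frac54$-estimate for $u$ supplies (via $\alpha\ge\frac54$). First I would apply $\Lambda^{\frac54}$ to $\eqref{3DMP}_1$ and pair with $\Lambda^{\frac54}u$, obtaining $\frac12\frac{d}{dt}\|\Lambda^{\frac54}u\|_{L^2}^2+\|\Lambda^{\alpha+\frac54}u\|_{L^2}^2 = -\int[\Lambda^{\frac54},u\cdot\nabla]u\cdot\Lambda^{\frac54}u\,dx+\int\Lambda^{\frac54}(\nabla\times w)\,\Lambda^{\frac54}u\,dx$. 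The nonlinear commutator is handled by the Kato–Ponce variant already quoted, interpolating between $\|\Lambda^{\alpha+\beta-1}u\|_{L^2}$ (bounded by Lemma \ref{addt01}) and $\|\Lambda^{\alpha+\frac54}u\|_{L^2}$, absorbing the top order into the dissipation; this is where $\alpha\ge\frac54$ enters to make the interpolation exponents admissible. The forcing term is bounded by $C\|\Lambda^{\frac94-\alpha}w\|_{L^2}\|\Lambda^{\alpha+\frac54}u\|_{L^2}\le \frac14\|\Lambda^{\alpha+\frac54}u\|_{L^2}^2+C\|\Lambda^{\frac94-\alpha}w\|_{L^2}^2$, and since $\frac94-\alpha\le\beta+\frac{1}{?}$ — more precisely since $\rho>\frac94-(\alpha+\beta)$ forces $\rho+\beta>\frac94-\alpha$ — the quantity $\|\Lambda^{\frac94-\alpha}w\|_{L^2}^2$ is interpolated between $\|w\|_{L^2}$ and $\|\Lambda^{\rho+\beta}w\|_{L^2}$, i.e.\ it sits in $L^1_t$ once we have the $w$-estimate.

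Next I would apply $\Lambda^{\rho}$ to $\eqref{3DMP}_2$ and pair with $\Lambda^{\rho}w$, getting $\frac12\frac{d}{dt}\|\Lambda^\rho w\|_{L^2}^2+\|\Lambda^{\rho+\beta}w\|_{L^2}^2+2\|\Lambda^\rho w\|_{L^2}^2+\|\Lambda^\rho\nabla\cdot w\|_{L^2}^2 = -\int[\Lambda^{\rho},u\cdot\nabla]w\cdot\Lambda^\rho w\,dx+\int\Lambda^\rho(\nabla\times u)\,\Lambda^\rho w\,dx$. The linear term is bounded by $C\|\Lambda^{\rho+1}u\|_{L^2}\|\Lambda^\rho w\|_{L^2}$; since $\rho<1+\beta\le\alpha+\frac12\le\frac32$ (using $\alpha\ge\frac54$ and $\alpha+\beta\ge\frac74$, so $\rho+1<\frac52$), $\|\Lambda^{\rho+1}u\|_{L^2}$ is controlled by interpolation between $\|\Lambda^{\frac54}u\|_{L^2}$ and $\|\Lambda^{\frac52}u\|_{L^2}$, hence lies in $L^2_t$ thanks to $\eqref{AZ001}$ — which is itself a consequence of $\eqref{asdAZf}$, so the two estimates must indeed be proved together in a single Gronwall loop. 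The genuinely delicate term is the commutator $\int[\Lambda^\rho,u\cdot\nabla]w\cdot\Lambda^\rho w\,dx$: by Kato–Ponce it is $\le C\|\nabla u\|_{L^{p}}\|\Lambda^\rho w\|_{L^{2p/(p-2)}}\|\Lambda^\rho w\|_{L^2}$ plus a symmetric piece, and after Sobolev embedding and interpolation one is left needing $\|\Lambda^{?}u\|_{L^2}$ at a level covered by Lemma \ref{addt01} together with $\eqref{AZ001}$; balancing the $w$-factors against $\|\Lambda^{\rho+\beta}w\|_{L^2}$ (so that a fraction can be absorbed) forces $\rho+\beta$ to exceed a threshold that turns out to be exactly $\frac94-\alpha+(\text{something involving }\rho)$, and tracing the arithmetic gives the lower constraint $\rho>\frac94-(\alpha+\beta)$ — this is the place where $\alpha+\beta\ge\frac74$ is used (it guarantees the admissible window for $\rho$ is nonempty, since $\frac94-\frac74=\frac12<1+\beta$), and the paper's reference to $\eqref{sdsfew09}$ confirms this commutator is the crux.

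I expect this commutator estimate for $w$ to be the main obstacle: it is the only place where the three hypotheses $\alpha\ge\frac54$, $\beta>0$, $\alpha+\beta\ge\frac74$ all interlock, and one has to choose the H\"older exponent $p$ and split the derivatives between the two copies of $\Lambda^\rho w$ so that (i) the top-order $w$-term is a genuine fraction of $\|\Lambda^{\rho+\beta}w\|_{L^2}^2$ absorbable on the left, (ii) the $u$-factor only involves norms already bounded — namely $\|u\|_{H^\alpha}$ from $\eqref{t301}$, $\|\Lambda^{\alpha+\beta-1}u\|_{L^2}$ from Lemma \ref{addt01}, and $\|\Lambda^{\frac54}u\|_{L^2}$, $\|\Lambda^{\frac52}u\|_{L^2}$ from the simultaneously-running estimate — and (iii) $\beta>0$ is actually used (when $\beta=0$ there is no dissipative term to absorb into, which is precisely why Theorem \ref{Th1}'s borderline case is separated out). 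Once both differential inequalities are in hand, I would add them, write the right-hand side as $C(1+\|u\|_{H^\alpha}^2+\text{integrable terms})\big(\|\Lambda^{\frac54}u\|_{L^2}^2+\|\Lambda^\rho w\|_{L^2}^2\big)+(\text{$L^1_t$ data})$, invoke Gronwall using $\eqref{sdsfew16}$ and $\eqref{t301}$, and read off $\eqref{asdAZf}$ and $\eqref{dsfsdae}$; $\eqref{AZ001}$ then follows by interpolating $\Lambda^{\frac52}u$ between $\Lambda^{\frac54}u$ and $\Lambda^{\alpha+\frac54}u$ and using $\alpha\ge\frac54$.
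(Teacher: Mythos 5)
Your architecture coincides with the paper's: run the $\Lambda^{\frac54}$-estimate for $u$ and the $\Lambda^{\rho}$-estimate for $w$ simultaneously, let the forcing term in the $u$-equation produce the lower constraint $\rho>\frac94-(\alpha+\beta)$, let the transport commutator in the $w$-equation produce $\rho<1+\beta$ and the condition $\alpha+\beta\ge\frac74$, and close with one Gronwall loop using \eqref{t301}, \eqref{addtd201} and \eqref{sdsfew16}. But the step you yourself single out as the main obstacle --- the commutator $\int[\Lambda^{\rho},u\cdot\nabla]w\,\Lambda^{\rho}w\,dx$ --- is precisely the step your sketch does not close, and the route you gesture at (standard $L^2$ Kato--Ponce ``plus a symmetric piece'') fails. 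The symmetric piece is $\|\Lambda^{\rho+1}u\|_{L^{p_2}}\|w\|_{L^{q_2}}$ (in the $[\Lambda^{\rho}\partial_{x_i},u_i]$ form) or $\|\Lambda^{\rho}u\|_{L^{p_2}}\|\nabla w\|_{L^{q_2}}$. Neither factor on $w$ is available at this stage: no $L^{q}$ bound on $w$ with $q>2$ exists (the term $\nabla\nabla\cdot w$ blocks the usual $L^q$ energy estimate, as the introduction stresses), taking $q_2=2$ forces $\|\Lambda^{\rho+1}u\|_{L^{\infty}}$, which is beyond \eqref{AZ001}, and $\nabla w$ is not controlled since $\rho$ may be far below $1$ and the only dissipation on $w$ is of order $\beta$, possibly tiny. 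The paper's resolution is the Besov commutator estimate \eqref{qqqqqqtNCE12}: measure $[\Lambda^{\rho},u\cdot\nabla]w$ in the \emph{negative} space $B^{-\beta}_{2,2}$ and pair it with $\|\Lambda^{\rho}w\|_{B^{\beta}_{2,2}}$, so that the only terms produced are $\|\nabla u\|_{L^{m_0}}\|w\|_{B^{\rho-\beta+3/m_0}_{2,2}}$ and the harmless $\|u\|_{L^2}\|w\|_{L^2}$; the admissible window $\frac{7-4\alpha-2\beta}{6}\le\frac{1}{m_0}\le\frac{\beta}{3}$ is nonempty exactly because $\alpha+\beta\ge\frac74$, the $u$-factor lands on $\|u\|_{H^{2\alpha+\beta-1}}\in L^2_t$ from Lemma \ref{addt01}, and the restriction $s\in(-1,1-\delta)$ in \eqref{qqqqqqtNCE12} is what produces $\rho<1+\beta$ (and the restriction to $\beta<1$, with $\beta\ge1$ handled separately). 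Without this device, or an equivalent one, the lemma is not proved; see \eqref{sdsfew09}.

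Two smaller points. First, your bound for the linear term $N_3=\int\Lambda^{\rho}(\nabla\times u)\Lambda^{\rho}w\,dx$ by $C\|\Lambda^{\rho+1}u\|_{L^2}\|\Lambda^{\rho}w\|_{L^2}$ requires $\rho+1\le\frac52$, and your justification ``$\rho<1+\beta\le\alpha+\frac12$'' is false in general (take $\alpha=\frac54$, $\beta=\frac9{10}$); the paper instead shifts $\beta$ derivatives onto $w$, bounding $N_3\le\|\Lambda^{\rho+1-\beta}u\|_{L^2}\|\Lambda^{\rho+\beta}w\|_{L^2}$ with $\rho+1-\beta<2\le\alpha+\frac54$, which covers the whole window \eqref{sgy78fv3s}. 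Second, the lower constraint $\rho>\frac94-(\alpha+\beta)$ comes from the forcing term $N_2$ in the $u$-equation (as you correctly say at first), not from the $w$-commutator; the latter yields the upper constraint and the condition $\alpha+\beta\ge\frac74$.
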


\begin{rem}\rm
	Combining with  \eqref{t301}, we see that \eqref{dsfsdae} is true  for any
	$0\leq \rho<1+\beta$.  This fact will be used repeatedly later.
\end{rem}

\begin{proof}
Applying $\Lambda^{\frac{5}{4}}$ to $\eqref{3DMP}_{1}$
and taking the $L^{2}$ inner product with $\Lambda^{\frac{5}{4}}u$ yield
\begin{align}\label{sdsfew01}
\frac{1}{2}\frac{d}{dt}\|\Lambda^{\frac{5}{4}}u(t)\|_{L^{2}}^{2}
+\|\Lambda^{\alpha+\frac{5}{4}}
u\|_{L^{2}}^{2}
&=  -\int_{\mathbb{R}^{3}}{[\Lambda^{\frac{5}{4}},
u\cdot\nabla]u\cdot \Lambda^{\frac{5}{4}}u\,dx}
+\int_{\mathbb{R}^{3}}{\Lambda^{\frac{5}{4}}(\nabla\times w)
\Lambda^{\frac{5}{4}}u\,dx}\nonumber\\
&:=  N_{1}+N_{2}.
\end{align}
Applying $\Lambda^{\rho}$ to $\eqref{3DMP}_{2}$ and taking the
inner product with $\Lambda^{\rho}w$, we obtain
\begin{align}\label{sdsfew02}&
\frac{1}{2}\frac{d}{dt}\|\Lambda^{\rho}w(t)\|_{L^{2}}^{2}
+\|\Lambda^{\rho+\beta}w \|_{L^{2}}^{2}+2\|\Lambda^{\rho}w\|^2_{L^2}+\|\Lambda^{\rho}\nabla\cdot w\|_{L^{2}}^{2}\nonumber\\&= \int_{\mathbb{R}^{3}}
\Lambda^{\rho}\big(\nabla\times u\big)\Lambda^{\rho}w\,dx -\int_{\mathbb{R}^{3}}
[\Lambda^{\rho}, u \cdot
\nabla] w\Lambda^{\rho}w\,dx\nonumber\\
&:=  N_{3}+N_{4}.
\end{align}
Combing \eqref{sdsfew01} and \eqref{sdsfew02} leads to
 \begin{eqnarray}\label{sdsfew03}
 \frac{1}{2}\frac{d}{dt}(\|\Lambda^{\frac{5}{4}}u(t)\|_{L^{2}}^{2}
+\|\Lambda^{\rho}w(t)\|_{L^{2}}^{2})
+\|\Lambda^{\alpha+\frac{5}{4}}
u\|_{L^{2}}^{2}+\|\Lambda^{\rho+\beta}w \|_{L^{2}}^{2}
\leq \sum_{k=1}^{4}N_{k}.
 \end{eqnarray}
Thanks to the Sobolev embedding inequality and the Kato-Ponce inequality, we obtain
\begin{align}\label{sdsfew04}
N_{1}
 &\leq  \|[\Lambda^{\frac{5}{4}},
u\cdot\nabla]u\|_{L^{2}}\|\Lambda^{\frac{5}{4}}u\|_{L^{2}}
 \nonumber\\
 &\leq C
 \Big(\|\nabla u\|_{L^{\frac{12}{5}}}\|\Lambda^{\frac{1}{4}}
 \nabla u\|_{L^{12}}+\|\nabla u\|_{L^{\frac{12}{5}}}
 \|\Lambda^{\frac{5}{4}}
 u\|_{L^{12}}\Big)\|\Lambda^{\frac{5}{4}}u\|_{L^{2}}
 \nonumber\\
 &\leq C \|\nabla u\|_{L^{\frac{12}{5}}}
 \|\Lambda^{\frac{5}{4}}
 u\|_{L^{12}}\|\Lambda^{\frac{5}{4}}u\|_{L^{2}}
  \nonumber\\
 &\leq C \|\Lambda^{\frac{5}{4}}u\|_{L^{2}}
 \|\Lambda^{\frac{5}{2}}u\|_{L^{2}}
 \|\Lambda^{\frac{5}{4}}u\|_{L^{2}}
  \nonumber\\
 &\leq C \|u\|_{H^{\alpha}}
 \|u\|_{H^{\alpha+\frac{5}{4}}}
 \|\Lambda^{\frac{5}{4}}u\|_{L^{2}}
\nonumber\\
 &\leq
 \frac{1}{16}\|\Lambda^{\alpha+\frac{5}{4}}u\|_{L^{2}}^{2}+
 C(1+\|u\|_{H^{\alpha}}^{2})(1+
 \|\Lambda^{\frac{5}{4}}u\|_{L^{2}}^{2}).
\end{align}
The equivalence of the norms in the following three spaces has been used here and will be used frequently,
$$\|f\|_{B_{2,2}^{s}}\approx \|f\|_{H^{s}}\approx \|f\|_{L^{2}}+\|\Lambda^{s}f\|_{L^{2}},\ \ \forall\,s\geq0,$$
where $B_{p,q}^{s}$ denotes the nonhomogeneous Besov space (see Appendix for its definition).
The next term $N_{2}$ can be bounded by
\begin{align}\label{sdsfew05}
N_{2}
 &\leq  \|w\|_{H^{\frac{9}{4}-\alpha}}\|\Lambda^{\alpha+\frac{5}{4}}u\|_{L^{2}}\nonumber\\
  &\leq  C\|w \|_{H^{\rho+\beta}} \|\Lambda^{\alpha+\frac{5}{4}}u\|_{L^{2}}
  \nonumber\\
  &\leq \frac{1}{16}\|\Lambda^{\alpha+\frac{5}{4}}u\|_{L^{2}}^{2}
  +\frac{1}{16}\|\Lambda^{\rho+\beta}w \|_{L^{2}}^{2} +C\|w\|_{L^{2}}^{2},
\end{align}
where $\rho$ satisfies
\begin{eqnarray}\label{sdsfew06}
\rho> \frac{9}{4}-(\alpha+\beta).
\end{eqnarray}
Similarly,
\begin{align}\label{sdsfew07}
N_{3}
 &\leq \|\Lambda^{\rho-\beta}\big(\nabla\times u\big)\|_{L^{2}}\|\Lambda^{\rho+\beta}w\|_{L^{2}}\nonumber\\
 &\leq C\|u\|_{H^{\alpha+\frac{5}{4}}}
 \|\Lambda^{\rho+\beta}w\|_{L^{2}}
 \nonumber\\
  &\leq  \frac{1}{16}\|\Lambda^{\alpha+\frac{5}{4}}u\|_{L^{2}}^{2}+
  \frac{1}{16} \|\Lambda^{\rho+\beta}w\|_{L^{2}}^{2}
  +C\|u\|_{L^{2}}^{2},
\end{align}
where $\rho$ satisfies
\begin{eqnarray}\label{sdsfew08}
\rho<\frac{1}{4}+\alpha+\beta.
\end{eqnarray}
To deal with $N_{4}$, we need the commutator estimate (see, e.g., \cite[Lemma 2.6]{Yeacap18})
\begin{align}\label{qqqqqqtNCE12}
\|[\Lambda^{\delta},f\cdot\nabla]g\|_{B_{p,r}^{s}}\leq
C\big(\|\nabla f\|_{L^{p_{1}}}\|g\|_{
{B}_{p_{2},r}^{s+\delta}}+\|f\|_{L^{2}}\|g\|_{L^{2}}\big),
\end{align}
where $\nabla\cdot f=0$  and $\frac{1}{p}=\frac{1}{p_{1}}+\frac{1}{p_{2}}$ with $p\in[2,
\infty)$, $p_{1},\,p_{2}\in[2, \infty]$, $r\in[1, \infty]$ and $s\in(-1, 1-\delta)$ for $\delta\in(0,2)$. By the Sobolev embedding inequality, the H$\rm\ddot{o}$lder inequality and \eqref{qqqqqqtNCE12}, one has
\begin{align}\label{sdsfew09}
N_{4}
\leq&  C\|[\Lambda^{\rho}, u\cdot\nabla]w\|_{B_{2,2}^{-\beta}}\|\Lambda^{\rho}w\|_{B_{2,2}^{\beta}}\nonumber\\
\leq & C(\|\nabla
u\|_{L^{m_{0}}}\|w\|_{B_{\frac{2m_{0}}{m_{0}-2},2}^{\rho-\beta}}
+\|u\|_{L^{2}}\|w\|_{L^{2}})
(\|\Lambda^{\rho}w\|_{L^{2}}+\|\Lambda^{\rho+\beta}w\|_{L^{2}}) \nonumber\\
\leq &
C(\|u\|_{H^{2\alpha+\beta-1}}
\|w\|_{B_{2,2}^{\rho-\beta+\frac{3}{m_{0}}}}+\|u\|_{L^{2}}\|w\|_{L^{2}})
(\|\Lambda^{\rho}w\|_{L^{2}}+\|\Lambda^{\rho+\beta}w\|_{L^{2}})
\nonumber\\
\leq&
C(\|u\|_{H^{2\alpha+\beta-1}}
\|w\|_{H^{\rho}}+\|u\|_{L^{2}}\|w\|_{L^{2}})
(\|\Lambda^{\rho}w\|_{L^{2}}+\|\Lambda^{\rho+\beta}w\|_{L^{2}})
\nonumber\\
\leq&
  \frac{1}{16}\|\Lambda^{\rho+\beta}w\|_{L^{2}}^{2}
  +C(1+\|u\|_{H^{2\alpha+\beta-1}}^{2})
  \|\Lambda^{\rho}w\|_{L^{2}}^{2}\nonumber\\& +C(1+\|u\|_{L^{2}}^{2})(1+
  \|w\|_{L^{2}}^{2}),
\end{align}
where $\rho$ and  $m_{0}>2$ satisfy
\begin{eqnarray}\label{aasdsfew10}
 \rho<1+\beta,\qquad \frac{7-4\alpha-2\beta}{6}\leq \frac{1}{m_{0}}\leq\frac{\beta}{3}.
\end{eqnarray}
Such $m_{0}$ exists due to $\alpha+\beta\geq \frac{7}{4}$. It is worth noting that this is the only place
where we use the assumption $\alpha+\beta\geq \frac{7}{4}$.
We also remark that in order to obtain \eqref{sdsfew09}, we need the restriction $\beta<1$. This is due to the use of \eqref{qqqqqqtNCE12}.
When $\beta\geq1$,
it is easy to show \eqref{AZ001} without exploiting the dedicate estimate (\ref{sdsfew02}). Therefore,  \eqref{AZ001} holds for both $\beta<1$ and $\beta \ge 1$.
Inserting the estimates \eqref{sdsfew04}, \eqref{sdsfew05}, \eqref{sdsfew07} and \eqref{sdsfew09} in \eqref{sdsfew03}, we conclude
 \begin{align*}
 &  \frac{d}{dt}(\|\Lambda^{\frac{5}{4}}u(t)\|_{L^{2}}^{2}
+\|\Lambda^{\rho}w(t)\|_{L^{2}}^{2})
+\|\Lambda^{\alpha+\frac{5}{4}}
u\|_{L^{2}}^{2}+\|\Lambda^{\rho+\beta}w \|_{L^{2}}^{2}\nonumber\\&\leq
C(1+\|u\|_{H^{\alpha}}^{2}+\|u\|_{H^{2\alpha+\beta-1}}^{2})
 (\|\Lambda^{\frac{5}{4}}u\|_{L^{2}}^{2}
 +\|\Lambda^{\rho}w\|_{L^{2}}^{2}) \nonumber\\& \quad+C(1+\|u\|_{L^{2}}^{2})(1+
  \|w\|_{L^{2}}^{2}),
 \end{align*}
where $\rho>0$ satisfies \eqref{sgy78fv3s}
by combining \eqref{sdsfew06}, \eqref{sdsfew08} and \eqref{aasdsfew10}.
By \eqref{addtd201}, \eqref{sdsfew16} and the Gronwall
inequality, we obtain
$$\|\Lambda^{\frac{5}{4}}u(t)\|_{L^{2}}^{2}+\|\Lambda^{\rho}w(t)\|_{L^{2}}^{2}
+\int_{0}^{t}{(\|\Lambda^{\alpha+\frac{5}{4}}u(\tau)\|_{L^{2}}^{2}+\|\Lambda^{\rho+\beta}w (\tau)\|_{L^{2}}^{2})\,d\tau}\leq
C(t,\,u_{0},\,w_{0}).$$
This completes the proof of Lemma
\ref{AZL302}.
\end{proof}

\vskip .1in
With the help of \eqref{AZ001}, we can now establish the higher regularity estimate for $w$, which can be stated as the following lemma.
\begin{lemma}\label{AZL3dsf8}
Assume $(u_{0},w_{0})$ satisfies the assumptions stated in Theorem \ref{Th1}.
If $\alpha\geq
\frac{5}{4}$, $\beta>0$ and $\alpha+\beta\geq \frac{7}{4}$, then the corresponding solution $(u, w)$
of  \eqref{3DMP} admits the following bound for any $t>0$ and for any $\sigma\leq \frac{3}{2}$,
\begin{eqnarray*}
\|\Lambda^{\sigma}w(t)\|_{L^{2}}^{2}
+\int_{0}^{t}{\|\Lambda^{\sigma+\beta}w(\tau)\|_{L^{2}}^{2}\,d\tau}\leq
C(t,\,u_{0},\,w_{0}).
\end{eqnarray*}
In particular, due to $\beta>0$, it holds
\begin{eqnarray}\label{sfdserndf}
\int_{0}^{t}{\|w(\tau)\|_{L^{\infty}}^{2}\,d\tau}\leq
C(t,\,u_{0},\,w_{0}).
\end{eqnarray}
\end{lemma}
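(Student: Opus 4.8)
The plan is to apply $\Lambda^\sigma$ to the $w$-equation $\eqref{3DMP}_2$, take the $L^2$ inner product with $\Lambda^\sigma w$, and carefully estimate the resulting nonlinear and coupling terms, using the already-established bounds \eqref{t301}, \eqref{addtd201}, \eqref{AZ001}, and \eqref{dsfsdae} (together with the Remark that \eqref{dsfsdae} holds for all $0\le\rho<1+\beta$). After integrating by parts we obtain
\begin{align*}
\frac{1}{2}\frac{d}{dt}\|\Lambda^\sigma w(t)\|_{L^2}^2+\|\Lambda^{\sigma+\beta}w\|_{L^2}^2+2\|\Lambda^\sigma w\|_{L^2}^2+\|\Lambda^\sigma\nabla\cdot w\|_{L^2}^2
=\int_{\mathbb{R}^3}\Lambda^\sigma(\nabla\times u)\,\Lambda^\sigma w\,dx-\int_{\mathbb{R}^3}[\Lambda^\sigma,u\cdot\nabla]w\,\Lambda^\sigma w\,dx.
\end{align*}
The coupling term is straightforward: since $\sigma\le\frac32\le\frac54+\alpha-1$ by $\alpha\ge\frac54$, we bound $\|\Lambda^\sigma\nabla\times u\|_{L^2}\le C\|u\|_{H^{\alpha+5/4}}$ (or, more economically, interpolate using \eqref{AZ001} which gives $\Lambda^{5/2}u\in L^2_tL^2_x$, so $\Lambda^{\sigma+1}u\in L^2_tL^2_x$ whenever $\sigma+1\le\frac52$, i.e. $\sigma\le\frac32$) and absorb a copy of $\|\Lambda^{\sigma+\beta}w\|_{L^2}^2$ — note $\sigma-\beta\le\sigma+1$ — plus a lower-order term $C\|u\|_{L^2}^2$. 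Actually the cleanest route is $N_3^{\mathrm{new}}\le C\|\Lambda^{\sigma+1}u\|_{L^2}\|\Lambda^\sigma w\|_{L^2}$ and Young's inequality, which is controlled by \eqref{AZ001} once we Gronwall in $\|\Lambda^\sigma w\|_{L^2}^2$.

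The main obstacle is the commutator term $-\int[\Lambda^\sigma,u\cdot\nabla]w\,\Lambda^\sigma w\,dx$, precisely because $\sigma$ may be as large as $\frac32$, which puts us outside the range $s\in(-1,1-\delta)$ permitted by the commutator estimate \eqref{qqqqqqtNCE12} used for $N_4$ in Lemma \ref{AZL302}. To handle this I would instead use the classical Kato–Ponce commutator estimate (in the variant stated in the proof of Lemma \ref{addt01}): write
\begin{align*}
\|[\Lambda^\sigma,u\cdot\nabla]w\|_{L^2}\le C\big(\|\nabla u\|_{L^{p_1}}\|\Lambda^{\sigma-1}\nabla w\|_{L^{q_1}}+\|\Lambda^\sigma u\|_{L^{p_2}}\|\nabla w\|_{L^{q_2}}\big),
\end{align*}
then choose Lebesgue exponents via Sobolev embedding so that one factor uses the regularity of $u$ supplied by \eqref{AZ001} (namely $\Lambda^{5/2}u\in L^2_tL^2_x$, hence $\nabla u$, $\Lambda^\sigma u$ in spaces like $L^{p}$ with $p$ large) and the other factor uses $\|\Lambda^\sigma w\|_{L^2}$ interpolated with $\|\Lambda^{\sigma+\beta}w\|_{L^2}$, so that the total $w$-power on the high-derivative side is at most $\sigma+\beta$. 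Concretely one expects an estimate of the shape
\begin{align*}
N_4^{\mathrm{new}}\le C\|\Lambda^{5/2}u\|_{L^2}\,\|\Lambda^\sigma w\|_{L^2}^{1-\theta}\|\Lambda^{\sigma+\beta}w\|_{L^2}^{\theta}\,\|\Lambda^\sigma w\|_{L^2}\le \frac{1}{4}\|\Lambda^{\sigma+\beta}w\|_{L^2}^2+C\big(1+\|\Lambda^{5/2}u\|_{L^2}^{2}\big)\|\Lambda^\sigma w\|_{L^2}^2,
\end{align*}
where $\theta<1$ is arranged by taking $\sigma\le\frac32$ and $\beta>0$ (a short check using $\alpha\ge\frac54$, $\alpha+\beta\ge\frac74$ guarantees the exponents are admissible; if $\sigma+\beta$ slightly exceeds what one derivative of $w$ interpolation allows, one splits into low and high frequencies, handling the low part by \eqref{dsfsdae} for some $\rho<1+\beta$ and the high part by the dissipation). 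Since by \eqref{AZ001} the quantity $\|\Lambda^{5/2}u\|_{L^2}^2$ is integrable in time, Gronwall's inequality then closes the estimate.

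Finally, once $\|\Lambda^\sigma w(t)\|_{L^2}^2+\int_0^t\|\Lambda^{\sigma+\beta}w\|_{L^2}^2\,d\tau\le C(t,u_0,w_0)$ is established for all $\sigma\le\frac32$, I take $\sigma>\frac34$ (which is permissible since $\frac34<\frac32$); then the Sobolev embedding $H^\sigma(\mathbb{R}^3)\hookrightarrow L^\infty(\mathbb{R}^3)$ for $\sigma>\frac32$ — or, to reach $L^\infty$ with $\sigma\le\frac32$, interpolation $\|w\|_{L^\infty}\le C\|\Lambda^\sigma w\|_{L^2}^{a}\|\Lambda^{\sigma+\beta}w\|_{L^2}^{1-a}$ with $\sigma<\frac32<\sigma+\beta$ and a suitable $a\in(0,1)$, so that $\int_0^t\|w\|_{L^\infty}^2\,d\tau$ is dominated by $\sup_{[0,t]}\|\Lambda^\sigma w\|_{L^2}^{2a}\cdot\big(\int_0^t\|\Lambda^{\sigma+\beta}w\|_{L^2}^2\,d\tau\big)^{1-a}\cdot t^{\,a}$ via Hölder — yields \eqref{sfdserndf}. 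The step requiring care is verifying the interpolation exponent in the $L^\infty$ bound: we need $\sigma+\beta>\frac32$ (achievable by taking $\sigma$ close to $\frac32$ since $\beta>0$), and then $\|w\|_{L^\infty}\le C\|w\|_{H^{\sigma}}^{a}\|w\|_{H^{\sigma+\beta}}^{1-a}$ with $a$ chosen so the effective Sobolev exponent exceeds $\frac32$; squaring and applying Hölder in time with the integrable high-norm gives the claim.
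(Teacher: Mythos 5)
Your proposal is correct and follows essentially the same route as the paper: the same $\Lambda^\sigma$ energy identity, the coupling term absorbed via \eqref{AZ001}, the commutator handled by a Kato--Ponce estimate with exponents chosen so that the $u$-factor is controlled by $\|u\|_{H^{5/2}}$ and the $w$-factor interpolates between $\|\Lambda^\sigma w\|_{L^2}$ and $\|\Lambda^{\sigma+\beta}w\|_{L^2}$, followed by Gronwall. The only (immaterial) differences are that the paper first rewrites the commutator as $[\Lambda^\sigma\partial_{x_i},u_i]w$ so that its second Kato--Ponce term reads $\|\Lambda^{\sigma+1}u\|_{L^2}\|w\|_{L^\infty}$ with $\|w\|_{L^\infty}\le C\|w\|_{H^{\sigma+\beta}}$ (using $\sigma+\beta>\tfrac32$), and it obtains \eqref{sfdserndf} directly from $H^{\sigma+\beta}\hookrightarrow L^\infty$ rather than by your interpolation in time.
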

\begin{proof}
Applying $\Lambda^{\sigma}$ to $ \eqref{3DMP}_{2}$ and taking the
inner product with $\Lambda^{\sigma}w$, one gets
\begin{align}\label{sdsfew20}&
\frac{1}{2}\frac{d}{dt}\|\Lambda^{\sigma}w(t)\|_{L^{2}}^{2}
+\|\Lambda^{\sigma+\beta}w \|_{L^{2}}^{2}+2\|\Lambda^{\sigma}w\|^2_{L^2}+\|\Lambda^{\sigma}\nabla\cdot w\|_{L^{2}}^{2}\nonumber\\&= \int_{\mathbb{R}^{3}}
\Lambda^{\sigma}\big(\nabla\times u\big)\Lambda^{\sigma}w\,dx -\int_{\mathbb{R}^{3}}
[\Lambda^{\sigma}, u \cdot
\nabla] w\Lambda^{\sigma}w\,dx\nonumber\\
&:= L_{1}+L_{2}.
\end{align}
We deduce from the Gagliardo-Nirenberg inequality that
\begin{align}\label{sdsfew21}
L_{1}&\leq C\|\Lambda^{\sigma+1-\beta}u\|_{L^{2}}\|\Lambda^{\sigma+\beta}w \|_{L^{2}}\nonumber\\
&\leq \frac{1}{16}\|\Lambda^{\sigma+\beta}w \|_{L^{2}}^{2}+C\|\Lambda^{\sigma+1-\beta}u\|_{L^{2}}^{2}
\nonumber\\
&\leq \frac{1}{16}\|\Lambda^{\sigma+\beta}w \|_{L^{2}}^{2}+C\|u\|_{L^{2}}^{2}+C\|\Lambda^{\frac{5}{2}}u\|_{L^{2}}^{2},
\end{align}
where $\sigma\leq \beta+\frac{3}{2}$. Making use of $\nabla\cdot u=0$ and the Kato-Ponce inequality, we achieve
\begin{align}\label{sdsfew22}
L_{2}&= -\int_{\mathbb{R}^{3}}
[\Lambda^{\sigma}\partial_{x_{i}}, u_{i}
] w\Lambda^{\sigma}w\,dx
\nonumber\\
&\leq \|[\Lambda^{\sigma}\partial_{x_{i}}, u_{i}
] w \|_{L^{2}}\|\Lambda^{\sigma}w\|_{L^{2}}
\nonumber\\
&\leq C(\|\nabla u \|_{L^{m_{1}}}\|\Lambda^{\sigma}w\|_{L^{\frac{2m_{1}}{m_{1}-2}}}+
\|\Lambda^{\sigma+1}u\|_{L^{2}}\|w\|_{L^{\infty}})\|\Lambda^{\sigma}w\|_{L^{2}}
\nonumber\\
&\leq C(\|u \|_{H^{\frac{5}{2}}}\|\Lambda^{\sigma}w\|_{L^{2}}^{1-\frac{3}{\beta m_{1}}}\|\Lambda^{\sigma+\beta}w\|_{L^{2}}
^{\frac{3}{\beta m_{1}}}+
\|u \|_{H^{\frac{5}{2}}}\| w \|_{H^{\sigma+\beta}})\|\Lambda^{\sigma}w\|_{L^{2}}
\nonumber\\
&\leq \frac{1}{16}\|\Lambda^{\sigma+\beta}w \|_{L^{2}}^{2}+C(1+\|u\|_{L^{2}}^{2}+\|\Lambda^{\frac{5}{2}}u\|_{L^{2}}^{2})
\|\Lambda^{\sigma}w\|_{L^{2}}^{2},
\end{align}
where $\max\{\frac{3}{\beta},\,2\}<m_{1}<\infty$ and $\frac{3}{2}-\beta<\sigma\leq \frac{3}{2}$.
Putting \eqref{sdsfew21}-\eqref{sdsfew22} into \eqref{sdsfew20} yields
\begin{align*}
 \frac{d}{dt}\|\Lambda^{\sigma}w(t)\|_{L^{2}}^{2}
+\|\Lambda^{\sigma+\beta}w \|_{L^{2}}^{2}\leq C(1+\|u\|_{L^{2}}^{2}+\|\Lambda^{\frac{5}{2}}u\|_{L^{2}}^{2})
(1+\|\Lambda^{\sigma}w\|_{L^{2}}^{2}).
\end{align*}
It follows from \eqref{AZ001} and the Gronwall inequality that
$$\|\Lambda^{\sigma}w(t)\|_{L^{2}}^{2}
+\int_{0}^{t}{\|\Lambda^{\sigma+\beta}w(\tau)\|_{L^{2}}^{2}\,d\tau}\leq
C(t,\,u_{0},\,w_{0}).$$
We thus complete the proof of Lemma \ref{AZL3dsf8}.
\end{proof}

\vskip .1in
Finally, with \eqref{AZ001} and \eqref{sfdserndf} at our disposal, we are ready to show the global $H^s$-bound.
\begin{proof}[{Proof of Theorem \ref{Th1}}]
Applying $\Lambda^{s}$ with $s>\frac{5}{2}$ to  \eqref{3DMP} and
taking the $L^{2}$ inner product with $(\Lambda^{s}u, \Lambda^{s} w)$, we have
\begin{align}\label{tsdfwef}&
\frac{1}{2}\frac{d}{dt}(\|\Lambda^{s} u\|_{L^{2}}^{2}+\|\Lambda^{s}
w\|_{L^{2}}^{2})+\|\Lambda^{s+\alpha}
  u\|_{L^{2}}^{2}+\|\Lambda^{s+\beta}  w\|_{L^{2}}^{2}+2\|\Lambda^{s}  w\|_{L^{2}}^{2}+\|\Lambda^{s}\nabla\cdot w\|_{L^{2}}^{2}\nonumber\\
&=\int_{\mathbb{R}^{3}}\Big(
\Lambda^{s}(\nabla\times u)\cdot\Lambda^{s} w+\Lambda^{s}
(\nabla\times w)\cdot \,\,\Lambda^{s} u\Big)\,dx
-\int_{\mathbb{R}^{3}}
[\Lambda^{s}, u \cdot \nabla] u \,\Lambda^{s} u\,dx\nonumber\\&\quad-\int_{\mathbb{R}^{3}}
[\Lambda^{s}, u \cdot \nabla]w\, \Lambda^{s} w\,dx
\nonumber\\&:= J_{1}+J_{2}+J_{3}.
\end{align}
Due to $\alpha\geq1$, it is easy to check that
\begin{align*}
J_{1}&\leq  2\|\Lambda^{s+1} u\|_{L^{2}}\|\Lambda^{s} w\|_{L^{2}}\nonumber\\
&\leq  2\|\Lambda^{s} u\|_{L^{2}}^{\frac{\alpha-1}{\alpha}}\|\Lambda^{s+\alpha} u\|_{L^{2}}^{\frac{1}{\alpha}}\|\Lambda^{s} w\|_{L^{2}}
\nonumber\\
&\leq
\frac{1}{16}\|\Lambda^{s+\alpha}
  u\|_{L^{2}}^{2}+C\|\Lambda^{s}  w\|_{L^{2}}^{2}+C\|\Lambda^{s} u\|_{L^{2}}^{2}.
\end{align*}
By the Kato-Ponce inequality, we directly get
\begin{align*}
J_{2}&\leq C\|[\Lambda^{s}, u \cdot \nabla] u \|_{L^{2}}\|\Lambda^{s} u\|_{L^{2}}\nonumber\\
&\leq C\|\nabla u \|_{L^{m_{2}}}\|\Lambda^{s} u\|_{L^{\frac{2m_{2}}{m_{2}-2}}}\|\Lambda^{s} u\|_{L^{2}}
\nonumber\\
&\leq C\|u \|_{H^{\frac{5}{2}}}\|\Lambda^{s} u\|_{L^{2}}^{1-\frac{3}{\alpha m_{2}}}\|\Lambda^{s+\alpha} u\|_{L^{2}}^{\frac{3}{\alpha m_{2}}}\|\Lambda^{s} u\|_{L^{2}}
\nonumber\\
&\leq
\frac{1}{16}\|\Lambda^{s+\alpha}
  u\|_{L^{2}}^{2}+C(\|u\|_{L^{2}}^{2}+\|\Lambda^{\frac{5}{2}}u\|_{L^{2}}^{2})
\|\Lambda^{s}u\|_{L^{2}}^{2},
\end{align*}
where $\max\{\frac{3}{\alpha},\,2\}<m_{2}<\infty$.
Similarly to \eqref{sdsfew22}, one may conclude
\begin{align}
J_{3}=& -\int_{\mathbb{R}^{3}}
[\Lambda^{s}\partial_{x_{i}}, u_{i}
] w\Lambda^{s}w\,dx
\nonumber\\
\leq& \|[\Lambda^{s}\partial_{x_{i}}, u_{i}
] w \|_{L^{2}}\|\Lambda^{s}w\|_{L^{2}}
\nonumber\\
\leq& C(\|\nabla u \|_{L^{m_{1}}}\|\Lambda^{s}w\|_{L^{\frac{2m_{1}}{m_{1}-2}}}+
\|\Lambda^{s+1}u\|_{L^{2}}\|w\|_{L^{\infty}})\|\Lambda^{s}w\|_{L^{2}}
\nonumber\\
\leq& C(\|u \|_{H^{\frac{5}{2}}}\|\Lambda^{s}w\|_{L^{2}}^{1-\frac{3}{\beta m_{1}}}\|\Lambda^{s+\beta}w\|_{L^{2}}
^{\frac{3}{\beta m_{1}}}+
\|u \|_{H^{s+\alpha}}\| w \|_{L^{\infty}})\|\Lambda^{s}w\|_{L^{2}}
\nonumber\\
\leq& \frac{1}{16}\|\Lambda^{s+\alpha}
  u\|_{L^{2}}^{2}+\frac{1}{16}\|\Lambda^{s+\beta}w \|_{L^{2}}^{2}+C(1+\|u\|_{L^{2}}^{2}+\|\Lambda^{\frac{5}{2}}u\|_{L^{2}}^{2}+\| w \|_{L^{\infty}}^{2})\nonumber\\& \times
(\|\Lambda^{s}u\|_{L^{2}}^{2}+\|\Lambda^{s}w\|_{L^{2}}^{2}).\nonumber
\end{align}
Combining the above estimates yields
\begin{align} \label{dfddfr88h}&
 \frac{d}{dt}(\|\Lambda^{s} u\|_{L^{2}}^{2}+\|\Lambda^{s}
w\|_{L^{2}}^{2})+\|\Lambda^{s+\alpha}
  u\|_{L^{2}}^{2}+\|\Lambda^{s+\beta}  w\|_{L^{2}}^{2}\nonumber\\&\leq C(1+\|u\|_{L^{2}}^{2}
  +\|\Lambda^{\frac{5}{2}}u\|_{L^{2}}^{2}+\| w \|_{L^{\infty}}^{2})
(\|\Lambda^{s}u\|_{L^{2}}^{2}+\|\Lambda^{s}w\|_{L^{2}}^{2}).
\end{align}
By \eqref{AZ001} and \eqref{sfdserndf},
$$\int_{0}^{t}{(1+\|u(\tau)\|_{L^{2}}^{2}+\|\Lambda^{\frac{5}{2}}u(\tau)\|_{L^{2}}^{2}+\| w(\tau)\|_{L^{\infty}}^{2})\,d\tau}\leq
C(t,\,u_{0},\,w_{0}).$$
Applying the standard Gronwall inequality to \eqref{dfddfr88h} implies
\begin{align}  \|\Lambda^{s} u(t)\|_{L^{2}}^{2}+\|\Lambda^{s}
w(t)\|_{L^{2}}^{2}+\int_{0}^{t}{(\|\Lambda^{s+\alpha}
  u(\tau)\|_{L^{2}}^{2}+\|\Lambda^{s+\beta}  w(\tau)\|_{L^{2}}^{2})\,d\tau}\leq C(t,\,u_{0},\,w_{0}),\nonumber
  \end{align}
which along with \eqref{t301} gives
  \begin{align} \label{vbdghrtyrt21}\|u(t)\|_{H^{s}}^{2}+\|
w(t)\|_{H^{s}}^{2}+\int_{0}^{t}{(\|\Lambda^{\alpha}
  u(\tau)\|_{H^{s}}^{2}+\|\Lambda^{\beta}  w(\tau)\|_{H^{s}}^{2})\,d\tau}\leq C(t,\,u_{0},\,w_{0}).\end{align}
With \eqref{vbdghrtyrt21} in hand, the uniqueness follows directly due to $s>\frac52$. In fact, let $(u, w, p)$ and
$(\overline{u}, \overline{w}, \overline{p})$ be two solutions to (\ref{3DMP}) with the same initial data. Letting $$\delta u=u-\overline{u},\ \ \ \delta w=w-\overline{w},\ \ \ \delta p=p-\overline{p},$$
we thus have
\begin{equation}\label{difeDMP}
\left\{\begin{array}{l}
\partial_t\delta u+(u\cdot\nabla) \delta u
       +(-\Delta)^{\alpha} \delta u +\nabla \delta p = \nabla \times \delta w-(\delta u\cdot\nabla) \overline{u},\\
\partial_t\delta w + (u\cdot\nabla) \delta w +\delta w
            +(-\Delta)^{\beta} \delta w =\nabla\times \delta u+ \nabla \nabla\cdot \delta w-(\delta u\cdot\nabla) \overline{w},
             \vspace{2mm}\\
\delta u(x,0)=0,\quad \delta w(x,0)=0,
\end{array}\right.
\end{equation}
Multiplying $(\eqref{difeDMP}_{1},\eqref{difeDMP}_{2})$ by $(\delta u,\delta w)$ and integrating over the whole space, one obtains
\begin{align}
&\frac{1}{2}\frac{d}{dt}(\|\delta u(t)\|_{L^{2}}^{2}+\|\delta w(t)\|_{L^{2}}^{2})
+\|\Lambda^{\alpha}\delta u\|_{L^{2}}^{2}
+ \|\Lambda^{\beta}\delta w\|_{L^{2}}^{2} + \|\nabla\cdot\delta w\|_{L^{2}}^{2}\nonumber\\
&\leq  \int_{\mathbb{R}^{3}}{\nabla\times \delta w\cdot \delta u\,dx}+\int_{\mathbb{R}^{3}}{\nabla\times \delta u\cdot \delta w\,dx}- \int_{\mathbb{R}^{3}}{(\delta u\cdot\nabla) \overline{u}\cdot  \delta u\,dx}\nonumber\\&\quad-\int_{\mathbb{R}^{3}}{(\delta u\cdot\nabla) \overline{w}\cdot  \delta w\,dx}
\nonumber\\
&\leq 2 \|\Lambda^{\kappa}\delta u\|_{L^2}\|\Lambda^{1-\kappa}\delta w\|_{L^2}+C\|\nabla  \overline{u}\|_{L^{\infty}}\|\delta u\|_{L^{2}}^{2}+C\|\nabla  \overline{w}\|_{L^{\infty}}\|\delta u\|_{L^{2}}\|\delta w\|_{L^{2}}
\nonumber\\
&\leq
2 (\|\delta u\|_{L^2}^{1-\frac{\kappa}{\alpha}}\|\Lambda^{\alpha}\delta u\|_{L^2}
 ^{\frac{\kappa}{\alpha}})(\|\delta w\|_{L^2}^{1-\frac{1-\kappa}{\beta}}
 \|\Lambda^{\beta}\delta w\|_{L^2}
 ^{\frac{1-\kappa}{\beta}})\nonumber\\
&\quad+C(\|\nabla  \overline{u}\|_{L^{\infty}}+\|\nabla  \overline{w}\|_{L^{\infty}})(\|\delta u\|_{L^{2}}^{2}+\|\delta w\|_{L^{2}}^{2})
\nonumber\\
&\leq
\frac{1}{2}\|\Lambda^\alpha \delta u \|_{L^2}^2+\frac{1}{2}\|\Lambda^{\beta}\delta w\|_{L^2}
 ^{2}
 +C(\|\delta u\|^2_{L^2}+\| \delta w \|^2_{L^2})\nonumber\\
&\quad+C(\|\overline{u}\|_{H^{s}}+\| \overline{w}\|_{_{H^{s}}})(\|\delta u\|_{L^{2}}^{2}+\|\delta w\|_{L^{2}}^{2}),\nonumber
\end{align}
where we have used $s>\frac52$ and have selected $1-\beta<\kappa< \alpha$. This yields
$$\frac{d}{dt}(\|\delta u(t)\|_{L^{2}}^{2}+\|\delta w(t)\|_{L^{2}}^{2})\leq C(1+\|\overline{u}\|_{H^{s}}+\| \overline{w}\|_{_{H^{s}}})(\|\delta u\|_{L^{2}}^{2}+\|\delta w\|_{L^{2}}^{2}),$$
which together with $\eqref{difeDMP}_{3}$ and the Gronwall inequality implies
$$\delta u(t)=\delta w(t)=0.$$
We thus obtain the uniqueness. Therefore, this completes the proof of Theorem \ref{Th1}.
\end{proof}

\vskip .3in
\section{The proof of Theorem \ref{addTh2}}
\label{sec3}
\setcounter{equation}{0}

In this section, we are going to prove Theorem \ref{addTh2}. Our attention is focused on the case when  $\alpha=\frac{7}{4}$ since $\alpha>\frac{7}{4}$ is even simpler to handle.
We begin with the basic $L^2$-estimate.

\begin{lemma}\label{loglema1}
Assume $(u_{0},w_{0})$ satisfies the assumptions stated in Theorem \ref{addTh2}.
Then the corresponding solution $(u, w)$
of  \eqref{log3DMP}  admits the following bound for any $t>0$
\begin{eqnarray} \label{logtt001}
\|u(t)\|^2_{L^2}+\|w(t)\|_{L^2}^2
   + \int_{0}^{t}{(\|\Lambda^{r}u(\tau)\|_{L^{2}}^{2}+
\|\mathcal{L}u(\tau)\|_{L^{2}}^{2})\,d\tau} \leq
C(t,\,u_{0},\,w_{0})
\end{eqnarray}
for any $r \in [0,\,\frac{7}{4})$.
\end{lemma}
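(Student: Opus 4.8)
The plan is a routine energy argument; the only two points needing care are that the dissipation in the $u$--equation is the logarithmically weakened multiplier $\mathcal L^{2}$ rather than $(-\Delta)^{\alpha}$, and that there is no dissipation at all in the $w$--equation (so all derivatives in the coupling terms must be moved onto $u$). First I would pair $\eqref{log3DMP}_{1}$ with $u$ and $\eqref{log3DMP}_{2}$ with $w$ in $L^{2}$ and add. The transport terms vanish because $\nabla\cdot u=0$; by Plancherel $\langle\mathcal L^{2}u,u\rangle=\|\mathcal Lu\|_{L^{2}}^{2}$, while $\langle\nabla\nabla\cdot w,w\rangle=-\|\nabla\cdot w\|_{L^{2}}^{2}$ and $\langle 2w,w\rangle=2\|w\|_{L^{2}}^{2}$, so that
\[
\frac12\frac{d}{dt}\big(\|u\|_{L^{2}}^{2}+\|w\|_{L^{2}}^{2}\big)+\|\mathcal Lu\|_{L^{2}}^{2}+2\|w\|_{L^{2}}^{2}+\|\nabla\cdot w\|_{L^{2}}^{2}=\int_{\mathbb R^{3}}\big((\nabla\times w)\cdot u+(\nabla\times u)\cdot w\big)\,dx.
\]
Since the curl is self-adjoint on $L^{2}$, the right-hand side equals $2\int_{\mathbb R^{3}}w\cdot(\nabla\times u)\,dx$, which I would estimate by $2\|w\|_{L^{2}}\|\Lambda u\|_{L^{2}}$.

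The heart of the matter is then to absorb $\|\Lambda u\|_{L^{2}}$ into $\|\mathcal Lu\|_{L^{2}}$, which works because $\alpha=\frac74>1$ and $g$ is sub-polynomial (it grows at most like a power of $\ln$, as forced by \eqref{logcobd}). Choosing $N=N(g)$ so large that $g(\tau)\le\tau^{3/4}$ for $\tau\ge N$, a low/high frequency split gives
\[
\|\Lambda u\|_{L^{2}}^{2}=\int_{|\xi|\le N}|\xi|^{2}|\widehat u|^{2}\,d\xi+\int_{|\xi|>N}|\xi|^{2}|\widehat u|^{2}\,d\xi\le N^{2}\|u\|_{L^{2}}^{2}+\Big\|\frac{|\xi|^{7/4}}{g(\xi)}\widehat u\Big\|_{L^{2}}^{2}=N^{2}\|u\|_{L^{2}}^{2}+\|\mathcal Lu\|_{L^{2}}^{2}.
\]
With Young's inequality this yields $2\|w\|_{L^{2}}\|\Lambda u\|_{L^{2}}\le\frac12\|\mathcal Lu\|_{L^{2}}^{2}+C(\|u\|_{L^{2}}^{2}+\|w\|_{L^{2}}^{2})$; absorbing the dissipative term, discarding the non-negative terms $2\|w\|_{L^{2}}^{2}$ and $\|\nabla\cdot w\|_{L^{2}}^{2}$, and applying Gronwall gives $\|u(t)\|_{L^{2}}^{2}+\|w(t)\|_{L^{2}}^{2}+\int_{0}^{t}\|\mathcal Lu(\tau)\|_{L^{2}}^{2}\,d\tau\le C(t,u_{0},w_{0})$.

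Finally, to upgrade this to the claimed time-integrability of $\|\Lambda^{r}u\|_{L^{2}}$ for $r\in[0,\frac74)$, I would run the identical frequency split once more: because $\frac72-2r>0$ and $g$ is sub-polynomial, $g^{2}(\tau)\le\tau^{7/2-2r}$ for $\tau\ge N_{r}$, so $\|\Lambda^{r}u\|_{L^{2}}^{2}\le N_{r}^{2r}\|u\|_{L^{2}}^{2}+\|\mathcal Lu\|_{L^{2}}^{2}$; integrating in $\tau$ and inserting the bound just obtained closes the argument (the case $\alpha>\frac74$ only makes the exponents more favourable). The only real, if mild, obstacle is this frequency-splitting step: it has to be carried out with nothing but the $L^{2}$ and $\mathcal L$ bounds on $u$, since no control on any Sobolev norm of $w$ is available, and that is precisely what makes the strict inequality $\alpha>1$ indispensable here.
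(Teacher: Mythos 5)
Your proof is correct and follows essentially the same approach as the paper's: an $L^{2}$ energy identity, the Plancherel identity $\langle\mathcal{L}^{2}u,u\rangle=\|\mathcal{L}u\|_{L^{2}}^{2}$, absorption of the $2\|w\|_{L^{2}}\|\Lambda u\|_{L^{2}}$ coupling term using the sub-polynomial growth of $g$, and Gronwall. The only cosmetic difference is that the paper first proves the coercivity inequality $\|\mathcal{L}u\|_{L^{2}}^{2}\geq C_{0}\|\Lambda^{(7-4\sigma)/4}u\|_{L^{2}}^{2}-\widetilde{C}_{0}\|u\|_{L^{2}}^{2}$ and then interpolates $\|\nabla u\|_{L^{2}}$ between $\|u\|_{L^{2}}$ and $\|\Lambda^{(7-4\sigma)/4}u\|_{L^{2}}$, whereas you perform the low/high frequency split on $\|\Lambda u\|_{L^{2}}$ directly; both amount to the same estimate.
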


\begin{proof}
It follows from the first equation of (\ref{log3DMP}) that
\begin{align*} &
\frac{1}{2}\frac{d}{dt}
  ( \|u(t)\|^2_{L^2}+\|w(t)\|_{L^2}^2)+\int_{\mathbb{R}^{3}}{  \mathcal{L}^{2}u\cdot u\,dx}+2\|w\|^2_{L^2}+\|\nabla\cdot w\|^2_{L^2}\nonumber\\
 &= \int_{\mathbb{R}^{3}} \Big( (\nabla \times w)\cdot u+ (\nabla \times u)\cdot w  \Big) dx.
\end{align*}
By Plancherel's theorem,
\begin{eqnarray}
\int_{\mathbb{R}^{3}}{  \mathcal{L}^{2}u\cdot u\,dx}=
\int_{\mathbb{R}^{3}}{ \frac{|\xi|^{\frac{7}{2}}}{g^{2}(|\xi|)}|\widehat{u}(\xi)|^{2}\,d\xi}=\|\mathcal{L}u\|_{L^{2}}^{2}.
\nonumber
\end{eqnarray}
Based on the assumptions on $g$, $g$ grows logarithmically and we conclude that for any fixed $\sigma>0$, there exists $N=N(\sigma)$ satisfying
$$g(r)\leq \widetilde{C}r^{\sigma},\quad \forall\,r\geq N,$$
with some constant $\widetilde{C}=\widetilde{C}(\sigma)$.
Therefore, we have
\begin{align}\label{logtt002}
\|\mathcal{L}u\|_{L^{2}}^{2}&=
\int_{|\xi|< N(\sigma)}{ \frac{|\xi|^{\frac{7}{2}}}{g^{2}(|\xi|)}|\widehat{u}(\xi)|^{2}\,d\xi}
+\int_{|\xi|\geq N(\sigma)}{ \frac{|\xi|^{\frac{7}{2}}}{g^{2}(|\xi|)}|\widehat{u}(\xi)|^{2}\,d\xi}\nonumber\\
&\geq
\int_{|\xi|\geq N(\sigma)}{ \frac{|\xi|^{\frac{7}{2}}}{\big[\widetilde{C}|\xi|^{\sigma}
\big]^{2}}|\widehat{u}(\xi)|^{2}\,d\xi}\nonumber\\
&=
\int_{\mathbb{R}^{3}}{ \frac{|\xi|^{\frac{7}{2}}}{\big[\widetilde{C}|\xi|^{\sigma}
\big]^{2}}|\widehat{u}(\xi)|^{2}\,d\xi}-\int_{|\xi|<N(\sigma)}{ \frac{|\xi|^{\frac{7}{2}}}{\big[\widetilde{C}|\xi|^{\sigma}
\big]^{2}}|\widehat{u}(\xi)|^{2}\,d\xi}
\nonumber\\
&\geq  C_{0}\|\Lambda^{\frac{7-4\sigma}{4}}u\|_{L^{2}}^{2}-
\widetilde{C_{0}}\|u\|_{L^{2}}^{2},
\end{align}
where $C_{0}$ and $\widetilde{C_{0}}$ depend only on $\sigma$. Now if we further fix $\sigma$ satisfying $\sigma\in (0,\frac{3}{4})$, then we get by combining all the  estimates above,
\begin{align*}
 &  \frac{1}{2}\frac{d}{dt}
  ( \|u(t)\|^2_{L^2}+\|w(t)\|_{L^2}^2)
  +\frac{1}{2}\|\mathcal{L}u\|_{L^{2}}^{2}+
  \frac{C_{0}}{2}\|\Lambda^{\frac{7-4\sigma}{4}}u\|_{L^{2}}^{2}+2\|w\|^2_{L^2}+\|\nabla\cdot w\|^2_{L^2}\nonumber\\
 &\leq \widetilde{C_{0}}\|u\|_{L^{2}}^{2}+\int_{\mathbb{R}^{3}} \Big( (\nabla \times w)\cdot u+ (\nabla \times u)\cdot w  \Big) dx\\
 & \leq \widetilde{C_{0}}\|u\|_{L^{2}}^{2}+2 \|\nabla u\|_{L^2}\|w\|_{L^2}
 \\
 & \leq   \widetilde{C_{0}}\|u\|_{L^{2}}^{2}+2 (\|u\|_{L^2}^{\frac{3- 4\sigma}{7-4\sigma}}\|\Lambda^{\frac{7-4\sigma}{4}}u\|_{L^2}
 ^{\frac{4}{7-4\sigma}})\|w\|_{L^2}
 \\
 &\leq \frac{C_{0}}{4}\|\Lambda^{\frac{7-4\sigma}{4}}u\|_{L^{2}}^{2}
 +C(\|u\|^2_{L^2}+\|w\|^2_{L^2}).
\end{align*}
Therefore, for any $\sigma\in (0,\frac{3}{4})$,
\begin{eqnarray}
\frac{d}{dt} (\|u(t)\|^2_{L^2}+\|w(t)\|_{L^2}^2)+ \|\Lambda^{\frac{7-4\sigma}{4}}u\|_{L^{2}}^{2}+\|\mathcal{L}u\|_{L^{2}}^{2} \leq C(\|u\|^2_{L^2}+\|w\|^2_{L^2}).\nonumber
\end{eqnarray}
Using the Gronwall inequality yields
$$\|u(t)\|^2_{L^2}+\|w(t)\|_{L^2}^2
   + \int_{0}^{t}{
(\|\Lambda^{\frac{7-4\sigma}{4}}u(\tau)\|_{L^{2}}^{2}+
\|\mathcal{L}u(\tau)\|_{L^{2}}^{2})\,d\tau} \leq
C(t,\,u_{0},\,w_{0}).$$
The estimate \eqref{logtt001} then follows.
\end{proof}

\vskip .1in
The following estimate plays an important role in proving the main result.
\begin{lemma}\label{loglema2}
Assume $(u_{0},w_{0})$ satisfies the assumptions stated in Theorem \ref{addTh2}.
Then the corresponding solution $(u, w)$
of  \eqref{log3DMP}  admits the following bound for any $t>0$
\begin{align*}
\Big\|\frac{\Lambda^{\frac{3}{4}}}{g(\Lambda)}u(t)\Big\|_{L^{2}}^{2} +\int_{0}^{t}{
\Big\|\frac{\Lambda^{\frac{5}{2}}}{g^{2}(\Lambda)}u(\tau)\Big\|_{L^{2}}^{2}\,d\tau}\leq C(t,\,u_{0},\,w_{0}).
\end{align*}
In particular, there holds for any $\epsilon_{1}\in(0,\,\frac{5}{2})$
\begin{eqnarray}\label{xsxrt8}
\int_{0}^{t}{
\|u(\tau)\|_{H^{\frac{5}{2}-\epsilon_{1}}}^{2}\,d\tau}\leq C(t,\,u_{0},\,w_{0}).
\end{eqnarray}
\end{lemma}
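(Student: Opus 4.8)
The plan is to perform a weighted energy estimate in which the Fourier multiplier $\frac{\Lambda^{3/4}}{g(\Lambda)}$ plays the role of the derivative operator. Concretely, I would apply $\frac{\Lambda^{3/4}}{g(\Lambda)}$ to $\eqref{log3DMP}_1$ and take the $L^2$ inner product with $\frac{\Lambda^{3/4}}{g(\Lambda)}u$. The dissipative term produces $\bigl\|\frac{\Lambda^{3/4}}{g(\Lambda)}\mathcal{L}u\bigr\|_{L^2}^2 = \bigl\|\frac{\Lambda^{3/4+7/4}}{g^{2}(\Lambda)}u\bigr\|_{L^2}^2 = \bigl\|\frac{\Lambda^{5/2}}{g^{2}(\Lambda)}u\bigr\|_{L^2}^2$, which is exactly the good term we want. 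The right-hand side has two contributions: the forcing term $\nabla\times w$ and the nonlinear term $(u\cdot\nabla)u$. Since $g\geq 1$ is non-decreasing, the multiplier $\frac{\Lambda^{3/4}}{g(\Lambda)}$ is bounded on $L^2$ by any positive power of $\Lambda$ of order at least $3/4$; this monotonicity is what lets us trade logarithmic losses freely.

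For the forcing term, I would bound $\bigl|\int \frac{\Lambda^{3/4}}{g(\Lambda)}(\nabla\times w)\cdot\frac{\Lambda^{3/4}}{g(\Lambda)}u\,dx\bigr| \leq C\|\Lambda^{3/4}\nabla w\|_{L^2}\bigl\|\frac{\Lambda^{3/4}}{g^{2}(\Lambda)}u\bigr\|_{L^2}$, and then since $\frac{3}{4}+1 = \frac{7}{4} \le$ the order of the good term, interpolate: $\|\Lambda^{7/4}w\|_{L^2}$ should be controlled by $\|w\|_{L^2}$ (from \eqref{logtt001}) after absorbing a small fraction of the good dissipation — but wait, there is no dissipation on $w$, so instead one uses $\|w\|_{L^2}$ directly together with a low-frequency/high-frequency split, or one simply pairs $\nabla\times w$ against $\frac{\Lambda^{3/2}}{g^2(\Lambda)}u$, putting one derivative on $u$ and keeping $w$ in $L^2$. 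That is, $\bigl|\int(\nabla\times w)\cdot\frac{\Lambda^{3/2}}{g^2(\Lambda)}u\,dx\bigr| \le \|w\|_{L^2}\bigl\|\frac{\Lambda^{5/2}}{g^2(\Lambda)}u\bigr\|_{L^2} \le \frac{1}{8}\bigl\|\frac{\Lambda^{5/2}}{g^2(\Lambda)}u\bigr\|_{L^2}^2 + C\|w\|_{L^2}^2$, using that $\frac{3}{4}+\frac{3}{4} = \frac{3}{2}$, so $\bigl(\frac{\Lambda^{3/4}}{g(\Lambda)}\bigr)^2 \nabla \sim \frac{\Lambda^{5/2}}{g^2(\Lambda)}$. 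The $L^2$-bound on $w$ from Lemma~\ref{loglema1} closes this piece.

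The genuinely delicate term is the nonlinear one, $-\int \bigl[\frac{\Lambda^{3/4}}{g(\Lambda)}, u\cdot\nabla\bigr]u \cdot \frac{\Lambda^{3/4}}{g(\Lambda)}u\,dx$ (after using $\nabla\cdot u=0$ to discard the non-commutator part). I expect this to be the main obstacle. One needs a commutator estimate for the multiplier $\frac{\Lambda^{3/4}}{g(\Lambda)}$, which is not quite a pure fractional derivative; however, because $g$ is radial, non-decreasing and grows at most like $\tau^\sigma$ for every $\sigma>0$, the symbol $\frac{|\xi|^{3/4}}{g(|\xi|)}$ behaves essentially like $|\xi|^{3/4}$ up to harmless logarithmic corrections, and a Kato–Ponce-type bound of the form $\bigl\|[\frac{\Lambda^{3/4}}{g(\Lambda)}, u\cdot\nabla]u\bigr\|_{L^2} \le C\|\nabla u\|_{L^{p}}\bigl\|\frac{\Lambda^{3/4}}{g(\Lambda)}u\bigr\|_{L^{q}}$ can be invoked or derived. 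Then one estimates $\|\nabla u\|_{L^p}$ and $\bigl\|\frac{\Lambda^{3/4}}{g(\Lambda)}u\bigr\|_{L^q}$ by Gagliardo–Nirenberg interpolation between the two quantities under control, namely $\|\Lambda^{r}u\|_{L^2}$ for $r<\frac{7}{4}$ (from \eqref{logtt001}) and the good term $\bigl\|\frac{\Lambda^{5/2}}{g^2(\Lambda)}u\bigr\|_{L^2}$. The choice $\alpha=\frac{7}{4}$ is precisely what makes the interpolation exponents work out so that at most a small multiple of the good term is produced, with the remaining factor being an $L^1_t$ quantity; the logarithmic weights are absorbed using the growth bound on $g$ (choosing the auxiliary $\sigma$ small enough). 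Collecting everything gives a differential inequality $\frac{d}{dt}\bigl\|\frac{\Lambda^{3/4}}{g(\Lambda)}u\bigr\|_{L^2}^2 + \bigl\|\frac{\Lambda^{5/2}}{g^2(\Lambda)}u\bigr\|_{L^2}^2 \le C\bigl(1+\Phi(t)\bigr)\bigl\|\frac{\Lambda^{3/4}}{g(\Lambda)}u\bigr\|_{L^2}^2 + C\|w\|_{L^2}^2$ with $\Phi\in L^1([0,T])$, and Gronwall finishes the proof. Finally, \eqref{xsxrt8} follows by another high/low frequency split: on low frequencies $u\in L^\infty_t L^2$ suffices, and on high frequencies $\|\Lambda^{5/2-\epsilon_1}u\|_{L^2} \le C\bigl\|\frac{\Lambda^{5/2}}{g^2(\Lambda)}u\bigr\|_{L^2}$ because $g^2(|\xi|)\lesssim |\xi|^{\epsilon_1}$ for $|\xi|$ large, for any fixed $\epsilon_1>0$.
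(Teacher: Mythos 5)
Your weighted-energy framework, the self-correction on the forcing term (pairing $\nabla\times w$ against $\frac{\Lambda^{3/2}}{g^{2}(\Lambda)}u$ so that $w$ stays in $L^{2}$), and the high/low split for \eqref{xsxrt8} all match the paper. The real divergence is the nonlinear term, and there your sketch has a gap. You propose to discard the transport part by $\nabla\cdot u=0$ and then invoke a Kato--Ponce-type bound on $\bigl[\tfrac{\Lambda^{3/4}}{g(\Lambda)},\,u\cdot\nabla\bigr]u$, saying this ``can be invoked or derived''. But the Kato--Ponce commutator theorem is stated for $\Lambda^{s}$, not for the weighted multiplier $\tfrac{\Lambda^{3/4}}{g(\Lambda)}$; establishing it for such logarithmically corrected symbols is a nontrivial auxiliary lemma in its own right, and it is precisely the step you do not prove. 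Moreover, the interpolation discussion is vague: to close a Gronwall inequality with coefficient in $L^{1}_{t}$ at the threshold $\alpha=\tfrac{7}{4}$, the product of the two factors in the commutator bound has to land on controlled quantities (something like $\|\nabla u\|_{L^{3}}\|\mathcal{L}u\|_{L^{2}}$ does work, but your choice of exponents is not pinned down, and a careless choice such as $p=q=4$ leaves you needing $\|\Lambda^{7/4}u\|_{L^{2}}$, which is \emph{not} controlled --- only $\|\tfrac{\Lambda^{7/4}}{g(\Lambda)}u\|_{L^{2}}$ is).

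The paper avoids both issues entirely. It pairs the equation with $\tfrac{\Lambda^{3/2}}{g^{2}(\Lambda)}u$, writes $(u\cdot\nabla)u=\nabla\cdot(u\otimes u)$, integrates the divergence by parts onto the weighted test function, and bounds
$\bigl|\int (u\cdot\nabla u)\cdot\tfrac{\Lambda^{3/2}}{g^{2}(\Lambda)}u\,dx\bigr|\le C\|u\|_{L^{4}}^{2}\bigl\|\tfrac{\Lambda^{5/2}}{g^{2}(\Lambda)}u\bigr\|_{L^{2}}$; then $\|u\|_{L^{4}}^{2}\le C\|\Lambda^{1/2}u\|_{L^{2}}\|\Lambda u\|_{L^{2}}$ and the frequency-split inequality \eqref{loglogtt004} turn this into a Gronwall inequality with coefficient $\|\Lambda u\|_{L^{2}}^{2}\in L^{1}_{t}$, with no commutator of any kind. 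This is genuinely more elementary: all the $g$-weight is kept on the good dissipative side, so there is nothing to interpolate against. If you want to keep your commutator route, you would need to prove the weighted Kato--Ponce estimate and then pick the Lebesgue exponents so that one factor lands on $\|\mathcal{L}u\|_{L^{2}}$ (retaining the $g$ in the denominator) and the other on a subcritical Sobolev norm of $u$; as written, your proof does neither.
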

 \begin{proof}
Taking the inner product of  $\eqref{log3DMP}_{1}$ with $\frac{\Lambda^{\frac{3}{2}}}{g^{2}(\Lambda)}u$, we have
\begin{align}
 \frac{1}{2}\frac{d}{dt} \Big\|\frac{\Lambda^{\frac{3}{4}}}{g(\Lambda)}u(t)\Big\|_{L^{2}}^{2}+
\Big\|\frac{\Lambda^{\frac{5}{2}}}{g^{2}(\Lambda)}u\Big\|_{L^{2}}^{2} =&
-\int_{\mathbb{R}^{3}}{ (u \cdot \nabla u)\cdot \frac{\Lambda^{\frac{3}{2}}}{g^{2}(\Lambda)}u\,dx}\nonumber\\& +\int_{\mathbb{R}^{3}}{ \nabla\times w\cdot \frac{\Lambda^{\frac{3}{2}}}{g^{2}(\Lambda)}u\,dx}.\nonumber
\end{align}
The Young inequality ensures
\begin{align}
\Big|\int_{\mathbb{R}^{3}}{ \nabla\times w\cdot \frac{\Lambda^{\frac{3}{2}}}{g^{2}(\Lambda)}u\,dx}\Big|&\leq  \|w\|_{L^{2}}\Big\|\nabla \frac{\Lambda^{\frac{3}{2}}}{g^{2}(\Lambda)}u\Big\|_{L^{2}}\nonumber\\
&\leq \frac{1}{16}\Big\|\frac{\Lambda^{\frac{5}{2}}}{g^{2}(\Lambda)}u\Big\|_{L^{2}}^{2}+C\|w\|_{L^{2}}^{2}.\nonumber
\end{align}
Following the proof of \eqref{logtt002}, it is not difficult to check that for any $\epsilon\in (0,\,\frac{3}{4})$,
\begin{eqnarray}\label{loglogtt004}
\Big\|\frac{\Lambda^{\frac{3}{4}}}{g(\Lambda)}u\Big\|_{L^{2}}^{2}\geq C_{1}\|\Lambda^{\frac{3}{4}-\epsilon}u\|_{L^{2}}^{2}
-C_{2}\|u\|_{L^{2}}^{2}.
\end{eqnarray}
In view of the fact $\nabla\cdot u=0$ and the Gagliardo-Nirenberg inequality, we infer that
\begin{align}
\Big|\int_{\mathbb{R}^{3}}{ (u \cdot \nabla u)\cdot \frac{\Lambda^{\frac{3}{2}}}{g^{2}(\Lambda)}u\,dx}\Big|\leq& C\|uu\|_{L^{2}}\Big\|\nabla \frac{\Lambda^{\frac{3}{2}}}{g^{2}(\Lambda)}u\Big\|_{L^{2}}\nonumber\\
\leq&  C\|u\|_{L^{4}}^{2}\Big\| \frac{\Lambda^{\frac{5}{2}}}{g^{2}(\Lambda)}u\Big\|_{L^{2}}
\nonumber\\
\leq & C\|\Lambda^{\frac{1}{2}}u\|_{L^{2}}\|\Lambda u\|_{L^{2}}\Big\| \frac{\Lambda^{\frac{5}{2}}}{g^{2}(\Lambda)}u\Big\|_{L^{2}}\nonumber\\
\leq &\frac{1}{16}\Big\| \frac{\Lambda^{\frac{5}{2}}}{g^{2}(\Lambda)}u\Big\|_{L^{2}}^{2}+C
\|\Lambda^{\frac{1}{2}}u\|_{L^{2}}^{2}\|\Lambda u\|_{L^{2}}^{2}
\nonumber\\
\leq& \frac{1}{16}\Big\| \frac{\Lambda^{\frac{5}{2}}}{g^{2}(\Lambda)}u\Big\|_{L^{2}}^{2}+C
\|u\|_{L^{2}}^{2}\|\Lambda u\|_{L^{2}}^{2}
\nonumber\\& +C\Big\|\frac{\Lambda^{\frac{3}{4}}}{g(\Lambda)}u\Big\|_{L^{2}}^{2}
\|\Lambda u\|_{L^{2}}^{2},\nonumber
\end{align}
where in the last line we have applied \eqref{loglogtt004} with $\epsilon=\frac{1}{4}$.
We thus conclude
\begin{align}
 \frac{d}{dt} \Big\|\frac{\Lambda^{\frac{3}{4}}}{g(\Lambda)}u(t)\Big\|_{L^{2}}^{2} +
\Big\|\frac{\Lambda^{\frac{5}{2}}}{g^{2}(\Lambda)}u\Big\|_{L^{2}}^{2} \leq&
C\|w\|_{L^{2}}^{2}+C
\|u\|_{L^{2}}^{2}\|\Lambda u\|_{L^{2}}^{2}\nonumber\\& +
C\|\Lambda u\|_{L^{2}}^{2}\Big\|\frac{\Lambda^{\frac{3}{4}}}{g(\Lambda)}u\Big\|_{L^{2}}^{2}.\nonumber
\end{align}
The estimate \eqref{logtt001} with $r=1$ gives
\begin{eqnarray}
\|u(t)\|^2_{L^2}+\|w(t)\|_{L^2}^2
   + \int_{0}^{t}{\|\Lambda u(\tau)\|_{L^{2}}^{2}\,d\tau} \leq
C(t,\,u_{0},\,w_{0}).\nonumber
\end{eqnarray}
By the Gronwall inequality,
$$ \Big\|\frac{\Lambda^{\frac{3}{4}}}{g(\Lambda)}u(t)\Big\|_{L^{2}}^{2} +\int_{0}^{t}{
\Big\|\frac{\Lambda^{\frac{5}{2}}}{g^{2}(\Lambda)}u(\tau)\Big\|_{L^{2}}^{2}\,d\tau}\leq C(t,\,u_{0},\,w_{0}).$$
The desired bound \eqref{xsxrt8} can be deduced following the proof of \eqref{logtt002}.
This completes the proof of Lemma \ref{loglema2}.
\end{proof}

The following is our main lemma in the proof of  Theorem \ref{addTh2}.

\begin{lemma}\label{dvefvb21}
Assume $(u_{0},w_{0})$ satisfies the assumptions stated in Theorem \ref{Th1}.
Then the corresponding solution $(u, w)$
of  \eqref{log3DMP} admits the following bound for any $t>0$
\begin{eqnarray}\label{cber75b9xc}
\|\Lambda^{k_{1}}u(t)\|_{L^{2}}^{2}
+\|\Lambda^{\widetilde{\sigma}}w(t)\|_{L^{2}}^{2}
+\int_{0}^{t}{\|\mathcal{L}\Lambda^{k_{1}}
u(\tau)\|_{L^{2}}^{2} \,d\tau}\leq
C(t,\,u_{0},\,w_{0}),
\end{eqnarray}
where $k_{1}< \widetilde{\sigma}+\frac{3}{4}$ for any $\widetilde{\sigma}\in (0,\frac{5}{2})$.
In particular, we have, by taking $k_{1}>\frac{3}{4}$,
 \begin{eqnarray}\label{csdfnmy}
\int_{0}^{t}{
 \|\nabla u
(\tau)\|_{L^{\infty}}^{2} \,d\tau}\leq
C(t,\,u_{0},\,w_{0}).
\end{eqnarray}
Moreover, if one takes $\widetilde{\sigma}\in (\frac{3}{2},\frac{5}{2})$, then
 \begin{eqnarray}\label{cxdf78nmycv}
\|w(t)\|_{L^{\infty}}\leq
C(t,\,u_{0},\,w_{0}).
\end{eqnarray}
\end{lemma}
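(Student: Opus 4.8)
The plan is to close a coupled energy estimate for the pair $(\Lambda^{k_{1}}u,\Lambda^{\widetilde{\sigma}}w)$, in the spirit of Lemma \ref{AZL302}, but now with the \emph{single} dissipative quantity $\|\mathcal{L}\Lambda^{k_{1}}u\|_{L^{2}}^{2}$ and with no smoothing at all for $w$ --- this last feature is what makes the case $\beta=0$ genuinely harder. Applying $\Lambda^{k_{1}}$ to $\eqref{log3DMP}_{1}$ and $\Lambda^{\widetilde{\sigma}}$ to $\eqref{log3DMP}_{2}$, testing with $\Lambda^{k_{1}}u$ and $\Lambda^{\widetilde{\sigma}}w$ and adding, one arrives at
\begin{align*}
&\frac{1}{2}\frac{d}{dt}\big(\|\Lambda^{k_{1}}u\|_{L^{2}}^{2}+\|\Lambda^{\widetilde{\sigma}}w\|_{L^{2}}^{2}\big)+\|\mathcal{L}\Lambda^{k_{1}}u\|_{L^{2}}^{2}+2\|\Lambda^{\widetilde{\sigma}}w\|_{L^{2}}^{2}+\|\Lambda^{\widetilde{\sigma}}\nabla\cdot w\|_{L^{2}}^{2}\\
&=\int_{\mathbb{R}^{3}}\Lambda^{k_{1}}(\nabla\times w)\cdot\Lambda^{k_{1}}u\,dx+\int_{\mathbb{R}^{3}}\Lambda^{\widetilde{\sigma}}(\nabla\times u)\cdot\Lambda^{\widetilde{\sigma}}w\,dx-\int_{\mathbb{R}^{3}}[\Lambda^{k_{1}}, u\cdot\nabla]u\cdot\Lambda^{k_{1}}u\,dx\\
&\quad-\int_{\mathbb{R}^{3}}[\Lambda^{\widetilde{\sigma}}\partial_{x_{i}}, u_{i}]w\,\Lambda^{\widetilde{\sigma}}w\,dx,
\end{align*}
where $\nabla\cdot u=0$ was used in the last integral. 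Two reductions simplify matters. First, it suffices to prove \eqref{cber75b9xc} for $k_{1}$ in a left-neighbourhood of $\widetilde{\sigma}+\frac{3}{4}$ (so in particular $k_{1}>\widetilde{\sigma}-\frac{3}{4}$); for smaller $k_{1}$ the bound follows by interpolating $\|\Lambda^{k_{1}}u\|_{L^{2}}$ and $\|\mathcal{L}\Lambda^{k_{1}}u\|_{L^{2}}$ against \eqref{logtt001}. Second, repeating the frequency-splitting computation behind \eqref{logtt002} with a small exponent $\epsilon_{2}>0$ yields $\|\mathcal{L}\Lambda^{k_{1}}u\|_{L^{2}}^{2}+\|\Lambda^{k_{1}}u\|_{L^{2}}^{2}\gtrsim\|\Lambda^{k_{1}+\frac{7}{4}-\epsilon_{2}}u\|_{L^{2}}^{2}$, so the coercive term (together with the quantity already being estimated) effectively controls $\|\Lambda^{k_{1}+\frac{7}{4}-\epsilon_{2}}u\|_{L^{2}}^{2}$.

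With these in place, the $u$-transport commutator is routine: by the Kato--Ponce inequality, exactly as in \eqref{sdsfew04}, it is bounded by $C\|\nabla u\|_{L^{p}}\|\Lambda^{k_{1}}u\|_{L^{q}}\|\Lambda^{k_{1}}u\|_{L^{2}}$ with $\frac{1}{p}+\frac{1}{q}=\frac{1}{2}$ and $p$ finite, where $\|\nabla u\|_{L^{p}}\lesssim\|u\|_{H^{\frac{5}{2}-\epsilon_{1}}}$ has time-integrable square by \eqref{xsxrt8}, while $\|\Lambda^{k_{1}}u\|_{L^{q}}\lesssim\|\Lambda^{k_{1}+\frac{3}{p}}u\|_{L^{2}}$ is interpolated between $\|\Lambda^{k_{1}}u\|_{L^{2}}$ and $\|\Lambda^{k_{1}+\frac{7}{4}-\epsilon_{2}}u\|_{L^{2}}$ and partly absorbed. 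The two coupling terms are where the arithmetic $k_{1}<\widetilde{\sigma}+\frac{3}{4}$ is spent. In $\int\Lambda^{k_{1}}(\nabla\times w)\cdot\Lambda^{k_{1}}u$ one redistributes derivatives, bounding it by $\|\Lambda^{k_{1}+1-\theta}w\|_{L^{2}}\|\Lambda^{k_{1}+\theta}u\|_{L^{2}}$ with $\theta$ chosen so that $k_{1}+1-\theta\le\widetilde{\sigma}$ and $\theta<\frac{7}{4}-\epsilon_{2}$; such a $\theta$ exists precisely because $k_{1}+1-(\frac{7}{4}-\epsilon_{2})<\widetilde{\sigma}$, i.e. $k_{1}<\widetilde{\sigma}+\frac{3}{4}+\epsilon_{2}$. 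Then $\|\Lambda^{k_{1}+1-\theta}w\|_{L^{2}}\lesssim\|\Lambda^{\widetilde{\sigma}}w\|_{L^{2}}+\|w\|_{L^{2}}$, while $\|\Lambda^{k_{1}+\theta}u\|_{L^{2}}$ is interpolated and absorbed into the dissipation. For $\int\Lambda^{\widetilde{\sigma}}(\nabla\times u)\cdot\Lambda^{\widetilde{\sigma}}w\le\|\Lambda^{\widetilde{\sigma}+1}u\|_{L^{2}}\|\Lambda^{\widetilde{\sigma}}w\|_{L^{2}}$ one uses the interpolation $\|\Lambda^{\widetilde{\sigma}+1}u\|_{L^{2}}^{2}\le C\|u\|_{L^{2}}^{2}+\eta\|\Lambda^{k_{1}+\frac{7}{4}-\epsilon_{2}}u\|_{L^{2}}^{2}$ --- legitimate since $\widetilde{\sigma}+1<k_{1}+\frac{7}{4}-\epsilon_{2}$ in the reduced range --- absorbs the small multiple, and leaves $C\|\Lambda^{\widetilde{\sigma}}w\|_{L^{2}}^{2}$ for the Gronwall step.

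The genuine difficulty is the $w$-transport commutator $-\int[\Lambda^{\widetilde{\sigma}}\partial_{x_{i}}, u_{i}]w\,\Lambda^{\widetilde{\sigma}}w\,dx$. Since $w$ carries no dissipation, not a single ``extra'' derivative may be loaded onto $w$, and since $\|\nabla u\|_{L^{\infty}}$ is not yet available --- it is in fact an output of this lemma, see \eqref{csdfnmy} --- the classical transport estimate is unavailable; every natural commutator bound turns out to be borderline, demanding the (missing) $\|u\|_{H^{5/2}}$ in place of $\|u\|_{H^{\frac{5}{2}-\epsilon_{1}}}$. The remedy is a high--low frequency splitting of $u$ at a level $N$ combined with logarithmic (Brezis--Gallouet type) interpolation inequalities: on $\{|\xi|\le N\}$ one uses $g\ge1$ together with the gained dissipation at level $k_{1}+\frac{7}{4}-\epsilon_{2}$, while on $\{|\xi|>N\}$ one uses $g(\tau)\le\widetilde{C}\tau^{\epsilon_{2}}$, so that the price of trading the missing $\epsilon_{1}$ is only a factor logarithmic in $N$. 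Inserting this converts the energy inequality into a differential inequality of Osgood type, whose integrating factor over $[0,t]$ is finite precisely because $g$ obeys the growth restriction \eqref{logcobd}; this is the single place in the whole argument where \eqref{logcobd} is used. Combining with $\int_{0}^{t}\|u(\tau)\|_{H^{\frac{5}{2}-\epsilon_{1}}}^{2}\,d\tau<\infty$ from \eqref{xsxrt8} and an Osgood (or, in the less delicate ranges, ordinary Gronwall) inequality yields \eqref{cber75b9xc}.

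The two ``in particular'' assertions then follow from Sobolev embedding: \eqref{csdfnmy} from $\|\nabla u\|_{L^{\infty}}\lesssim\|u\|_{L^{2}}+\|\Lambda^{k_{1}+\frac{7}{4}-\epsilon_{2}}u\|_{L^{2}}$ (valid once $k_{1}>\frac{3}{4}$, with $\epsilon_{1},\epsilon_{2}$ small), integrated in time against the gained dissipation; and \eqref{cxdf78nmycv} from $\|w\|_{L^{\infty}}\lesssim\|w\|_{L^{2}}+\|\Lambda^{\widetilde{\sigma}}w\|_{L^{2}}$ for $\widetilde{\sigma}>\frac{3}{2}$. To summarize, the main obstacle is the $w$-commutator: reconciling the complete lack of any regularizing mechanism for $w$ with a dissipation for $u$ that is only logarithmically stronger than $\Lambda^{7/4}$ is what forces the delicate logarithmic/Osgood bookkeeping, and making the exponents balance is exactly why one needs $\alpha=\frac{7}{4}$ together with the restriction \eqref{logcobd} on $g$.
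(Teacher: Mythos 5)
Your proposal correctly identifies the structure of the paper's argument and, importantly, the key obstacle: the $w$-commutator $\int[\Lambda^{\widetilde{\sigma}}\partial_{x_i},u_i]w\,\Lambda^{\widetilde{\sigma}}w\,dx$ forces an $\|\nabla u\|_{L^\infty}$ (and $\|\Lambda^{5/2}u\|_{L^2}$) factor that is exactly critical and not yet controlled, so a Brezis--Gallouet/Osgood mechanism using \eqref{logcobd} is unavoidable. Your treatment of the $u$-transport commutator $K_3$ is a legitimate small departure: instead of bounding it by $\|\nabla u\|_{L^\infty}\|\Lambda^{k_1}u\|_{L^2}^2$ and sending $\|\nabla u\|_{L^\infty}$ through the high-low machinery as the paper does, you close it with $\|\nabla u\|_{L^p}$ ($p$ large, finite) and $\|\Lambda^{k_1}u\|_{L^q}$, invoking \eqref{xsxrt8} and interpolating against the gained dissipation. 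That works and decouples $K_3$ from the Osgood step; the paper just processes $K_3$ and $K_2$ uniformly.

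Where the proposal falls short is in the one place where it needs to be precise: your description of the high-low frequency splitting that handles $\|\nabla u\|_{L^\infty}$ and $\|\Lambda^{5/2}u\|_{L^2}$ has the roles reversed (or conflated with the coercivity bound $\|\mathcal L\Lambda^{k_1}u\|_{L^2}\gtrsim\|\Lambda^{k_1+\frac74-\epsilon_2}u\|_{L^2}-\|u\|_{L^2}$). In the paper the \emph{low}-frequency blocks $l<N$ are traded for $\|\Lambda^{5/2}u/g^2(\Lambda)\|_{L^2}$ at the logarithmic cost $g^2(2^N)\sqrt{N}$ using monotonicity of $g$; the \emph{high}-frequency blocks $l\ge N$ gain a factor $2^{-aN}$ with $a=k_1-\epsilon_2-\tfrac34>0$ against $\|\Lambda^{k_1+\frac74-\epsilon_2}u\|_{L^2}$. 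One then chooses $2^N\approx(e+A(t))^{1/(2a)}$ so that the high-frequency piece gives $B^{1/2}(e+A)^{1/2}$ (absorbable by the dissipation) and the low-frequency piece produces the factor $g^2[(e+A)^{1/(2a)}]\sqrt{\ln(e+A)}(e+A)$, whence the Osgood inequality and \eqref{logcobd} close the estimate. You gesture at this but state the split as ``$g\ge1$ on $|\xi|\le N$'' and ``$g(\tau)\le\widetilde C\tau^{\epsilon_2}$ on $|\xi|>N$'', which describes the coercivity argument rather than the Bernstein/Plancherel splitting, and as written it would not produce the needed decaying factor $2^{-aN}$ on the high modes nor the crucial $g^2(2^N)\sqrt{N}$ prefactor on the low modes. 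The idea is right but the details at the crux are not, and since this is the only nontrivial step (as you correctly note, this is the unique place \eqref{logcobd} enters), the omission is more than cosmetic.

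Two small points. Your claim that the commutator ``demands $\|u\|_{H^{5/2}}$'' understates the obstruction slightly: the Kato--Ponce bound for $K_2$ produces $\|\nabla u\|_{L^\infty}$, which is supercritical ($H^{5/2+}$), not merely $H^{5/2}$, precisely because $w$ carries no regularity to trade against a finite Lebesgue exponent on $\nabla u$. The final sentence should also say $\alpha\ge\frac74$ rather than $\alpha=\frac74$; the proof specializes to the endpoint only for convenience.
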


\begin{proof}
Applying $\Lambda^{\widetilde{\sigma}}$ to $ \eqref{log3DMP}_{2}$ and taking the
inner product with $\Lambda^{\widetilde{\sigma}}w$ yield
\begin{align} &
\frac{1}{2}\frac{d}{dt}\|\Lambda^{\widetilde{\sigma}}w(t)\|_{L^{2}}^{2}
+2\|\Lambda^{\widetilde{\sigma}}w\|^2_{L^2}+\|\Lambda^{\widetilde{\sigma}}\nabla\cdot w\|_{L^{2}}^{2}\nonumber\\&= \int_{\mathbb{R}^{3}}
\Lambda^{\widetilde{\sigma}}\big(\nabla\times u\big)\Lambda^{\widetilde{\sigma}}w\,dx -\int_{\mathbb{R}^{3}}
[\Lambda^{\widetilde{\sigma}}, u \cdot
\nabla] w\Lambda^{\widetilde{\sigma}}w\,dx\nonumber\\
&:= K_{1}+K_{2}.\nonumber
\end{align}
Applying $\Lambda^{k_{1}}$ to the system $ \eqref{log3DMP}_{1}$
and taking the $L^{2}$ inner product with $\Lambda^{k_{1}}u$, we have
\begin{align}
& \frac{1}{2}\frac{d}{dt}\|\Lambda^{k_{1}}u(t)\|_{L^{2}}^{2}
+\|\mathcal{L}\Lambda^{k_{1}}
u\|_{L^{2}}^{2}\nonumber\\
&=  -\int_{\mathbb{R}^{3}}{[\Lambda^{k_{1}},
u\cdot\nabla]u\cdot \Lambda^{k_{1}}u\,dx}+\int_{\mathbb{R}^{3}}{\Lambda^{k_{1}}(\nabla\times w)
\Lambda^{k_{1}}u\,dx}\nonumber\\
&:=  K_{3}+K_{4}.\nonumber
\end{align}
First, we deduce from the proof of \eqref{logtt002} that for any $\epsilon_{2}\in (0,\,k_{1}+\frac{7}{4})$,
\begin{align*}
\|\mathcal{L}\Lambda^{k_{1}}
u\|_{L^{2}}^{2}\geq C_{3}\|\Lambda^{k_{1}+\frac{7}{4}-\epsilon_{2}}
u\|_{L^{2}}^{2}
-C_{4}\|u\|_{L^{2}}^{2}.
\end{align*}
It thus follows from the Gagliardo-Nirenberg inequality that
\begin{align}
K_{1}&\leq C\|\Lambda^{\widetilde{\sigma}+1}u\|_{L^{2}}
\|\Lambda^{\widetilde{\sigma}}w \|_{L^{2}}\nonumber\\
&\leq C(\|u\|_{L^{2}}+\|\mathcal{L}\Lambda^{k_{1}}
u\|_{L^{2}})
\|\Lambda^{\widetilde{\sigma}}w \|_{L^{2}}\nonumber\\
&\leq \frac{1}{16}\|\mathcal{L}\Lambda^{k_{1}}
u\|_{L^{2}}^{2}+C
\|\Lambda^{\widetilde{\sigma}}w \|_{L^{2}}^{2}+C\|u\|_{L^{2}}^{2},\nonumber
\end{align}
where $\widetilde{\sigma}-\frac{3}{4}+\epsilon_{2}\leq k_{1}$.
According to $\nabla\cdot u=0$ and the Kato-Ponce inequality, we obtain, for any $\widetilde{\sigma}<\frac{3}{2}$,
\begin{align}
K_{2} =&-\int_{\mathbb{R}^{3}}
[\Lambda^{\widetilde{\sigma}}\partial_{x_{i}}, u_{i}
] w\Lambda^{\widetilde{\sigma}}w\,dx
\nonumber\\
 \leq&\|[\Lambda^{\widetilde{\sigma}}\partial_{x_{i}}, u_{i}
] w \|_{L^{2}}\|\Lambda^{\widetilde{\sigma}}w\|_{L^{2}}
\nonumber\\
 \leq&C(\|\nabla u \|_{L^{\infty}}\|\Lambda^{\widetilde{\sigma}}w\|_{L^{2}}+
\|\Lambda^{\widetilde{\sigma}+1}u\|_{L^{\frac{3}{\widetilde{\sigma}}}}
\|w\|_{L^{\frac{6}{3-2\widetilde{\sigma}}}})\|\Lambda^{\widetilde{\sigma}}w\|_{L^{2}}
\nonumber\\
 \leq&
C(\|\nabla u \|_{L^{\infty}}\|\Lambda^{\widetilde{\sigma}}w\|_{L^{2}}+
\|\Lambda^{\frac{5}{2}}u\|_{L^{2}}
\|\Lambda^{\widetilde{\sigma}}w\|_{L^{2}})\|\Lambda^{\widetilde{\sigma}}w\|_{L^{2}}
\nonumber\\
 \leq&C(\|\nabla u \|_{L^{\infty}}+\|\Lambda^{\frac{5}{2}}u\|_{L^{2}})
\|\Lambda^{\widetilde{\sigma}}w\|_{L^{2}}^{2}.\nonumber
\end{align}
For any $\widetilde{\sigma}\in(1,\,\frac{5}{2})$, $K_2$ can  be bounded by
\begin{align}
K_{2} \leq&\|[\Lambda^{\widetilde{\sigma}}, u\cdot\nabla
] w \|_{L^{2}}\|\Lambda^{\widetilde{\sigma}}w\|_{L^{2}}
\nonumber\\
 \leq&C(\|\nabla u \|_{L^{\infty}}\|\Lambda^{\widetilde{\sigma}}w\|_{L^{2}}+
\|\Lambda^{\widetilde{\sigma}}u\|_{L^{\frac{3}{\widetilde{\sigma}-1}}}
\|\nabla w\|_{L^{\frac{6}{5-2\widetilde{\sigma}}}})\|\Lambda^{\widetilde{\sigma}}w\|_{L^{2}}
\nonumber\\
 \leq&
C(\|\nabla u \|_{L^{\infty}}\|\Lambda^{\widetilde{\sigma}}w\|_{L^{2}}+
\|\Lambda^{\frac{5}{2}}u\|_{L^{2}}
\|\Lambda^{\widetilde{\sigma}}w\|_{L^{2}})\|\Lambda^{\widetilde{\sigma}}w\|_{L^{2}}
\nonumber\\
 \leq&C(\|\nabla u \|_{L^{\infty}}+\|\Lambda^{\frac{5}{2}}u\|_{L^{2}})
\|\Lambda^{\widetilde{\sigma}}w\|_{L^{2}}^{2}.\nonumber
\end{align}
There,  for any $\widetilde{\sigma}\in(0,\,\frac{5}{2})$,
\begin{eqnarray}
K_{2}\leq
C(\|\nabla u \|_{L^{\infty}}+\|\Lambda^{\frac{5}{2}}u\|_{L^{2}})
\|\Lambda^{\widetilde{\sigma}}w\|_{L^{2}}^{2}.\nonumber
\end{eqnarray}
By the Kato-Ponce inequality, we obtain
\begin{align}\label{opmdwzv78}
K_{3} \leq& C\|[\Lambda^{k_{1}},
u\cdot\nabla]u\|_{L^{2}}\|\Lambda^{k_{1}}u\|_{L^{2}}
\nonumber\\ \leq&
C\|\nabla u\|_{L^{\infty}}
\|\Lambda^{k_{1}}u\|_{L^{2}}^{2}.
\end{align}
By the Gagliardo-Nirenberg inequality, one has
\begin{align}
K_{4} \leq&C\|\Lambda^{1+2k_{1}-\widetilde{\sigma}}u\|_{L^{2}}
\|\Lambda^{\widetilde{\sigma}}w \|_{L^{2}}\nonumber\\
 \leq&
C\|u\|_{H^{k_{1}+\frac{7}{4}-\epsilon_{2}}}
\|\Lambda^{\widetilde{\sigma}}w \|_{L^{2}}
\nonumber\\
 \leq&
C(\|\mathcal{L}\Lambda^{k_{1}}
u\|_{L^{2}}+\|u\|_{L^2})
\|\Lambda^{\widetilde{\sigma}}w \|_{L^{2}}
\nonumber\\
 \leq& \frac{1}{16}\|\mathcal{L}\Lambda^{k_{1}}
u\|_{L^{2}}^{2}+C
\|\Lambda^{\widetilde{\sigma}}w \|_{L^{2}}^{2}+C\|u\|_{L^{2}}^{2},\nonumber
\end{align}
where $k_{1}<\widetilde{\sigma}+\frac{3}{4}-\epsilon_{2}$.
Collecting all the estimates above yields
\begin{align} &
 \frac{d}{dt}(\|\Lambda^{\widetilde{\sigma}}w(t)\|_{L^{2}}^{2}
 +\|\Lambda^{k_{1}}u(t)\|_{L^{2}}^{2})+\|\Lambda^{k_{1}+\frac{7}{4}-\epsilon_{2}}
u\|_{L^{2}}^{2}
\nonumber\\&\leq  C(1+\|\nabla u \|_{L^{\infty}}+\|\Lambda^{\frac{5}{2}}u\|_{L^{2}})
(\|\Lambda^{\widetilde{\sigma}}w\|_{L^{2}}^{2}+\|\Lambda^{k_{1}}u\|_{L^{2}}^{2})
+C\|u\|_{L^{2}}^{2}.\nonumber
\end{align}
If we set
$$A(t):=\|\Lambda^{\widetilde{\sigma}}w(t)\|_{L^{2}}^{2}
 +\|\Lambda^{k_{1}}u(t)\|_{L^{2}}^{2},\quad B(t):=\|\Lambda^{k_{1}+\frac{7}{4}-\epsilon_{2}}
u(t)\|_{L^{2}}^{2}+\|\mathcal{L}\Lambda^{k_{1}}
u(t)\|_{L^{2}}^{2},$$
$$ f(t):=C+C\|u\|_{L^{2}}^{2},$$
then
\begin{eqnarray} \label{ccxdwkl2}
 \frac{d}{dt}A(t)+B(t)
 \leq    C(1+\|\nabla u \|_{L^{\infty}}+\|\Lambda^{\frac{5}{2}}u\|_{L^{2}})A(t)
+f(t).
\end{eqnarray}
We now bound $\|\na u\|_{L^\infty}$ via the high-low frequency technique (see, e.g.,
\cite{wujmfm}).  By the Littlewood-Paley decomposition  (see Appendix for details),
\begin{eqnarray}
\|\nabla u\|_{L^{\infty}}\leq \|\Delta_{-1}\nabla u\|_{L^{\infty}}+\sum_{l=0}^{N-1}\|\Delta_{l}\nabla u\|_{L^{\infty}}+\sum_{l=N}^{\infty}\|\Delta_{l}\nabla u\|_{L^{\infty}}.\nonumber
\end{eqnarray}
By the Bernstein inequality (see Lemma \ref{vfgty8xc}), one has
$$\|\Delta_{-1}\nabla u\|_{L^{\infty}}\leq C\|u\|_{L^{2}}$$
and
\begin{align}\label{ctrflkkm2}
\sum_{l=N}^{\infty}\|\Delta_{l}\nabla u\|_{L^{\infty}} \leq&
 C \sum_{l=N}^{\infty}2^{\frac{5}{2}l}\|\Delta_{l}u\|_{L^{2}}\nonumber\\ =&
C \sum_{l=N}^{\infty}2^{l(\epsilon_{2}+\frac{3}{4}-k_{1})}\|\Delta_{l}\Lambda^{k_{1}
+\frac{7}{4}-\epsilon_{2}}u\|_{L^{2}}\nonumber\\ \leq& C2^{N(\epsilon_{2}+\frac{3}{4}-k_{1})}\|\Lambda^{k_{1}
+\frac{7}{4}-\epsilon_{2}}u\|_{L^{2}},
\end{align}
where we have used $k_{1}>\epsilon_{2}+\frac{3}{4}$.
According to the Bernstein inequality again and the Plancherel theorem, the middle term can be handled as follows,
\begin{align}
\sum_{l=0}^{N-1}\|\Delta_{l}\nabla u\|_{L^{\infty}} \leq C &\sum_{l=0}^{N-1}2^{\frac{5}{2}l}\|\Delta_{l} u\|_{L^{2}}\leq
C \sum_{l=0}^{N-1} \|\Delta_{l} \Lambda^{\frac{5}{2}}u\|_{L^{2}}\nonumber\\
 \leq &
C \sum_{l=0}^{N-1} \|\varphi(2^{-l}\xi) |\xi|^{\frac{5}{2}}\widehat{u}(\xi)\|_{L^{2}}\nonumber\\
 =&C\sum_{l=0}^{N-1} \Big\|\varphi(2^{-l}\xi)
g^{2}(|\xi|)
\frac{|\xi|^{\frac{5}{2}}}{ g^{2}(|\xi|) }\widehat{u}(\xi)\Big\|_{L^{2}}
\nonumber\\
 \leq &C\sum_{l=0}^{N-1} g^{2}(2^{l}) \Big\|\frac{|\xi|^{\frac{5}{2}}}{g^{2}(|\xi|) }\widehat{\Delta_{l}u}(\xi)\Big\|_{L^{2}}
\nonumber\\
 \leq &C\Big(\sum_{l=0}^{N-1} g^{4}(2^{l})\Big)^{\frac{1}{2}}\left(\sum_{l=0}^{N-1}
\Big\|\frac{|\xi|^{\frac{5}{2}}}{ g^{2}(|\xi|)}\widehat{\Delta_{l}u}(\xi)\Big\|_{L^{2}}^{2}\right)^{\frac{1}{2}}
\nonumber\\
\leq &Cg^{2}(2^{N})\Big(\sum_{l=1}^{N-1} 1\Big)^{\frac{1}{2}} \Big\|\frac{\Lambda^{\frac{5}{2}}}{ g^{2}(\Lambda)}u \Big\|_{L^{2}}
\nonumber\\
\leq &Cg^{2}(2^{N})\sqrt{N} \Big\|\frac{\Lambda^{\frac{5}{2}}}{ g^{2}(\Lambda)}u \Big\|_{L^{2}},\nonumber
\end{align}
where we used the fact that $g$ is a non-decreasing function and $\phi(2^{-l} \xi)$ denotes the symbol of the operator $\Delta_l$ (see Appendix).
Summarizing the above estimates implies
\begin{eqnarray}
\|\nabla u\|_{L^{\infty}}\leq C\|u\|_{L^{2}}+Cg^{2}(2^{N})\sqrt{N} \Big\|\frac{\Lambda^{\frac{5}{2}}}{ g^{2}(\Lambda)}u \Big\|_{L^{2}}+C2^{N(\epsilon_{2}+\frac{3}{4}-k_{1})}\|\Lambda^{k_{1}
+\frac{7}{4}-\epsilon_{2}}u\|_{L^{2}}.\nonumber
\end{eqnarray}
By the same argument, we have
\begin{eqnarray}
\|\Lambda^{\frac{5}{2}}u\|_{L^{2}}\leq C\|u\|_{L^{2}}+Cg^{2}(2^{N})\sqrt{N} \Big\|\frac{\Lambda^{\frac{5}{2}}}{ g^{2}(\Lambda)}u \Big\|_{L^{2}}+C2^{N(\epsilon_{2}+\frac{3}{4}-k_{1})}\|\Lambda^{k_{1}
+\frac{7}{4}-\epsilon_{2}}u\|_{L^{2}}.\nonumber
\end{eqnarray}
The estimate for $\|\Lambda^{\frac{5}{2}}u\|_{L^{2}}$ can be refined to
\begin{eqnarray}\label{qazxdk90hj}
\|\Lambda^{\frac{5}{2}}u\|_{L^{2}}\leq Cg^{2}(2^{N})  \Big\|\frac{\Lambda^{\frac{5}{2}}}{ g^{2}(\Lambda)}u \Big\|_{L^{2}}+C2^{N(\epsilon_{2}+\frac{3}{4}-k_{1})}\|\Lambda^{k_{1}
+\frac{7}{4}-\epsilon_{2}}u\|_{L^{2}}.
\end{eqnarray}
The above estimate \eqref{qazxdk90hj} can be established by invoking  \eqref{ctrflkkm2},
\begin{align}
\|\Lambda^{\frac{5}{2}}u\|_{L^{2}} \leq& \|S_{N}\Lambda^{\frac{5}{2}}u\|_{L^{2}}+\sum_{l=0}^{N-1}
\|\Delta_{l}\Lambda^{\frac{5}{2}}u\|_{L^{2}}
\nonumber\\ \leq& C\left\|\chi(2^{-N}\xi)|\xi|^{\frac{5}{2}}\widehat{u}(\xi)\right\|_{L^{2}}
+C\sum_{l=0}^{N-1}
2^{\frac{5}{2}l}
\|\Delta_{l}u\|_{L^{2}}
\nonumber\\ \leq& C\left\|\chi(2^{-N}\xi)g^{2}(|\xi|)\frac{|\xi|^{\frac{5}{2}}\widehat{u}(\xi)}{g^{2}(|\xi|)}\right\|_{L^{2}}
+C2^{N(\epsilon_{2}+\frac{3}{4}-k_{1})}\|\Lambda^{k_{1}
+\frac{7}{4}-\epsilon_{2}}u\|_{L^{2}}
\nonumber\\ \leq& C g^{2}(2^{N})\left\|\frac{|\xi|^{\frac{5}{2}}\widehat{u}(\xi)}{g^{2}(|\xi|)}
\right\|_{L^{2}}
+C2^{N(\epsilon_{2}+\frac{3}{4}-k_{1})}\|\Lambda^{k_{1}
+\frac{7}{4}-\epsilon_{2}}u\|_{L^{2}}
\nonumber\\ \leq&
Cg^{2}(2^{N})  \Big\|\frac{\Lambda^{\frac{5}{2}}}{ g^{2}(\Lambda)}u \Big\|_{L^{2}}+C2^{N(\epsilon_{2}+\frac{3}{4}-k_{1})}\|\Lambda^{k_{1}
+\frac{7}{4}-\epsilon_{2}}u\|_{L^{2}},\nonumber
\end{align}
where the low frequency cutoff operator $S_{N}$ is given in the first appendix.
By \eqref{ccxdwkl2},
\begin{align}
 \frac{d}{dt}A(t)+B(t)
  \leq&   CA(t)+Cg^{2}(2^{N})\sqrt{N} \Big\|\frac{\Lambda^{\frac{5}{2}}}{ g^{2}(\Lambda)}u \Big\|_{L^{2}}A(t)\nonumber\\
&   +\, C2^{N(\epsilon_{2}+\frac{3}{4}-k_{1})}B^{\frac{1}{2}}(t)A(t)
+f(t).\nonumber
\end{align}
For simplicity, we denote
$$a:=k_{1}-\epsilon_{2}-\frac{3}{4}>0.$$
Choosing $N$ satisfying
$$2^{N}\approx \left(e+A(t)\right)^{\frac{1}{2a}},$$
we obtain
\begin{align}
 \frac{d}{dt}A(t)+B(t)
  \leq&    C\left(1+\Big\|\frac{\Lambda^{\frac{5}{2}}}{ g^{2}(\Lambda)}u \Big\|_{L^{2}}\right)g^{2}\big[\left(e+A(t)\right)^{\frac{1}{2a}}\big]\sqrt{\ln\big(e+A(t)\big)} \big(e+A(t)\big)\nonumber\\& +CB^{\frac{1}{2}}(t)\big(e+A(t)\big)^{\frac{1}{2}}
+f(t)\nonumber\\
 \leq&    C\left(1+\Big\|\frac{\Lambda^{\frac{5}{2}}}{ g^{2}(\Lambda)}u \Big\|_{L^{2}}\right)g^{2}\big[\left(e+A(t)\right)^{\frac{1}{2a}}\big]\sqrt{\ln\big(e+A(t)\big)} \big(e+A(t)\big)\nonumber\\ &+\frac{1}{2}B(t)+C\big(e+A(t)\big)
+f(t).\nonumber
\end{align}
In particular,
\begin{align}\label{crkop8}
 \frac{d}{dt}A(t)\leq& C\left(1+\Big\|\frac{\Lambda^{\frac{5}{2}}}{ g^{2}(\Lambda)}u \Big\|_{L^{2}}\right)g^{2}\big[\left(e+A(t)\right)^{\frac{1}{2a}}\big]\sqrt{\ln\big(e+A(t)\big)} \big(e+A(t)\big)
\nonumber\\ &+Cf(t).
\end{align}
Thanks to
$$g^{2}\big[\left(e+A(t)\right)^{\frac{1}{2a}}\big]\sqrt{\ln\big(e+A(t)\big)} \big(e+A(t)\big)\geq1,$$
we divide \eqref{crkop8} by $g^{2}\big[\left(e+A(t)\right)^{\frac{1}{2a}}\big]\sqrt{\ln\big(e+A(t)\big)} \big(e+A(t)\big)$ and integrating in time to obtain
$$\int_{e+A(0)}^{e+A(t)}\frac{d\tau}{\tau\sqrt{\ln \tau} g^{2}(\tau^{\frac{1}{2a}})}\leq C
\int_{0}^{t}{
\left(1+f(\tau)+\Big\|\frac{\Lambda^{\frac{5}{2}}}{ g^{2}(\Lambda)}u (\tau) \Big\|_{L^{2}}\right)\,d\tau}.$$
Recalling the condition (\ref{logcobd}) on $g$,
$$\int_{e}^{\infty}\frac{d\tau}{\tau\sqrt{\ln \tau} g^{2}(\tau^{\frac{1}{2a}})}=\sqrt{2a}\int_{e^{\frac{1}{2a}}}^{\infty}\frac{d\tau}{\tau\sqrt{\ln \tau} g^{2}(\tau)}=\infty$$
and the bound in Lemma \ref{loglema2},
$$\int_{0}^{t}{
\left(1+f(\tau)+\Big\|\frac{\Lambda^{\frac{5}{2}}}{ g^{2}(\Lambda)}u (\tau) \Big\|_{L^{2}}\right)\,d\tau}\leq C(t,\,u_{0},\,w_{0}),$$
we deduce that
$$A(t)\leq C(t,\,u_{0},\,w_{0}).$$
Returning to \eqref{crkop8}, we also get
$$\int_{0}^{t}{B(\tau)\,d\tau}\leq C(t,\,u_{0},\,w_{0}).$$
Thus, we have
\begin{eqnarray*}
\|\Lambda^{k_{1}}u(t)\|_{L^{2}}^{2}
+\|\Lambda^{\widetilde{\sigma}}w(t)\|_{L^{2}}^{2}
+\int_{0}^{t}{(
 \|\Lambda^{k_{1}+\frac{7}{4}-\epsilon_{2}}u
(\tau)\|_{L^{2}}^{2}
 +\|\mathcal{L}\Lambda^{k_{1}}
u(\tau)\|_{L^{2}}^{2}) \,d\tau}\leq
C(t,\,u_{0},\,w_{0}),
\end{eqnarray*}
which is \eqref{cber75b9xc}. This finishes the proof of Lemma \ref{dvefvb21}.
\end{proof}

\vskip .1in
We are now ready to prove the global $H^s$-bound and thus Theorem \ref{addTh2}.

\begin{proof}[{Proof of Theorem \ref{addTh2}}]
To this end, we recall \eqref{tsdfwef},
\begin{align*} &
\frac{1}{2}\frac{d}{dt}(\|\Lambda^{s} u\|_{L^{2}}^{2}+\|\Lambda^{s}
w\|_{L^{2}}^{2})+\|\mathcal{L}\Lambda^{s}
  u\|_{L^{2}}^{2}+2\|\Lambda^{s}  w\|_{L^{2}}^{2}+\|\Lambda^{s}\nabla\cdot w\|_{L^{2}}^{2}\nonumber\\
&= \int_{\mathbb{R}^{3}}\Big(
\Lambda^{s}(\nabla\times u)\cdot\Lambda^{s} w+\Lambda^{s}
(\nabla\times w)\cdot \,\,\Lambda^{s} u\Big)\,dx
-\int_{\mathbb{R}^{3}}
[\Lambda^{s}, u \cdot \nabla] u \,\Lambda^{s} u\,dx\nonumber\\&\quad-\int_{\mathbb{R}^{3}}
[\Lambda^{s}, u \cdot \nabla]w\, \Lambda^{s} w\,dx\nonumber\\&:= J_{1}+J_{2}+J_{3}.
\end{align*}
According to the proof of \eqref{logtt002}, we have, for any $\epsilon_{3}\in (0,\,\frac{7}{4})$
\begin{eqnarray}\label{dcdfdwwsz34}
\|\mathcal{L}\Lambda^{s}
u\|_{L^{2}}^{2}\geq C_{5}\|\Lambda^{s+\frac{7}{4}-\epsilon_{3}}
u\|_{L^{2}}^{2}
-C_{6}\|\Lambda^{s} u\|_{L^{2}}^{2}.
\end{eqnarray}
By \eqref{dcdfdwwsz34} with $\epsilon_{3}=\frac{3}{4}$, it leads to
\begin{align}
J_{1} \leq&C\|\Lambda^{s+1}u\|_{L^{2}}
\|\Lambda^{s}w \|_{L^{2}}\nonumber\\
 \leq&C
(\|\mathcal{L}\Lambda^{s}
u\|_{L^{2}}+\|\Lambda^{s} u\|_{L^{2}})
\|\Lambda^{s}w \|_{L^{2}}
\nonumber\\
 \leq&
\frac{1}{16}\|\mathcal{L}\Lambda^{s}
  u\|_{L^{2}}^{2}+C(\|\Lambda^{s} u\|_{L^{2}}^{2}+\|\Lambda^{s}
w\|_{L^{2}}^{2}).\nonumber
\end{align}
According to \eqref{opmdwzv78}, one has
\begin{align}\label{tycvdf789}
J_{2}\leq
C\|\nabla u\|_{L^{\infty}}
\|\Lambda^{s}u\|_{L^{2}}^{2}.
\end{align}
The last term $J_{3}$ can be bounded by
\begin{align}\label{tyb3178ht4}
J_{3} =&-\int_{\mathbb{R}^{3}}
[\Lambda^{s}\partial_{x_{i}}, u_{i}
] w\Lambda^{s}w\,dx
\nonumber\\
 \leq&\|[\Lambda^{s}\partial_{x_{i}}, u_{i}] w \|_{L^{2}}\|\Lambda^{s}w\|_{L^{2}}
\nonumber\\
 \leq&C(\|\nabla u \|_{L^{\infty}}\|\Lambda^{s}w\|_{L^{2}}+
\|\Lambda^{s+1}u\|_{L^{2}}
\|w\|_{L^{\infty}})\|\Lambda^{s}w\|_{L^{2}}
\nonumber\\
 \leq&C\left(\|\nabla u \|_{L^{\infty}}\|\Lambda^{s}w\|_{L^{2}}+
(\|\mathcal{L}\Lambda^{s}
u\|_{L^{2}}+\|\Lambda^{s} u\|_{L^{2}})\|w\|_{L^{\infty}}\right)\|\Lambda^{s}w\|_{L^{2}}
\nonumber\\ \leq&
\frac{1}{16}\|\mathcal{L}\Lambda^{s}
  u\|_{L^{2}}^{2}+C
  (\|\nabla u \|_{L^{\infty}}+\|w\|_{L^{\infty}}^{2}) (\|\Lambda^{s}u\|_{L^{2}}^{2}+\|\Lambda^{s}w\|_{L^{2}}^{2}).
\end{align}
Combining all the above estimates, we obtain
\begin{align} \label{dfgdfgytr16} &
 \frac{d}{dt}(\|\Lambda^{s} u\|_{L^{2}}^{2}+\|\Lambda^{s}
w\|_{L^{2}}^{2})+\|\mathcal{L}\Lambda^{s}
u\|_{L^{2}}^{2}\nonumber\\&\leq C(1+\|w\|_{L^{\infty}}^{2}+\|\nabla u\|_{L^{\infty}})
(\|\Lambda^{s}u\|_{L^{2}}^{2}+\|\Lambda^{s}w\|_{L^{2}}^{2}).
\end{align}
Recalling \eqref{csdfnmy} and \eqref{cxdf78nmycv}, we conclude via the Gronwall type inequality
$$\|\Lambda^{s} u(t)\|_{L^{2}}^{2}+\|\Lambda^{s}
w(t)\|_{L^{2}}^{2}+\int_{0}^{t}{\|\mathcal{L}\Lambda^{s}
  u(\tau)\|_{L^{2}}^{2}\,d\tau}\leq C(t,\,u_{0},\,w_{0}).$$
This completes the proof of Theorem \ref{addTh2}.
\end{proof}

\vskip .3in
\section{The proof of Theorem \ref{addTh3}}
\label{sec4}
\setcounter{equation}{0}

This section is devoted to the proof of Theorem \ref{addTh3}.
Similarly it suffices to consider the case $\alpha+\beta=\frac{7}{4}$.
The basic $L^2$-estimate follows from Lemma \ref{loglema1}.

\begin{lemma}
Assume $(u_{0},w_{0})$ satisfies the assumptions stated in Theorem \ref{addTh3}.
Then the corresponding solution $(u, w)$
of the system \eqref{loggnDMP} admits the following bound for any $t>0$
\begin{eqnarray} \label{s4logtt001}
\|u(t)\|^2_{L^2}+\|w(t)\|_{L^2}^2
   + \int_{0}^{t}{(\|\mathcal{L}u(\tau)\|_{L^{2}}^{2}
   +\|\Lambda^{\beta}w(\tau)\|_{L^{2}}^{2})
\,d\tau} \leq
C(t,\,u_{0},\,w_{0}).
\end{eqnarray}
\end{lemma}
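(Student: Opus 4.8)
The plan is to run the standard $L^{2}$ energy estimate on \eqref{loggnDMP}, exactly parallel to the proof of Lemma \ref{loglema1}; the only difference is that now there is a genuine dissipation $(-\Delta)^{\beta}w$ in the microrotation equation, which merely contributes the extra nonnegative term $\|\Lambda^{\beta}w\|_{L^{2}}^{2}$ on the left-hand side. Concretely, I would take the $L^{2}$ inner product of $\eqref{loggnDMP}_{1}$ with $u$ and of $\eqref{loggnDMP}_{2}$ with $w$, add them, and integrate by parts. Since $\nabla\cdot u=0$, the transport contributions $\int(u\cdot\nabla)u\cdot u\,dx$ and $\int(u\cdot\nabla)w\cdot w\,dx$ vanish, the pressure drops, and Plancherel's theorem gives $\int_{\mathbb{R}^{3}}\mathcal{L}^{2}u\cdot u\,dx=\|\mathcal{L}u\|_{L^{2}}^{2}$; one is left with
\begin{align*}
&\frac{1}{2}\frac{d}{dt}\big(\|u(t)\|_{L^{2}}^{2}+\|w(t)\|_{L^{2}}^{2}\big)+\|\mathcal{L}u\|_{L^{2}}^{2}+\|\Lambda^{\beta}w\|_{L^{2}}^{2}+2\|w\|_{L^{2}}^{2}+\|\nabla\cdot w\|_{L^{2}}^{2}\\
&\qquad=\int_{\mathbb{R}^{3}}\big((\nabla\times w)\cdot u+(\nabla\times u)\cdot w\big)\,dx=2\int_{\mathbb{R}^{3}}(\nabla\times w)\cdot u\,dx.
\end{align*}

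Next I would control the coupling term. Since $g$ is non-decreasing and grows at most logarithmically, the same Plancherel argument used for \eqref{logtt002} yields, for any fixed $\sigma\in(0,\alpha-1)$, constants $C_{0},\widetilde{C_{0}}>0$ depending on $\sigma$ with $\|\mathcal{L}u\|_{L^{2}}^{2}\geq C_{0}\|\Lambda^{\alpha-\sigma}u\|_{L^{2}}^{2}-\widetilde{C_{0}}\|u\|_{L^{2}}^{2}$; the constraint $\sigma\in(0,\alpha-1)$ is admissible because $\alpha\geq\frac{5}{4}$, and it forces $\alpha-\sigma>1$. Hence $\|\nabla u\|_{L^{2}}$ interpolates between $\|u\|_{L^{2}}$ and $\|\Lambda^{\alpha-\sigma}u\|_{L^{2}}$ (with exponent $\theta=\frac{1}{\alpha-\sigma}\in(0,1)$), so by the Gagliardo--Nirenberg and Young inequalities
$$2\Big|\int_{\mathbb{R}^{3}}(\nabla\times w)\cdot u\,dx\Big|\leq 2\|\nabla u\|_{L^{2}}\|w\|_{L^{2}}\leq \frac{C_{0}}{2}\|\Lambda^{\alpha-\sigma}u\|_{L^{2}}^{2}+C\big(\|u\|_{L^{2}}^{2}+\|w\|_{L^{2}}^{2}\big)\leq \frac{1}{2}\|\mathcal{L}u\|_{L^{2}}^{2}+C\big(\|u\|_{L^{2}}^{2}+\|w\|_{L^{2}}^{2}\big).$$

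Inserting this into the energy identity and discarding the nonnegative terms $2\|w\|_{L^{2}}^{2}$ and $\|\nabla\cdot w\|_{L^{2}}^{2}$ gives
$$\frac{d}{dt}\big(\|u(t)\|_{L^{2}}^{2}+\|w(t)\|_{L^{2}}^{2}\big)+\|\mathcal{L}u\|_{L^{2}}^{2}+\|\Lambda^{\beta}w\|_{L^{2}}^{2}\leq C\big(\|u\|_{L^{2}}^{2}+\|w\|_{L^{2}}^{2}\big),$$
and Gronwall's inequality bounds $\|u(t)\|_{L^{2}}^{2}+\|w(t)\|_{L^{2}}^{2}$ by $C(t,u_{0},w_{0})$; integrating the differential inequality in time then yields the stated control of $\int_{0}^{t}(\|\mathcal{L}u\|_{L^{2}}^{2}+\|\Lambda^{\beta}w\|_{L^{2}}^{2})\,d\tau$. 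There is no real obstacle here — this is the softest estimate of the section. The single point worth care is that the $u$-dissipation is the nonlocal multiplier $\mathcal{L}$ rather than a clean fractional Laplacian, so it cannot be fed directly into an interpolation inequality; that is precisely why the spectral lower bound (already established in the proof of Lemma \ref{loglema1}) is invoked, to trade $\|\mathcal{L}u\|_{L^{2}}$ for $\|\Lambda^{\alpha-\sigma}u\|_{L^{2}}$ up to a harmless $\|u\|_{L^{2}}^{2}$ before closing the coupling term.
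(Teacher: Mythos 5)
Your proof is correct and follows essentially the same route as the paper, which simply notes that this estimate follows from the argument of Lemma \ref{loglema1}: the new term $(-\Delta)^{\beta}w$ only adds the nonnegative quantity $\|\Lambda^{\beta}w\|_{L^{2}}^{2}$ to the left-hand side, and the coupling term is absorbed via the spectral lower bound for $\|\mathcal{L}u\|_{L^{2}}^{2}$ together with interpolation and Gronwall, exactly as you do.
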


\vskip .1in
Our next goal is to improve the regularity of $u$, which can be stated as follows.
\begin{lemma}
Assume $(u_{0},w_{0})$ satisfies the assumptions stated in Theorem \ref{addTh3}.
Then the corresponding solution $(u, w)$
of the system \eqref{loggnDMP} admits the following bound for any $t>0$
\begin{eqnarray} \label{s4logtt002}
\left\|\frac{\Lambda^{\alpha+\beta-1}}{g(\Lambda)}u(t)\right\|^2_{L^2}+\int_{0}^{t}{
\left\|\frac{\Lambda^{2\alpha+\beta-1}}{g^{2}(\Lambda)}u(\tau)\right\|^2_{L^2}\,d\tau} \leq
C(t,\,u_{0},\,w_{0}).
\end{eqnarray}
When  $\alpha+\beta=\frac{7}{4}$,
\begin{eqnarray} \label{s4logtt003}
\left\|\frac{\Lambda^{\frac{3}{4}}}{g(\Lambda)}u(t)\right\|^2_{L^2}+\int_{0}^{t}{
\left\|\frac{\Lambda^{\alpha+\frac{3}{4}}}{g^{2}(\Lambda)}u(\tau)\right\|^2_{L^2}\,d\tau} \leq
C(t,\,u_{0},\,w_{0}).
\end{eqnarray}
In particular, we have for any $\epsilon_{1}\in(0,\,\alpha+\frac{3}{4})$,
\begin{eqnarray} \label{cvfdf0r}
\int_{0}^{t}{
\left\|\Lambda^{\alpha+\frac{3}{4}-\epsilon_{1}}u(\tau)\right\|^2_{L^2}\,d\tau} \leq
C(t,\,u_{0},\,w_{0}).
\end{eqnarray}
\end{lemma}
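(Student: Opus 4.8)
The plan is to mimic the proof of Lemma \ref{addt01}, but since the dissipation is now the weaker multiplier $\mathcal{L}$ rather than a pure fractional Laplacian, I will test the $u$-equation not against $\Lambda^{2(\alpha+\beta-1)}u$ directly but against $\frac{\Lambda^{2(\alpha+\beta-1)}}{g^{2}(\Lambda)}u$, which produces exactly the coercive quantity $\|\frac{\Lambda^{2\alpha+\beta-1}}{g^{2}(\Lambda)}u\|_{L^2}^2$ on the left. So first I would apply the Fourier multiplier $\frac{\Lambda^{\alpha+\beta-1}}{g(\Lambda)}$ to $\eqref{loggnDMP}_1$ and take the $L^2$-inner product with $\frac{\Lambda^{\alpha+\beta-1}}{g(\Lambda)}u$. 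Using Plancherel as in the proof of Lemma \ref{loglema1}, the dissipative term $\langle \mathcal{L}^2 u, \frac{\Lambda^{2(\alpha+\beta-1)}}{g^{2}(\Lambda)}u\rangle$ equals $\|\frac{\Lambda^{2\alpha+\beta-1}}{g^{2}(\Lambda)}u\|_{L^2}^2$ exactly, since the symbols multiply as $\frac{|\xi|^{2\alpha}}{g^2(|\xi|)}\cdot\frac{|\xi|^{2(\alpha+\beta-1)}}{g^2(|\xi|)} = \left(\frac{|\xi|^{2\alpha+\beta-1}}{g^2(|\xi|)}\right)^2$. The identity is the point of this particular weighting.

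Next I would handle the right-hand side. The linear coupling term $\langle \frac{\Lambda^{\alpha+\beta-1}}{g(\Lambda)}(\nabla\times w), \frac{\Lambda^{\alpha+\beta-1}}{g(\Lambda)}u\rangle$ is bounded, after moving one derivative onto $u$ and using that $g\geq 1$, by $\|\Lambda^{\beta}w\|_{L^2}\|\frac{\Lambda^{2\alpha+\beta-1}}{g^2(\Lambda)}u\|_{L^2}$ up to interpolation (exactly the range $\alpha\geq\frac54$ guarantees the exponents on $u$ interpolate between $L^2$ and the coercive norm); absorb into the dissipation and leave $C\|\Lambda^\beta w\|_{L^2}^2$, which is integrable by \eqref{s4logtt001}. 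The nonlinear term requires a commutator. Writing $u\cdot\nabla u = \Dv(u\otimes u)$ by incompressibility and using a Kato--Ponce-type estimate with the weight $\frac{\Lambda^{\alpha+\beta-1}}{g(\Lambda)}$ in place of $\Lambda^{\alpha+\beta-1}$ (legitimate because $\frac{|\xi|^s}{g(|\xi|)}$ is a Mihlin multiplier relative to $|\xi|^s$ up to the harmless factor $g\geq 1$, and $g$ is non-decreasing so it only helps at high frequency), I would bound it by $C\|\nabla u\|_{L^{p_0}}\|\frac{\Lambda^{\alpha+\beta-1}}{g(\Lambda)}u\|_{L^{2p_0/(p_0-2)}}\|\frac{\Lambda^{\alpha+\beta-1}}{g(\Lambda)}u\|_{L^2}$, then Sobolev-embed the middle factor into the coercive norm $\|\frac{\Lambda^{2\alpha+\beta-1}}{g^2(\Lambda)}u\|_{L^2}$ with a fractional power $<1$ (again $\alpha\geq\frac54$ lets the exponent $\frac{3}{\alpha p_0}<1$), and use the uniform control $\|\nabla u\|_{L^{p_0}}\lesssim \|u\|_{H^\alpha}$ which is time-integrable by \eqref{s4logtt001}. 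After Young's inequality this yields $\frac{d}{dt}\|\frac{\Lambda^{\alpha+\beta-1}}{g(\Lambda)}u\|_{L^2}^2 + \|\frac{\Lambda^{2\alpha+\beta-1}}{g^2(\Lambda)}u\|_{L^2}^2 \leq C(1+\|u\|_{H^\alpha}^2)\|\frac{\Lambda^{\alpha+\beta-1}}{g(\Lambda)}u\|_{L^2}^2 + C\|w\|_{H^\beta}^2$, and Gronwall with \eqref{s4logtt001} gives \eqref{s4logtt002}; \eqref{s4logtt003} is the specialization $\alpha+\beta=\frac74$. Finally \eqref{cvfdf0r} follows by the low/high frequency splitting already used to prove \eqref{logtt002}: for $|\xi|\geq N$, $g(|\xi|)\leq \widetilde C|\xi|^\sigma$ with $\sigma$ as small as we like, so $\frac{|\xi|^{\alpha+3/4}}{g^2(|\xi|)} \gtrsim |\xi|^{\alpha+3/4-2\sigma}$ on high frequencies, giving $\|\Lambda^{\alpha+3/4-\epsilon_1}u\|_{L^2}^2 \leq C\|\frac{\Lambda^{\alpha+3/4}}{g^2(\Lambda)}u\|_{L^2}^2 + C\|u\|_{L^2}^2$ for any $\epsilon_1>0$, which is integrable in time.

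The main obstacle I anticipate is making the Kato--Ponce commutator estimate rigorous with the non-standard weight $\frac{\Lambda^{\alpha+\beta-1}}{g(\Lambda)}$: one must check that replacing $\Lambda^{\alpha+\beta-1}$ by this operator does not worsen the estimate, which hinges on $g$ being non-decreasing and $\geq 1$ so that the symbol ratio $g(|\xi|)/g(|\eta|)$ arising in the paraproduct/commutator decomposition stays bounded when $|\xi|\gtrsim|\eta|$ (the only regime that matters after the commutator cancellation). This is where one should be careful rather than cavalier; everything else is a routine adaptation of Lemma \ref{addt01} together with the frequency-splitting device from Lemma \ref{loglema1}.
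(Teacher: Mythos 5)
Your testing function $\frac{\Lambda^{2(\alpha+\beta-1)}}{g^2(\Lambda)}u$ and the resulting coercive term are exactly what the paper uses, and your treatment of the linear coupling is fine. However, the nonlinear-term estimate has a genuine gap, and the symptom is telling: your sketched differential inequality
$$\frac{d}{dt}\Big\|\frac{\Lambda^{\alpha+\beta-1}}{g(\Lambda)}u\Big\|_{L^2}^2 + \Big\|\frac{\Lambda^{2\alpha+\beta-1}}{g^2(\Lambda)}u\Big\|_{L^2}^2 \leq C(1+\|u\|_{H^\alpha}^2)\Big\|\frac{\Lambda^{\alpha+\beta-1}}{g(\Lambda)}u\Big\|_{L^2}^2 + C\|w\|_{H^\beta}^2$$
closes by a standard Gronwall inequality and would therefore prove the lemma for \emph{any} non-decreasing $g\ge1$, with no growth restriction at all. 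But the paper invokes the Osgood hypothesis \eqref{lognver56}, $\int_e^\infty\frac{d\tau}{\tau g^4(\tau)}=\infty$, precisely in the proof of this lemma, so a correct proof cannot avoid it. The reason your version does not actually produce a clean linear Gronwall inequality is the $g$-power mismatch in the interpolation step. Sobolev embedding of your middle factor gives $\big\|\frac{\Lambda^{\alpha+\beta-1}}{g(\Lambda)}u\big\|_{L^{2p_0/(p_0-2)}}\lesssim \big\|\frac{\Lambda^{\alpha+\beta-1+3/p_0}}{g(\Lambda)}u\big\|_{L^2}$, and writing the symbol as an interpolant,
$$\frac{|\xi|^{\alpha+\beta-1+\theta\alpha}}{g(|\xi|)}=\Big(\frac{|\xi|^{\alpha+\beta-1}}{g(|\xi|)}\Big)^{1-\theta}\Big(\frac{|\xi|^{2\alpha+\beta-1}}{g^2(|\xi|)}\Big)^{\theta}\,g(|\xi|)^{\theta},$$
an extra uncontrolled factor $g^\theta(|\xi|)$ appears. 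Since $g\ge1$ and is unbounded, this is a loss, not a gain; it cannot be discarded. So the middle factor is genuinely not dominated by $\|\frac{\Lambda^{\alpha+\beta-1}}{g(\Lambda)}u\|_{L^2}^{1-\theta}\|\frac{\Lambda^{2\alpha+\beta-1}}{g^2(\Lambda)}u\|_{L^2}^{\theta}$, and the differential inequality you write down is not justified.

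The paper's route avoids a commutator entirely: it bounds the nonlinear term by a product estimate, $\big|\int (u\cdot\nabla u)\cdot \frac{\Lambda^{2\alpha+2\beta-2}}{g^2(\Lambda)}u\,dx\big|\lesssim \|\Lambda^\beta(uu)\|_{L^2}\|\frac{\Lambda^{2\alpha+\beta-1}}{g^2(\Lambda)}u\|_{L^2}\lesssim \|\Lambda^{\frac{3+2\beta}{4}}u\|_{L^2}^2\|\frac{\Lambda^{2\alpha+\beta-1}}{g^2(\Lambda)}u\|_{L^2}$, and then estimates $\|\Lambda^{\frac{3+2\beta}{4}}u\|_{L^2}$ by a high/low frequency split at frequency $2^N$. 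On low frequencies the $g$-mismatch is controlled by the explicit factor $g(2^N)$ (times $\|\frac{\Lambda^{\alpha+\beta-1}}{g(\Lambda)}u\|_{L^2}^{1/2}\|\mathcal{L}u\|_{L^2}^{1/2}$), while the high-frequency tail is absorbed using the negative power $2^{-N/4}$. Choosing $2^N\approx e+A(t)$ then yields a superlinear ODE of the form $\frac{d}{dt}A(t)+B(t)\leq C\,g^4(e+A(t))(e+A(t))(1+\|\mathcal{L}u\|_{L^2}^2)+\phi(t)$, which one divides by $(e+A)g^4(e+A)$ and integrates; the hypothesis $\int_e^\infty\frac{d\tau}{\tau g^4(\tau)}=\infty$ is then exactly what prevents blow-up. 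This Osgood step, and the explicit tracking of $g(2^N)$ via the frequency split, is the missing idea in your proposal. Your closing observation about the boundedness of $g(|\xi|)/g(|\eta|)$ in the commutator paraproduct is not the bottleneck; the bottleneck is the interpolation between norms carrying different powers of $g$, and it needs the logarithmic Gronwall machinery rather than the standard one.
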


\begin{proof}
Taking the scalar product of the first equation of \eqref{loggnDMP} with  $\frac{\Lambda^{2\alpha+2\beta-2}}{g^{2}(\Lambda)}u$, we have
\begin{align}
 \frac{1}{2}\frac{d}{dt} \left\|\frac{\Lambda^{\alpha+\beta-1}}{g(\Lambda)}u(t)\right\|^2_{L^2}+
\left\|\frac{\Lambda^{2\alpha+\beta-1}}{g^{2}(\Lambda)}u\right\|^2_{L^2} =&
\int_{\mathbb{R}^{3}}{ \nabla\times w\cdot \frac{\Lambda^{2\alpha+2\beta-2}}{g^{2}(\Lambda)}u\,dx}
\nonumber\\&
-\int_{\mathbb{R}^{3}}{ (u \cdot \nabla u)\cdot \frac{\Lambda^{2\alpha+2\beta-2}}{g^{2}(\Lambda)}u\,dx}
.\nonumber
\end{align}
By the Young inequality,
\begin{align}
\left|\int_{\mathbb{R}^{3}}{ \nabla\times w\cdot \frac{\Lambda^{2\alpha+2\beta-2}}{g^{2}(\Lambda)}u\,dx}\right| \leq& C\|\Lambda^{\beta}w\|_{L^{2}}\left\|\frac{\Lambda^{2\alpha+\beta-1}}
{g^{2}(\Lambda)}u\right\|_{L^2}\nonumber\\ \leq&
\frac{1}{16}\left\|\frac{\Lambda^{2\alpha+\beta-1}}
{g^{2}(\Lambda)}u\right\|_{L^2}^{2}+C\|\Lambda^{\beta}w\|_{L^{2}}^{2}.\nonumber
\end{align}
Following the arguments used in proving \eqref{logtt002}, we deduce for any $r_{1}\in(0,\,\alpha+\beta-1)$ and $r_{2}\in(0,\,2\alpha+\beta-1)$ that
\begin{eqnarray} \label{s4logtt004}
\left\|\frac{\Lambda^{\alpha+\beta-1}}{g(\Lambda)}u\right\|^2_{L^2}\geq
C_{1}\|\Lambda^{r_{1}}u\|_{L^{2}}^2-C_{2}\|u\|_{L^{2}}^2,
\end{eqnarray}
\begin{eqnarray} \label{s4logtt005}
\left\|\frac{\Lambda^{2\alpha+\beta-1}}{g^{2}(\Lambda)}u\right\|^2_{L^2}\geq
C_{3}\|\Lambda^{r_{2}}u\|_{L^{2}}^2-C_{4}\|u\|_{L^{2}}^2.
\end{eqnarray}
By a simple embedding inequality,
\begin{align*}
\left|-\int_{\mathbb{R}^{3}}{ (u \cdot \nabla u)\cdot \frac{\Lambda^{2\alpha+2\beta-2}}{g^{2}(\Lambda)}u\,dx}\right| \leq& C\|\Lambda^{\beta}(uu)\|_{L^{2}}\left\|\frac{\Lambda^{2\alpha+\beta-1}}
{g^{2}(\Lambda)}u\right\|_{L^2}\nonumber\\
 \leq& C\|u\|_{L^{\frac{12}{3-2\beta}}}\|\Lambda^{\beta}u\|_{L^{\frac{12}{3+2\beta}}}\left\|\frac{\Lambda^{2\alpha+\beta-1}}
{g^{2}(\Lambda)}u\right\|_{L^2}\nonumber\\
 \leq& C\|\Lambda^{\beta}u\|_{L^{\frac{12}{3+2\beta}}}^{2}\left\|\frac{\Lambda^{2\alpha+\beta-1}}
{g^{2}(\Lambda)}u\right\|_{L^2}\nonumber\\
\leq& C\|\Lambda^{\frac{3+2\beta}{4}}u\|_{L^{2}}^{2}\left\|\frac{\Lambda^{2\alpha+\beta-1}}
{g^{2}(\Lambda)}u\right\|_{L^2}.
\end{align*}
The high-low frequency technique implies
$$\|\Lambda^{\frac{3+2\beta}{4}}u\|_{L^{2}}\leq \|S_{N}\Lambda^{\frac{3+2\beta}{4}}u\|_{L^{2}}
+\sum_{j=N}^{\infty}\|\Delta_{j}\Lambda^{\frac{3+2\beta}{4}}u\|_{L^{2}}.$$
Thanks to $\alpha\geq\frac{5}{4}$ and the Plancherel theorem, we have
\begin{align}
\|S_{N}\Lambda^{\frac{3+2\beta}{4}}u\|_{L^{2}} \leq& C\|S_{N}\Lambda^{\frac{3+4\alpha}{8}}u\|_{L^{2}}+C\|S_{N}u\|_{L^{2}}\nonumber\\
 \leq& C\|S_{N}\Lambda^{\frac{3}{4}}u\|_{L^{2}}^{\frac{1}{2}}
\|S_{N}\Lambda^{\alpha}u\|_{L^{2}}^{\frac{1}{2}}+C\|u\|_{L^{2}}\nonumber\\
 \leq& C\|\chi(2^{-N}\xi)|\xi|^{\frac{3}{4}}\widehat{u}(\xi)\|_{L^{2}}^{\frac{1}{2}}
\|\chi(2^{-N}\xi)|\xi|^{\alpha}\widehat{u}(\xi)\|_{L^{2}}^{\frac{1}{2}}
+C\|u\|_{L^{2}}
\nonumber\\
 =& C\left\|\chi(2^{-N}\xi)g
(\xi)\frac{|\xi|^{\frac{3}{4}}\widehat{u}(\xi)}{g
(\xi)}\right\|_{L^{2}}^{\frac{1}{2}}
\left\|\chi(2^{-N}\xi)g
(\xi)\frac{|\xi|^{\alpha}\widehat{u}(\xi)}{g
(\xi)}\right\|_{L^{2}}^{\frac{1}{2}}
+C\|u\|_{L^{2}}
\nonumber\\
 \leq& C g(2^{N})\left\|\frac{|\xi|^{\frac{3}{4}}\widehat{u}(\xi)}{g
(\xi)}\right\|_{L^{2}}^{\frac{1}{2}}
\left\|\frac{|\xi|^{\alpha}\widehat{u}(\xi)}{g
(\xi)}\right\|_{L^{2}}^{\frac{1}{2}}
+C\|u\|_{L^{2}}
\nonumber\\
 \leq& C g(2^{N})\left\|\frac{\Lambda^{\alpha+\beta-1}}
{g(\Lambda)}u\right\|_{L^2}^{\frac{1}{2}}
\|\mathcal{L}u\|_{L^{2}}^{\frac{1}{2}}
+C\|u\|_{L^{2}}.\nonumber
\end{align}
The high frequency can be bounded by the Bernstein inequality and \eqref{s4logtt004} along with \eqref{s4logtt005},
\begin{align}
\sum_{j=N}^{\infty}\|\Delta_{j}\Lambda^{\frac{3+2\beta}{4}}u\|_{L^{2}} \leq& C
\sum_{j=N}^{\infty}2^{-\frac{1}{4}j}\|\Delta_{j}\Lambda^{\frac{2+\beta}{2}}
u\|_{L^{2}}\nonumber\\
 \leq& C
\sum_{j=N}^{\infty}2^{-\frac{1}{4}j}\|\Lambda^{\frac{2+\beta}{2}}
u\|_{L^{2}}\nonumber\\
 \leq& C2^{-\frac{N}{4}}\|\Lambda^{r_{1}}
u\|_{L^{2}}^{1-\theta}\|\Lambda^{r_{2}}
u\|_{L^{2}}^{\theta}\nonumber\\
 \leq& C2^{-\frac{N}{4}}\left(\left\|\frac{\Lambda^{\alpha+\beta-1}}{g(\Lambda)}
u\right\|_{L^2}+\|u\|_{L^{2}}\right)^{1-\theta}
\left(\left\|\frac{\Lambda^{2\alpha+\beta-1}}{g^{2}(\Lambda)}
u\right\|_{L^2}+\|u\|_{L^{2}}\right)^{\theta},\nonumber
\end{align}
where $\theta$ is given by
$$\theta=\frac{2+\beta-2r_{1}}{2r_{2}-2r_{1}}<\frac{1}{2}.$$
Therefore,
\begin{align} \|\Lambda^{\frac{3+2\beta}{4}}u\|_{L^{2}} \leq& C g(2^{N})\left\|\frac{\Lambda^{\alpha+\beta-1}}{g(\Lambda)}u\right\|_{L^2}^{\frac{1}{2}}
\|\mathcal{L}u\|_{L^{2}}^{\frac{1}{2}}
+C\|u\|_{L^{2}}\nonumber\\& +C2^{-\frac{N}{4}}\left(\left\|\frac{\Lambda^{\alpha+\beta-1}}{g(\Lambda)}
u\right\|_{L^2}+\|u\|_{L^{2}}\right)^{1-\theta}
\left(\left\|\frac{\Lambda^{2\alpha+\beta-1}}{g^{2}(\Lambda)}
u\right\|_{L^2}+\|u\|_{L^{2}}\right)^{\theta}.\nonumber
\end{align}
Combining all the estimates altogether yields
\begin{align}
 &\frac{d}{dt} \left\|\frac{\Lambda^{\alpha+\beta-1}}{g(\Lambda)}u(t)\right\|^2_{L^2}+
\left\|\frac{\Lambda^{2\alpha+\beta-1}}{g^{2}(\Lambda)}u\right\|^2_{L^2}
\nonumber\\ \leq&C\|\Lambda^{\beta}w\|_{L^{2}}^{2}+C g^{2}(2^{N})\left\|\frac{\Lambda^{\alpha+\beta-1}}{g(\Lambda)}u\right\|_{L^2}
\|\mathcal{L}u\|_{L^{2}}\left\|\frac{\Lambda^{2\alpha+\beta-1}}{g^{2}(\Lambda)}
u\right\|_{L^2}
\nonumber\\&\quad+C\|u\|_{L^{2}}^{2}\left\|\frac{\Lambda^{2\alpha+\beta-1}}{g^{2}(\Lambda)}
u\right\|_{L^2}+C2^{-\frac{N}{2}}
\left(\left\|\frac{\Lambda^{\alpha+\beta-1}}{g(\Lambda)}
u\right\|_{L^2}+\|u\|_{L^{2}}\right)^{2(1-\theta)}
\nonumber\\& \quad\times\left(\left\|\frac{\Lambda^{2\alpha+\beta-1}}{g^{2}(\Lambda)}
u\right\|_{L^2}+\|u\|_{L^{2}}\right)^{2\theta}
\left\|\frac{\Lambda^{2\alpha+\beta-1}}{g^{2}(\Lambda)}
u\right\|_{L^2}.\nonumber
\end{align}
Therefore, if we set
$$A(t):=\left\|\frac{\Lambda^{\alpha+\beta-1}}{g(\Lambda)}u(t)\right\|^2_{L^2},\qquad
B(t):=\left\|\frac{\Lambda^{2\alpha+\beta-1}}{g^{2}(\Lambda)}u(t)
\right\|^2_{L^2},$$
then
\begin{align}
\frac{d}{dt}A(t)+B(t)
 \leq&C\|\Lambda^{\beta}w\|_{L^{2}}^{2}+C g^{2}(2^{N})A^{\frac{1}{2}}(t)
\|\mathcal{L}u\|_{L^{2}}B^{\frac{1}{2}}(t)
\nonumber\\&+C\|u\|_{L^{2}}^{2}B^{\frac{1}{2}}(t)
+C2^{-\frac{N}{2}}\left(A(t)+\|u\|_{L^{2}}^{2}\right)^{1-\theta}
\left(B(t)+\|u\|_{L^{2}}^{2}\right)^{\theta}B^{\frac{1}{2}}(t)\nonumber\\
 \leq&\frac{1}{2}B(t)+C g^{4}(2^{N})A(t)
\|\mathcal{L}u\|_{L^{2}}^{2}
+CA(t)
+C2^{-\frac{N}{1-2\theta}}\left(A(t)\right)^{\frac{2(1-\theta)}{1-2\theta}}
\nonumber\\&+C2^{-N}\|u\|_{L^{2}}^{4\theta}A^{2-2\theta}(t)+\phi(\|u\|_{L^{2}}),
\nonumber
\end{align}
where $\phi(t)$ is given by
$$\phi(\|u(t)\|_{L^{2}})=C(\|u(t)\|_{L^{2}}^{4}+\|u(t)\|_{L^{2}}
^{\frac{4(1-\theta)}{1-2\theta}})\leq C(t,\,u_{0},\,w_{0}).$$
This yields
$$
\frac{d}{dt}A(t)+B(t)
\leq C g^{4}(2^{N})A(t)
\|\mathcal{L}u\|_{L^{2}}^{2}
+CA(t)
+C2^{-\frac{N}{1-2\theta}}\left(A(t)\right)^{\frac{2(1-\theta)}{1-2\theta}}+
\phi(\|u(t)\|_{L^{2}}).
$$
By taking
$$2^{N}\approx e+A(t),$$
we  obtain
\begin{eqnarray}\label{s4logtt007}
\frac{d}{dt}A(t)+B(t)
\leq C g^{4}\left(e+A(t)\right)\left(e+A(t)\right)
(1+\|\mathcal{L}u\|_{L^{2}}^{2})+\phi(\|u(t)\|_{L^{2}}).
\end{eqnarray}
Noticing that
$$g^{4}\big(e+A(t)\big)\big(e+A(t)\big)\geq1,$$
we obtain from \eqref{s4logtt007} that
$$\int_{e+A(0)}^{e+A(t)}\frac{d\tau}{\tau  g^{4}(\tau)}\leq C
\int_{0}^{t}{
\left(1+\phi(\|u(\tau)\|_{L^{2}})+ \|\mathcal{L}u (\tau) \|_{L^{2}}^{2}\right)\,d\tau}.$$
Thanks to
$$\int_{e}^{\infty}\frac{d\tau}{\tau g^{4}(\tau)}=\infty$$
and the following fact due to \eqref{s4logtt001}
$$\int_{0}^{t}{
\left(1+\phi(\|u(\tau)\|_{L^{2}})+\|\mathcal{L}u (\tau) \|_{L^{2}}^{2}\right)\,d\tau}\leq C(t,\,u_{0},\,w_{0}),$$
we deduce that
$$A(t)\leq C(t,\,u_{0},\,w_{0}).$$
In addition,  \eqref{s4logtt007} implies
$$\int_{0}^{t}{B(\tau)\,d\tau}\leq C(t,\,u_{0},\,w_{0}).$$
The desired estimates \eqref{s4logtt002} and \eqref{s4logtt003} follow directly, and \eqref{s4logtt005} immediately implies \eqref{cvfdf0r}.
We thus complete the proof of this lemma.
\end{proof}

\vskip .1in
Motivated by Lemma \ref{AZL302}, we will show the following key lemma.
\begin{lemma}\label{s4AZL302}
Assume $(u_{0},w_{0})$ satisfies the assumptions stated in Theorem \ref{addTh3}.
If $\alpha\geq\frac{5}{4}$, $\beta>0$ and $\alpha+\beta\geq \frac{7}{4}$, then the corresponding solution $(u, w)$
of the system \eqref{loggnDMP} admits the following bounds for any $t>0$,
\begin{gather}
\|\Lambda^{\frac{7}{4}}u(t)\|_{L^{2}}^{2}
+\int_{0}^{t}{\|\Lambda^{\frac{7}{4}}\mathcal{L}u(\tau)\|_{L^{2}}^{2}\,d\tau}\leq
C(t,\,u_{0},\,w_{0}),\label{s4logtt009}\\
\|\Lambda^{\varrho}w(t)\|_{L^{2}}^{2}
+\int_{0}^{t}{\|\Lambda^{\varrho+\beta}w(\tau)\|_{L^{2}}^{2}\,d\tau}\leq
C(t,\,u_{0},\,w_{0}), \label{s4logtt0010}
\end{gather}
for any $\varrho<1+\beta$.
In particular, due to $\alpha\geq\frac{5}{4}$, we have
\begin{eqnarray}\label{s4logtt0011}
\int_{0}^{t}{(\|\nabla u(\tau)\|_{L^{\infty}}+\|\Lambda^{\frac{5}{2}} u(\tau)\|_{L^{2}})\,d\tau}\leq
C(t,\,u_{0},\,w_{0}).
\end{eqnarray}
\end{lemma}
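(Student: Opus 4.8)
The plan is to run, in parallel, a $\Lambda^{7/4}$-energy estimate for $u$ and a $\Lambda^{\varrho}$-energy estimate for $w$, along the lines of Lemma \ref{AZL302}, the only novelty being that the velocity is now dissipated by the Fourier multiplier $\mathcal{L}^{2}$ rather than a pure fractional power. Applying $\Lambda^{7/4}$ to $\eqref{loggnDMP}_{1}$ and testing against $\Lambda^{7/4}u$, applying $\Lambda^{\varrho}$ to $\eqref{loggnDMP}_{2}$ and testing against $\Lambda^{\varrho}w$, and summing yields
$$\tfrac12\tfrac{d}{dt}\big(\|\Lambda^{7/4}u\|_{L^{2}}^{2}+\|\Lambda^{\varrho}w\|_{L^{2}}^{2}\big)+\|\Lambda^{7/4}\mathcal{L}u\|_{L^{2}}^{2}+\|\Lambda^{\varrho+\beta}w\|_{L^{2}}^{2}\le N_{1}+N_{2}+N_{3}+N_{4},$$
where $N_{1}=-\int[\Lambda^{7/4},u\cdot\nabla]u\cdot\Lambda^{7/4}u\,dx$, $N_{2}=\int\Lambda^{7/4}(\nabla\times w)\,\Lambda^{7/4}u\,dx$, $N_{3}=\int\Lambda^{\varrho}(\nabla\times u)\,\Lambda^{\varrho}w\,dx$ and $N_{4}=-\int[\Lambda^{\varrho},u\cdot\nabla]w\,\Lambda^{\varrho}w\,dx$. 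The first move is to trade the multiplier dissipation for a genuine fractional one: arguing as in the derivation of \eqref{logtt002}, for each small $\sigma>0$ one has $\|\Lambda^{7/4}\mathcal{L}u\|_{L^{2}}^{2}\ge C\|\Lambda^{7/4+\alpha-\sigma}u\|_{L^{2}}^{2}-C\|u\|_{L^{2}}^{2}$, and since $\alpha\ge\tfrac54$ we may fix $\sigma$ with $\sigma<\beta$ and $\tfrac74+\alpha-\sigma>\tfrac52$, so that $\Lambda^{7/4+\alpha-\sigma}u$ takes over the role of $\Lambda^{\alpha+5/4}u$ from Lemma \ref{AZL302}.

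Writing $A(t):=\|\Lambda^{7/4}u\|_{L^{2}}^{2}+\|\Lambda^{\varrho}w\|_{L^{2}}^{2}$, the terms $N_{2}$ and $N_{3}$ are dispatched exactly as in Lemma \ref{AZL302} by Cauchy--Schwarz, distributing derivatives so that the velocity factor is no worse than $\Lambda^{7/4+\alpha-\sigma}u$ and the microrotation factor no worse than $\Lambda^{\varrho+\beta}w$; both are then absorbed into $\|\Lambda^{7/4}\mathcal{L}u\|_{L^{2}}^{2}+\|\Lambda^{\varrho+\beta}w\|_{L^{2}}^{2}$ at the cost of $C(\|u\|_{L^{2}}^{2}+\|w\|_{L^{2}}^{2})$, the derivative count imposing $1+\sigma<\varrho<1+\beta$. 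For $N_{1}$, the Kato--Ponce commutator inequality gives $\tfrac18\|\Lambda^{7/4}\mathcal{L}u\|_{L^{2}}^{2}+C\|\nabla u\|_{L^{p_{0}}}^{q_{0}}\|\Lambda^{7/4}u\|_{L^{2}}^{2}$ for a suitable $p_{0}\in(\tfrac{3}{\alpha-\sigma},\tfrac3\beta)$ and some exponent $q_{0}\le2$, and since $\|\nabla u\|_{L^{p_{0}}}\le C\|\Lambda^{5/2-3/p_{0}}u\|_{L^{2}}$ with $5/2-3/p_{0}<\alpha+\tfrac34$, this coefficient is time-integrable by \eqref{cvfdf0r}. (Afterwards, combining the resulting bound with the basic estimate \eqref{s4logtt001} and interpolating upgrades \eqref{s4logtt0010} to every $0\le\varrho<1+\beta$, as in the remark following Lemma \ref{AZL302}.)

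The heart of the matter — and the only place the condition $\alpha+\beta\ge\tfrac74$ is used — is $N_{4}$. The commutator estimate \eqref{qqqqqqtNCE12} with $s=-\beta\in(-1,1-\varrho)$ (admissible since $\beta<1$ and $\varrho<1+\beta$) yields $N_{4}\le C\big(\|\nabla u\|_{L^{m_{0}}}\|w\|_{H^{\varrho-\beta+3/m_{0}}}+\|u\|_{L^{2}}\|w\|_{L^{2}}\big)\big(\|\Lambda^{\varrho}w\|_{L^{2}}+\|\Lambda^{\varrho+\beta}w\|_{L^{2}}\big)$, and in the reduced case $\alpha+\beta=\tfrac74$ the Besov bookkeeping forces $m_{0}=\tfrac3\beta$ exactly, whence $\|\nabla u\|_{L^{m_{0}}}\le C\|\Lambda^{5/2-\beta}u\|_{L^{2}}=C\|\Lambda^{\alpha+3/4}u\|_{L^{2}}$ — precisely the borderline norm that \eqref{cvfdf0r} barely fails to supply. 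Here I would invoke the logarithmic machinery of Lemma \ref{dvefvb21}: split $\|\Lambda^{\alpha+3/4}u\|_{L^{2}}$ at frequency $2^{N}$, bounding the low part by $C\,g^{2}(2^{N})\big\|\tfrac{\Lambda^{\alpha+3/4}}{g^{2}(\Lambda)}u\big\|_{L^{2}}$ (using that $g$ is non-decreasing) and, via a Bernstein step and interpolation between $\Lambda^{7/4}u$ and $\Lambda^{7/4+\alpha-\sigma}u$, the high part by $C\,2^{-cN}\big(1+A(t)\big)^{\frac{1-\theta}{2}}\|\Lambda^{7/4+\alpha-\sigma}u\|_{L^{2}}^{\theta}$ for some $c>0$ and $\theta\in(0,1)$. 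Substituting into $N_{4}$, bounding $\|w\|_{H^{\varrho}}\le C(1+A(t)^{1/2})$, and applying Young's inequality, the pieces meeting the dissipations are absorbed into $\tfrac14(\|\Lambda^{7/4}\mathcal{L}u\|_{L^{2}}^{2}+\|\Lambda^{\varrho+\beta}w\|_{L^{2}}^{2})$, leaving a contribution $\lesssim\big(g^{4}(2^{N})(1+\|\tfrac{\Lambda^{\alpha+3/4}}{g^{2}(\Lambda)}u\|_{L^{2}}^{2})+2^{-2cN}(1+A(t))\big)\big(e+A(t)\big)$ plus time-integrable lower-order terms; choosing $2^{N}$ to be an appropriate fixed power of $e+A(t)$ to balance the two pieces, one arrives at
$$\tfrac{d}{dt}A(t)\le C\,g^{4}\big((e+A(t))^{p}\big)\big(e+A(t)\big)\Big(1+\big\|\tfrac{\Lambda^{\alpha+3/4}}{g^{2}(\Lambda)}u\big\|_{L^{2}}^{2}+\|u\|_{L^{2}}^{2}\Big)$$
for some $p>0$. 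Dividing by $g^{4}((e+A)^{p})(e+A)\ge1$, integrating, and performing the change of variable $\tau\mapsto\tau^{p}$, the condition \eqref{lognver56} (unaffected by this rescaling) together with the time-integrability of the right-hand side — which is exactly \eqref{s4logtt003} — forces $A(t)\le C(t,u_{0},w_{0})$; the same inequality then returns $\int_{0}^{t}(\|\Lambda^{7/4}\mathcal{L}u\|_{L^{2}}^{2}+\|\Lambda^{\varrho+\beta}w\|_{L^{2}}^{2})\,d\tau<\infty$, i.e.\ \eqref{s4logtt009} and \eqref{s4logtt0010}.

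Finally \eqref{s4logtt0011} falls out of \eqref{s4logtt009}: because $\alpha\ge\tfrac54$, the symbol inequality $|\xi|^{7/2+2\alpha}g^{-2}(\xi)\ge C|\xi|^{5+2\epsilon}-C$ holds for some small $\epsilon>0$, so $\|\Lambda^{7/4}\mathcal{L}u\|_{L^{2}}^{2}$ dominates both $\|\Lambda^{5/2}u\|_{L^{2}}^{2}$ and $\|u\|_{H^{5/2+\epsilon}}^{2}$ up to $\|u\|_{L^{2}}^{2}$; integrating in time and using the embedding $H^{5/2+\epsilon}(\mathbb{R}^{3})\hookrightarrow W^{1,\infty}(\mathbb{R}^{3})$ together with Cauchy--Schwarz in $t$ gives $\int_{0}^{t}(\|\nabla u\|_{L^{\infty}}+\|\Lambda^{5/2}u\|_{L^{2}})\,d\tau\le C(t,u_{0},w_{0})$. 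The main obstacle is the $N_{4}$ step: the exponent count lands exactly on the scaling borderline when $\alpha+\beta=\tfrac74$, so no plain Gronwall is available and the high--low/logarithmic-Gronwall device — the sole consumer of \eqref{lognver56} in this lemma — is indispensable.
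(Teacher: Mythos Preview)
Your proposal is correct and follows essentially the same strategy as the paper: a coupled $\Lambda^{7/4}$--$\Lambda^{\varrho}$ energy estimate, the lower bound $\|\Lambda^{7/4}\mathcal{L}u\|_{L^2}^2\ge C\|\Lambda^{7/4+\alpha-\sigma}u\|_{L^2}^2-C\|\Lambda^{7/4}u\|_{L^2}^2$, routine handling of $N_1,N_2,N_3$, and then the decisive high--low frequency splitting of the borderline norm $\|\Lambda^{\alpha+3/4}u\|_{L^2}$ in $N_4$, leading to an Osgood-type inequality closed by \eqref{lognver56}.

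The only notable difference is in the interpolation for the high-frequency piece of $N_4$: the paper interpolates $\|\Lambda^{\alpha+3/4+\sigma_0}u\|_{L^2}$ between the already time-integrable norm $\|\Lambda^{\alpha+3/4-\epsilon_1}u\|_{L^2}$ (from \eqref{cvfdf0r}) and the dissipation $\|\Lambda^{7/4+\alpha-\epsilon_2}u\|_{L^2}$, which after choosing $2^N\approx e+X(t)$ produces the clean ODE $\frac{d}{dt}X\le C\,R(t)\,g^4(e+X)(e+X)$ with $R\in L^1_t$. You instead interpolate between $\|\Lambda^{7/4}u\|_{L^2}$ (part of $A$) and the dissipation, which after balancing yields $g^4\big((e+A)^p\big)$ rather than $g^4(e+A)$; as you note, the substitution $\tau\mapsto\tau^p$ shows that \eqref{lognver56} is unaffected, so both routes close. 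The paper's choice keeps the Young-inequality bookkeeping slightly simpler (the residual powers of $A$ stay at $1$ rather than requiring a further reduction via the choice of $N$), but your variant is equally valid.
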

\begin{proof}
Applying $\Lambda^{\frac{7}{4}}$ to the first equation of \eqref{loggnDMP} and dotting by $\Lambda^{\frac{7}{4}}u$, we obtain
\begin{align*}
& \frac{1}{2}\frac{d}{dt}\|\Lambda^{\frac{7}{4}}u(t)\|_{L^{2}}^{2}
+\|\Lambda^{\frac{7}{4}}\mathcal{L}
u\|_{L^{2}}^{2}\nonumber\\
&=  -\int_{\mathbb{R}^{3}}{[\Lambda^{\frac{7}{4}},
u\cdot\nabla]u\cdot \Lambda^{\frac{7}{4}}u\,dx}
+\int_{\mathbb{R}^{3}}{\Lambda^{\frac{7}{4}}(\nabla\times w)
\Lambda^{\frac{7}{4}}u\,dx}\nonumber\\
&:=  \widetilde{N}_{1}+\widetilde{N}_{2}.
\end{align*}
Applying $\Lambda^{\varrho}$ to the second equation of \eqref{loggnDMP} and taking the inner product with $\Lambda^{\varrho}w$ yield
\begin{align*} &
\frac{1}{2}\frac{d}{dt}\|\Lambda^{\varrho}w(t)\|_{L^{2}}^{2}
+\|\Lambda^{\varrho+\beta}w \|_{L^{2}}^{2}+2\|\Lambda^{\varrho}w\|^2_{L^2}+\|\Lambda^{\varrho}\nabla\cdot w\|_{L^{2}}^{2}\nonumber\\&= \int_{\mathbb{R}^{3}}
\Lambda^{\varrho}\big(\nabla\times u\big)\Lambda^{\varrho}w\,dx -\int_{\mathbb{R}^{3}}
[\Lambda^{\varrho}, u \cdot
\nabla] w\Lambda^{\varrho}w\,dx\nonumber\\
&:=  \widetilde{N}_{3}+\widetilde{N}_{4}.
\end{align*}
Similar to the proof of \eqref{logtt002}, we obtain for any $\epsilon_{2}\in(0,\,\alpha)$ that
\begin{eqnarray} \label{s4logtt0014}
\|\Lambda^{\frac{7}{4}}\mathcal{L}u\|_{L^{2}}^{2}\geq
C_{3}\|\Lambda^{\alpha+\frac{7}{4}-\epsilon_{2}}u\|_{L^{2}}^{2}
-C_{4}\|\Lambda^{\frac{7}{4}}u\|_{L^{2}}^2.
\end{eqnarray}
By the Sobolev embedding inequality and the Kato-Ponce inequality,
\begin{align}
\widetilde{N}_{1}
 \leq& \|[\Lambda^{\frac{7}{4}},
u\cdot\nabla]u\|_{L^{2}}\|\Lambda^{\frac{7}{4}}u\|_{L^{2}}
 \nonumber\\
 \leq&C \|\nabla u\|_{L^{k}}
 \|\Lambda^{\frac{7}{4}}
 u\|_{L^{\frac{2k}{k-2}}}\|\Lambda^{\frac{7}{4}}u\|_{L^{2}}
  \nonumber\\
 \leq&C\left(\left\|\frac{\Lambda^{2\alpha+\beta-1}}{g^{2}
 (\Lambda)}u\right\|_{L^2}+ \|\nabla u\|_{L^{2}}\right)
 (\|\Lambda^{\frac{7}{4}}\mathcal{L}
u\|_{L^{2}}+\|\Lambda^{\frac{7}{4}}u\|_{L^{2}})
 \|\Lambda^{\frac{7}{4}}u\|_{L^{2}}
  \nonumber\\
 \leq&
 \frac{1}{16}\|\Lambda^{\frac{7}{4}}\mathcal{L}
u\|_{L^{2}}^{2}+
 C\left(1+\left\|\frac{\Lambda^{2\alpha+\beta-1}}{g^{2}(\Lambda)}u\right\|_{L^2}^{2}+ \|\nabla u\|_{L^{2}}^{2}\right)(1+
 \|\Lambda^{\frac{7}{4}}u\|_{L^{2}}^{2}),\nonumber
\end{align}
where $k>2$ satisfies
$$\frac{7-4\alpha-2\beta}{6}\leq\frac{1}{k}\leq \frac{\alpha}{3}.$$
By \eqref{s4logtt0014} and the interpolation inequality, we can bound $\widetilde{N}_{2}$ by
\begin{align}
\widetilde{N}_{2}
\leq& C\|\Lambda^{\varrho+\beta}w \|_{L^{2}}\|\Lambda^{\frac{7}{4}}u\|_{H^{\frac{11}{4}-\varrho-\beta}}\nonumber\\
 \leq&C\|\Lambda^{\varrho+\beta}w \|_{L^{2}}\|\Lambda^{\frac{7}{4}}u\|_{L^{2}}^{1-\vartheta_{1}}
 \|\Lambda^{\frac{7}{4}}\mathcal{L}u\|_{L^{2}}^{\vartheta_{1}}
  \nonumber\\
 \leq& \frac{1}{16}\|\Lambda^{\frac{7}{4}}\mathcal{L}u\|_{L^{2}}^{2}
  +\frac{1}{16}\|\Lambda^{\varrho+\beta}w \|_{L^{2}}^{2} +C\|\Lambda^{\frac{7}{4}}u\|_{L^{2}}^{2},\nonumber
\end{align}
where $\varrho>1$. Similarly, one has
\begin{align}
\widetilde{N}_{3}
 \leq&\|\Lambda^{\varrho-\beta}\big(\nabla\times u\big)\|_{L^{2}}\|\Lambda^{\varrho+\beta}w\|_{L^{2}}\nonumber\\
 \leq&C\|u\|_{H^{\varrho+1-\beta}}
 \|\Lambda^{\varrho+\beta}w\|_{L^{2}}
 \nonumber\\
 \leq&
  C\|u\|_{L^{2}}^{1-\vartheta_{2}}
 \|\Lambda^{\frac{7}{4}}\mathcal{L}u\|_{L^{2}}^{\vartheta_{2}}
\|\Lambda^{\varrho+\beta}w\|_{L^{2}}
   \nonumber\\
\leq&
  \frac{1}{16}\|\Lambda^{\frac{7}{4}}\mathcal{L}u\|_{L^{2}}^{2}
  +\frac{1}{16}\|\Lambda^{\varrho+\beta}w \|_{L^{2}}^{2}+C\|u\|_{L^{2}}^{2},\nonumber
\end{align}
where $\varrho<\frac{5}{2}$.
As in \eqref{sdsfew09}, we choose $m_{0}>2$ satisfying
$$\frac{7-4\alpha-2\beta}{6}\leq \frac{1}{m_{0}}\leq\frac{\beta}{3}$$
to obtain,  for $\alpha+\beta\geq \frac{7}{4}$ and $\varrho<1+\beta$,
\begin{align}
\widetilde{N}_{4}
 \leq& C\|[\Lambda^{\varrho}, u\cdot\nabla]w\|_{B_{2,2}^{-\beta}}\|\Lambda^{\varrho}w\|_{B_{2,2}^{\beta}}\nonumber\\
 \leq& C(\|\nabla
u\|_{L^{m_{0}}}\|w\|_{B_{\frac{2m_{0}}{m_{0}-2},2}^{\varrho-\beta}}
+\|u\|_{L^{2}}\|w\|_{L^{2}})
(\|\Lambda^{\varrho}w\|_{L^{2}}+\|\Lambda^{\varrho+\beta}w\|_{L^{2}}) \nonumber\\
 \leq&
C(\|u\|_{H^{2\alpha+\beta-1}}
\|w\|_{B_{2,2}^{\varrho-\beta+\frac{3}{m_{0}}}}+\|u\|_{L^{2}}\|w\|_{L^{2}})
(\|\Lambda^{\varrho}w\|_{L^{2}}+\|\Lambda^{\varrho+\beta}w\|_{L^{2}})
\nonumber\\
 \leq&
C(\|u\|_{H^{2\alpha+\beta-1}}
\|w\|_{H^{\varrho}}+\|u\|_{L^{2}}\|w\|_{L^{2}})
(\|\Lambda^{\varrho}w\|_{L^{2}}+\|\Lambda^{\varrho+\beta}w\|_{L^{2}}).\nonumber
\end{align}
By the Plancherel theorem and the Bernstein inequality,
\begin{align} \|u\|_{H^{2\alpha+\beta-1}} \approx& \|u\|_{L^{2}}+\|\Lambda^{2\alpha+\beta-1}u\|_{L^{2}}\nonumber\\
 \leq&\|u\|_{L^{2}}+\|S_{N}\Lambda^{2\alpha+\beta-1}u\|_{L^{2}}+
\sum_{j=N}^{\infty}\|\Delta_{j}\Lambda^{2\alpha+\beta-1}u\|_{L^{2}}
\nonumber\\
 \leq&\|u\|_{L^{2}}+C\left\|\chi(2^{-N}\xi)g^{2}(\xi)
\frac{|\xi|^{2\alpha+\beta-1}\widehat{u}(\xi)}{g^{2}(\xi)}\right\|_{L^{2}}
\nonumber\\& +C
\sum_{j=N}^{\infty}2^{-\sigma j}\|\Delta_{j}\Lambda^{\alpha+\frac{3}{4}
+\sigma}u\|_{L^{2}}\nonumber\\
 \leq&\|u\|_{L^{2}}+Cg^{2}(2^{N})\left\|
\frac{|\xi|^{2\alpha+\beta-1}\widehat{u}(\xi)}{g^{2}(\xi)}\right\|_{L^{2}}+C
2^{-\sigma N}\|\Lambda^{\alpha+\frac{3}{4}
+\sigma}u\|_{L^{2}}\nonumber\\
 \leq&\|u\|_{L^{2}}+Cg^{2}(2^{N})\left\|
\frac{\Lambda^{2\alpha+\beta-1}}{g^{2}(\Lambda)}u\right\|_{L^{2}}\nonumber\\& +C
2^{-\sigma N}\|\Lambda^{\alpha+\frac{3}{4}
-\epsilon_{1}}u\|_{L^{2}}^{1-2\sigma}\|\Lambda^{\frac{7}{4}+\alpha
-\epsilon_{2}}u\|_{L^{2}}^{2\sigma}\nonumber\\
 \leq&\|u\|_{L^{2}}+Cg^{2}(2^{N})\left\|
\frac{\Lambda^{2\alpha+\beta-1}}{g^{2}(\Lambda)}u\right\|_{L^{2}}\nonumber\\&+C
2^{-\sigma N}\|\Lambda^{\alpha+\frac{3}{4}
-\epsilon_{1}}u\|_{L^{2}}^{1-2\sigma}(\|\Lambda^{\frac{7}{4}}\mathcal{L}u(\tau)\|_{L^{2}}+
\|\Lambda^{\frac{7}{4}}u\|_{L^{2}})^{2\sigma},\nonumber
\end{align}
where $\sigma$ is given by
$$\sigma=\frac{\epsilon_{1}}{1+2\epsilon_{1}-2\epsilon_{2}}<\frac{1}{2}.$$
Combining all the estimates above, one has
 \begin{align}
 &  \frac{d}{dt}(\|\Lambda^{\frac{7}{4}}u(t)\|_{L^{2}}^{2}
+\|\Lambda^{\varrho}w(t)\|_{L^{2}}^{2})
+\|\Lambda^{\frac{7}{4}}\mathcal{L}
u\|_{L^{2}}^{2}+\|\Lambda^{\varrho+\beta}w \|_{L^{2}}^{2}\nonumber\\&\leq
C\left(1+\left\|\frac{\Lambda^{2\alpha+\beta-1}}{g^{2}(\Lambda)}u\right\|_{L^2}^{2}+ \|\nabla u\|_{L^{2}}^{2}\right)(1+
 \|\Lambda^{\frac{7}{4}}u\|_{L^{2}}^{2})\nonumber\\&\quad
 +C\left(g^{2}(2^{N})\left\|
\frac{\Lambda^{2\alpha+\beta-1}}{g^{2}(\Lambda)}u\right\|_{L^{2}}
\|w\|_{H^{\varrho}}+\|u\|_{L^{2}}\|w\|_{L^{2}}\right)
(\|\Lambda^{\varrho}w\|_{L^{2}}+\|\Lambda^{\varrho+\beta}w\|_{L^{2}})
\nonumber\\&\quad
 +C\left(2^{-\sigma N}\|\Lambda^{\alpha+\frac{3}{4}
-\epsilon_{1}}u\|_{L^{2}}^{1-2\sigma}(\|\Lambda^{\frac{7}{4}}\mathcal{L}u(\tau)\|_{L^{2}}+
\|\Lambda^{\frac{7}{4}}u\|_{L^{2}})^{2\sigma}
\|w\|_{H^{\varrho}}+\|u\|_{L^{2}}\|w\|_{L^{2}}\right)\nonumber\\&\quad\times
(\|\Lambda^{\varrho}w\|_{L^{2}}+\|\Lambda^{\varrho+\beta}w\|_{L^{2}}).\nonumber
 \end{align}
Writing
$$X(t):=\|\Lambda^{\frac{7}{4}}u(t)\|_{L^{2}}^{2}
+\|\Lambda^{\varrho}w(t)\|_{L^{2}}^{2},\qquad Y(t):=\|\Lambda^{\frac{7}{4}}\mathcal{L}
u(t)\|_{L^{2}}^{2}+\|\Lambda^{\varrho+\beta}w(t)\|_{L^{2}}^{2},$$
and choosing
$$2^{N}\approx e+X(t),$$
 we obtain
\begin{align}  \frac{d}{dt}X(t)
+Y(t)  \leq&
C\left(1+\left\|\frac{\Lambda^{2\alpha+\beta-1}}{g^{2}(\Lambda)}u\right\|_{L^2}^{2}+ \|\nabla u\|_{L^{2}}^{2}\right)\big(e+X(t)\big)\nonumber\\& \quad
 +C\left(g^{2}\big(e+X(t)\big)\left\|
\frac{\Lambda^{2\alpha+\beta-1}}{g^{2}(\Lambda)}u\right\|_{L^{2}}
\big(\|w\|_{L^{2}}+X^{\frac{1}{2}}(t)\big)+\|u\|_{L^{2}}\|w\|_{L^{2}}\right)
\nonumber\\
&\quad\times
\big(X^{\frac{1}{2}}(t)+Y^{\frac{1}{2}}(t)\big)
\nonumber\\&\quad
 +C\big(e+X(t)\big)^{-\sigma}\|\Lambda^{\alpha+\frac{3}{4}
-\epsilon_{1}}u\|_{L^{2}}^{1-2\sigma}(X^{\frac{1}{2}}(t)+Y^{\frac{1}{2}}(t))^{2\sigma}
\big(\|w\|_{L^{2}}+X^{\frac{1}{2}}(t)\big)\nonumber\\&\quad\times
\big(X^{\frac{1}{2}}(t)+Y^{\frac{1}{2}}(t)\big)+C\|u\|_{L^{2}}\|w\|_{L^{2}}
\big(X^{\frac{1}{2}}(t)+Y^{\frac{1}{2}}(t)\big).\nonumber
 \end{align}
Thanks to the H$\rm\ddot{o}$lder inequality,
\begin{align}  \frac{d}{dt}X(t)
+Y(t) \leq&
C\left(H(\|u\|_{L^{2}},\,\|w\|_{L^{2}})+\left\|\frac{\Lambda^{2\alpha+\beta-1}}
{g^{2}(\Lambda)}u\right\|_{L^2}^{2}+ \|\Lambda^{\alpha+\frac{3}{4}
-\epsilon_{1}}u\|_{L^{2}}^{2}\right)\big(e+X(t)\big)\nonumber\\&\quad
 +C\left(H(\|u\|_{L^{2}},\,\|w\|_{L^{2}})+\left\|
\frac{\Lambda^{2\alpha+\beta-1}}{g^{2}(\Lambda)}u\right\|_{L^{2}}^{2}\right)
\big(e+X(t)\big)g^{4}\big(e+X(t)\big)
\nonumber\\&\quad+C H(\|u\|_{L^{2}},\,\|w\|_{L^{2}})+\frac{1}{2}Y(t),\nonumber
 \end{align}
where $H(b_{1},\,b_{2})$ is an increasing smooth function with respective to both $b_{1}$ and $b_{2}$, and thus satisfies
$$\int_{0}^{t}H(\|u(\tau)\|_{L^{2}},\,\|w(\tau)\|_{L^{2}})\,d\tau\leq C(t,\,u_{0},\,w_{0}).$$
Noticing the following fact
$$\big(e+X(t)\big)g^{4}\big(e+X(t)\big)\geq1,$$
we finally get
\begin{eqnarray}  \label{s4logdf15yu}\frac{d}{dt}X(t)
+Y(t)\leq CR(t)\big(e+X(t)\big)g^{4}\big(e+X(t)\big),
 \end{eqnarray}
where $R(t)$ is given by
$$R(t):=H(\|u(t)\|_{L^{2}},\,\|w(t)\|_{L^{2}})
+\left\|\frac{\Lambda^{2\alpha+\beta-1}}{g^{2}(\Lambda)}u(t)\right\|_{L^2}^{2}+ \|\Lambda^{\alpha+\frac{3}{4}
-\epsilon_{1}}u(t)\|_{L^{2}}^{2}.$$
Recalling the assumption on $g$,
$$\int_{e}^{\infty}\frac{d\tau}{\tau g^{4}(\tau)}=\infty$$
and the bound due to \eqref{s4logtt002} and \eqref{cvfdf0r}
$$\int_{0}^{t}{R(\tau)\,d\tau}\leq C(t,\,u_{0},\,w_{0}),$$
we obtain
$$\int_{e
+X(0)}^{e
+X(t)}\frac{d\tau}{\tau g^{4}(\tau)}\leq \int_{0}^{t}{R(\tau)\,d\tau}\leq C(t,\,u_{0},\,w_{0}).$$
That is,  $X(t)$ must be finite for any given $t>0$,
\begin{eqnarray}  \label{s4logdf16yu}
X(t)\leq C(t,\,u_{0},\,w_{0}).
\end{eqnarray}
By \eqref{s4logdf15yu},
\begin{eqnarray}  \label{s4logdf17yu}
\int_{0}^{t}{Y(\tau)\,d\tau}\leq C(t,\,u_{0},\,w_{0}).
\end{eqnarray}
The estimates \eqref{s4logdf16yu} and \eqref{s4logdf17yu} along with \eqref{s4logtt0014} imply \eqref{s4logtt009} and \eqref{s4logtt0010}. Clearly \eqref{s4logtt0011} is  an easy consequence of \eqref{s4logtt009}. In fact, due to
$\alpha\geq \frac{5}{4}$, we invoke the inequality with  $0<\epsilon_{2}<\frac{5}{4}+\alpha$,
 $$\|\Lambda^{\frac{5}{2}} u\|_{L^{2}},\,\|\nabla u\|_{L^{\infty}}\leq C\|u\|_{L^{2}}
 +C\|\Lambda^{\alpha+\frac{7}{4}-\epsilon_{2}}u\|_{L^{2}},$$
which is  \eqref{s4logtt0011}.
This completes the proof of Lemma \ref{s4AZL302}.
\end{proof}

\vskip .1in
The estimate \eqref{s4logtt0011} and Lemma \ref{AZL3dsf8} allow us to obtain the following bound.
\begin{lemma}
Assume $(u_{0},w_{0})$ satisfies the assumptions stated in Theorem \ref{addTh3}.
If $\alpha\geq \frac{5}{4}$, $\beta>0$ and $\alpha+\beta\geq \frac{7}{4}$, then the corresponding solution $(u, w)$
of the system \eqref{loggnDMP} admits the following bound for any $t>0$,
\begin{eqnarray}\label{s4sfdsg65df}
\int_{0}^{t}{\|w(\tau)\|_{L^{\infty}}^{2}\,d\tau}\leq
C(t,\,u_{0},\,w_{0}).
\end{eqnarray}
\end{lemma}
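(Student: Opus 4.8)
The plan is to mimic the proof of Lemma~\ref{AZL3dsf8}, since the only input used there beyond the structure of the $w$-equation was the bound \eqref{AZ001}, namely $\int_0^t\|\Lambda^{\frac52}u(\tau)\|_{L^2}^2\,d\tau<\infty$, together with the basic energy estimate. In the present setting the dissipation in the $u$-equation is the weaker operator $\mathcal L^2$, but Lemma~\ref{s4AZL302} has already supplied us with the crucial substitute \eqref{s4logtt0011}, which gives $\int_0^t\|\Lambda^{\frac52}u(\tau)\|_{L^2}^2\,d\tau\le C(t,u_0,w_0)$ (and even $\int_0^t\|\nabla u\|_{L^\infty}\,d\tau<\infty$). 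The $w$-equation in \eqref{loggnDMP} is identical in form to $\eqref{3DMP}_2$ (with the harmless coefficient choices $2\|w\|^2$ in place of $4\kappa\|w\|^2$), so the same energy identity and the same commutator estimates apply verbatim.

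First I would fix $\sigma$ with $\frac32-\beta<\sigma\le\frac32$, apply $\Lambda^\sigma$ to the $w$-equation of \eqref{loggnDMP} and take the $L^2$ inner product with $\Lambda^\sigma w$, obtaining exactly \eqref{sdsfew20} with $L_1+L_2$ on the right. For $L_1=\int \Lambda^\sigma(\nabla\times u)\Lambda^\sigma w\,dx$, I would use the Gagliardo--Nirenberg interpolation exactly as in \eqref{sdsfew21}: $L_1\le C\|\Lambda^{\sigma+1-\beta}u\|_{L^2}\|\Lambda^{\sigma+\beta}w\|_{L^2}\le\frac1{16}\|\Lambda^{\sigma+\beta}w\|_{L^2}^2+C\|u\|_{L^2}^2+C\|\Lambda^{\frac52}u\|_{L^2}^2$, which needs $\sigma\le\beta+\frac32$ — satisfied since $\sigma\le\frac32\le\beta+\frac32$. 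For $L_2$, using $\nabla\cdot u=0$ and the Kato--Ponce commutator inequality exactly as in \eqref{sdsfew22} (with $\max\{\frac3\beta,2\}<m_1<\infty$), I would arrive at $L_2\le\frac1{16}\|\Lambda^{\sigma+\beta}w\|_{L^2}^2+C(1+\|u\|_{L^2}^2+\|\Lambda^{\frac52}u\|_{L^2}^2)\|\Lambda^\sigma w\|_{L^2}^2$, again requiring $\frac32-\beta<\sigma\le\frac32$. Inserting these yields the differential inequality $\frac{d}{dt}\|\Lambda^\sigma w\|_{L^2}^2+\|\Lambda^{\sigma+\beta}w\|_{L^2}^2\le C(1+\|u\|_{L^2}^2+\|\Lambda^{\frac52}u\|_{L^2}^2)(1+\|\Lambda^\sigma w\|_{L^2}^2)$.

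Then I would invoke \eqref{s4logtt0011} (which gives $\int_0^t\|\Lambda^{\frac52}u\|_{L^2}^2\,d\tau<\infty$, hence the integrating factor is in $L^1_t$) together with the basic bound \eqref{s4logtt001} for $\|u\|_{L^2}$, and Gronwall's inequality gives $\|\Lambda^\sigma w(t)\|_{L^2}^2+\int_0^t\|\Lambda^{\sigma+\beta}w(\tau)\|_{L^2}^2\,d\tau\le C(t,u_0,w_0)$ for every $\sigma\le\frac32$; combining with \eqref{s4logtt001} this extends to all $0\le\sigma\le\frac32$. Finally, since $\beta>0$, picking $\sigma$ with $\frac32-\beta<\sigma\le\frac32$ and $\sigma+\beta>\frac32$ gives $\Lambda^\sigma w\in L^\infty_tL^2_x$ and $\Lambda^{\sigma+\beta}w\in L^2_tL^2_x$ with $\sigma+\beta>\frac32$, and by the Sobolev embedding $H^{\sigma'}\hookrightarrow L^\infty$ for $\sigma'>\frac32$ together with interpolation $\|w\|_{L^\infty}\le C\|w\|_{H^\sigma}^\theta\|w\|_{H^{\sigma+\beta}}^{1-\theta}$ for a suitable $\theta\in(0,1)$ (with $\theta\sigma+(1-\theta)(\sigma+\beta)>\frac32$), we get $\int_0^t\|w(\tau)\|_{L^\infty}^2\,d\tau\le C(t,u_0,w_0)$, which is \eqref{s4sfdsg65df}. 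There is essentially no obstacle here beyond bookkeeping: the one point to check is that the admissible range $\frac32-\beta<\sigma\le\frac32$ is nonempty (true since $\beta>0$) and simultaneously allows $\sigma+\beta>\frac32$ (automatic), so the argument closes exactly as in Lemma~\ref{AZL3dsf8}, with \eqref{s4logtt0011} playing the role \eqref{AZ001} played there.
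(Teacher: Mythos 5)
Your proof is correct and matches the paper's intent exactly: the paper states this lemma without a written proof, deferring to ``the estimate \eqref{s4logtt0011} and Lemma \ref{AZL3dsf8},'' and you have simply carried out that argument, correctly noting that the proof of Lemma \ref{s4AZL302} actually supplies the $L^2_t$ bound on $\|\Lambda^{5/2}u\|_{L^2}$ needed to run the Gronwall step from Lemma \ref{AZL3dsf8}. The only minor inefficiency is the closing interpolation for $\|w\|_{L^\infty}$: taking $\sigma=\frac32$ directly gives $\|w\|_{L^\infty}\le C\|w\|_{H^{\frac32+\beta}}$ and hence the result without interpolating between $H^\sigma$ and $H^{\sigma+\beta}$, but this is a cosmetic point.
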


\vskip .1in
Finally we provide the global $H^s$-estimate for $u$ and $w$, and thus finish the proof of Theorem \ref{addTh3}.

\begin{proof}[{Proof of Theorem \ref{addTh3}}]
As in \eqref{tsdfwef}, we have
\begin{align}&
\frac{1}{2}\frac{d}{dt}(\|\Lambda^{s} u\|_{L^{2}}^{2}+\|\Lambda^{s}
w\|_{L^{2}}^{2})+ \|\Lambda^{s}\mathcal{L}
  u\|_{L^{2}}^{2}+\|\Lambda^{s+\beta}  w\|_{L^{2}}^{2}+2\|\Lambda^{s}  w\|_{L^{2}}^{2}+\|\Lambda^{s}\nabla\cdot w\|_{L^{2}}^{2}\nonumber\\ &:=J_{1}+J_{2}+J_{3}.\nonumber
\end{align}
By means of the proof of \eqref{logtt002}, we have, for any $0<\epsilon_{3}<\alpha$,
\begin{eqnarray} \label{sxcf68po}
\|\Lambda^{s}\mathcal{L}u\|_{L^{2}}^{2}\geq
C_{5}\|\Lambda^{\alpha+s-\epsilon_{2}}u\|_{L^{2}}^{2}
-C_{6}\|\Lambda^{s}u\|_{L^{2}}^2.
\end{eqnarray}
By \eqref{sxcf68po},
\begin{align}
J_{1} \leq& 2\|\Lambda^{s+1} u\|_{L^{2}}\|\Lambda^{s} w\|_{L^{2}}\nonumber\\
 \leq& 2(\|\Lambda^{s} u\|_{L^{2}}+\|\Lambda^{s} \mathcal{L}u\|_{L^{2}})\|\Lambda^{s} w\|_{L^{2}}
\nonumber\\
 \leq&
\frac{1}{16}\|\Lambda^{s} \mathcal{L}u\|_{L^{2}}^{2}+C(\|\Lambda^{s}  u\|_{L^{2}}^{2}+\|\Lambda^{s} w\|_{L^{2}}^{2}).\nonumber
\end{align}
We deduce from \eqref{tycvdf789} that
\begin{eqnarray}
J_{2}\leq C\|\nabla u \|_{L^{\infty}}\|\Lambda^{s} u\|_{L^{2}}^{2}.\nonumber
\end{eqnarray}
According to \eqref{tyb3178ht4}, we have
\begin{eqnarray}
J_{3}\leq\frac{1}{16}\|\Lambda^{s} \mathcal{L}u\|_{L^{2}}^{2}+C(\|\nabla u \|_{L^{\infty}}+\| w \|_{L^{\infty}}^{2})
(\|\Lambda^{s}u\|_{L^{2}}^{2}+\|\Lambda^{s}w\|_{L^{2}}^{2}).\nonumber
\end{eqnarray}
Combining all the  estimates yields
\begin{align} &
 \frac{d}{dt}(\|\Lambda^{s} u\|_{L^{2}}^{2}+\|\Lambda^{s}
w\|_{L^{2}}^{2})+\|\Lambda^{s}\mathcal{L}
  u\|_{L^{2}}^{2}+\|\Lambda^{s+\beta}  w\|_{L^{2}}^{2}\nonumber\\&\leq C(1+\|\nabla u \|_{L^{\infty}}+\| w \|_{L^{\infty}}^{2})
(\|\Lambda^{s}u\|_{L^{2}}^{2}+\|\Lambda^{s}w\|_{L^{2}}^{2}).\nonumber
\end{align}
Thanks to \eqref{s4logtt0011} and \eqref{s4sfdsg65df}, one has
$$\int_{0}^{t}{(1+\|\nabla u(\tau)\|_{L^{\infty}}+\| w(\tau)\|_{L^{\infty}}^{2})\,d\tau}\leq
C(t,\,u_{0},\,w_{0}).$$
The Gronwall inequality implies
$$\|\Lambda^{s} u(t)\|_{L^{2}}^{2}+\|\Lambda^{s}
w(t)\|_{L^{2}}^{2}+\int_{0}^{t}{(\|\Lambda^{s}\mathcal{L}
  u(\tau)\|_{L^{2}}^{2}+\|\Lambda^{s+\beta}  w(\tau)\|_{L^{2}}^{2})\,d\tau}\leq C(t,\,u_{0},\,w_{0}).$$
This completes the proof of Theorem \ref{addTh3}.
\end{proof}

\vskip .2in
\appendix
\section{Besov spaces} \label{apset1}

This appendix provides the definition of the Besov spaces and related facts that have been used in the previous sections. Some of the materials are taken from \cite{BCD}.

\vskip .1in
We start with the partition of unity. Let $B(0, r)$ and ${\mathcal C}(0, r_1, r_2)$ denote the standard ball and the annulus, respectively,
$$
B(0, r) = \left\{\xi \in \mathbb R^n: \, |\xi| \le r \right\}, \qquad
\mathcal C (0, r_1, r_2) = \left\{\xi \in \mathbb R^n:\, r_1 \le|\xi|\le r_2 \right\}.
$$
There are two compactly supported smooth radial functions $\phi$ and $\psi$ satisfying
\ben
&& \mbox{supp} \,\phi \subset B(0, 4/3), \quad \mbox{supp} \,\psi \subset \mathcal C(0, 3/4, 8/3),  \notag\\
&& \phi(\xi) + \sum_{j\ge 0}  \psi(2^{-j} \xi) = 1 \qquad \mbox{for all}\,\, \xi \in \mathbb R^n. \label{aa}
\een
We use $\widetilde h$ and $h$ to denote the inverse Fourier transforms of $\phi$ and
$\psi$ respectively,
$$
\widetilde h = \mathcal F^{-1} \phi, \quad  h = \mathcal F^{-1} \psi.
$$
In addition, for notational convenience, we write $\psi_j(\xi) = \psi(2^{-j} \xi)$. By a simple property of the Fourier transform,
$$
h_j(x) :=\mathcal F^{-1} (\psi_j)(x) = 2^{n j} \, h(2^j x).
$$
The inhomogeneous dyadic block operator $\Delta_j$ are defined as follows
\beno
&& \Delta_j f=0 \qquad \mbox{for $j\le -2$},\\
&& \Delta_{-1} f = \widetilde h \ast f = \int_{\mathbb R^n} f(x-y) \, \widetilde h(y)\,dy,\\
&& \Delta_j f = h_j \ast f = 2^{n j} \int_{\mathbb R^n} f(x-y) \,  h(2^j y)\,dy \qquad \mbox{for $j\ge 0$}.
\eeno
The corresponding inhomogeneous low frequency cut-off operator $S_j$ is defined by
$$
S_j f = \sum_{k\le j-1} \Delta_k f.
$$
For any function $f$ in the usual Schwarz class $\mathcal{S}$, (\ref{aa}) implies
\ben
\widehat f (\xi) = \phi(\xi)\, \widehat f (\xi) + \sum_{j\ge 0}  \psi(2^{-j} \xi)\,\widehat f (\xi),  \label{bb}
\een
or, in terms of the inhomogeneous dyadic block operators,
$$
f= \sum_{j\ge -1} \Delta_j f \quad \mbox{or}\quad \mbox{Id} = \sum_{j\ge -1} \Delta_j,
$$
where Id denotes the identity operator. More generally, for any $F$ in the space of tempered distributions, denoted  ${\mathcal S}'$, (\ref{bb}) still holds but in the distributional sense. That is, for $F \in {\mathcal S}'$,
\ben
F = \sum_{j\ge -1} \Delta_j F \quad \mbox{or}\quad \mbox{Id} = \sum_{j\ge -1} \Delta_j \qquad \mbox{in} \quad  {\mathcal S}'.  \label{bb1}
\een
In fact, one can verify that
$$
S_j F := \sum_{k\le j-1} \Delta_k F  \quad \to \quad F \qquad \mbox{in} \quad  {\mathcal S}'.
$$
Note that (\ref{bb1}) is referred to as the Littlewood-Paley decomposition for tempered distributions.

\vskip .1in
The inhomogeneous Besov space can be defined in terms of  $\Delta_j$ specified above.
\begin{define}
	For $1\le p,q \le \infty$ and $s\in {\mathbb R}$, the inhomogeneous Besov space $B^s_{p,q}$   consists of  the functions $f\in {\mathcal S}'$
	satisfying
	$
	\|f\|_{B^s_{p,q}} \equiv \|2^{js} \|\Delta_j f\|_{L^p} \|_{l^q}
	<\infty.
	$
\end{define}

\vskip .1in
Bernstein's inequality is a useful tool on Fourier localized functions and these inequalities trade derivatives for integrability. The following proposition provides Bernstein type inequalities for fractional derivatives.

\begin{lemma}\label{vfgty8xc}
	For $\alpha\ge0$, $1\le p\le q\le \infty$, and $f\in L^p(\mathbb{R}^n)$,
	
	(1)  if there exist  some integer $j$ and a constant $K>0$, such that,
	$\mbox{supp}\, \widehat{f} \subset \{\xi\in \mathbb{R}^n: \,\, |\xi| \le K 2^j \}$,
	  then
		$$
		\|(-\Delta)^\alpha f\|_{L^q(\mathbb{R}^n)} \le C_1\, 2^{2\alpha j +
			j n(\frac{1}{p}-\frac{1}{q})} \|f\|_{L^p(\mathbb{R}^n)};
		$$
	
	(2) if there exist some integer $j$ and constants $0<K_1\le K_2$, such that,
	$\mbox{supp}\, \widehat{f} \subset \{\xi\in \mathbb{R}^n: \,\, K_12^j\le |\xi| \le K_2 2^j \}$,
	 then
		$$
		C_1\, 2^{2\alpha j} \|f\|_{L^q(\mathbb{R}^n)} \le \|(-\Delta)^\alpha
		f\|_{L^q(\mathbb{R}^n)} \le C_2\, 2^{2\alpha j +
			j n(\frac{1}{p}-\frac{1}{q})} \|f\|_{L^p(\mathbb{R}^n)},
		$$
		where $C_1$ and $C_2$ are constants depending only on $\alpha,p$ and $q$.
		
 \end{lemma}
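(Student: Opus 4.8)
The plan is to reduce both statements to Young's convolution inequality, after writing $(-\Delta)^\alpha f$ as the convolution of $f$ with a rescaled copy of a \emph{fixed} smooth kernel; this is the classical route for Bernstein-type inequalities (cf. \cite{BCD}).

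I would first treat part (2), which is cleanest. Fix a radial function $\Phi\in C_c^\infty(\mathbb{R}^n\setminus\{0\})$ with $\Phi\equiv 1$ on the annulus $\{K_1\le|\xi|\le K_2\}$. Since $\mathrm{supp}\,\widehat f\subset\{K_12^j\le|\xi|\le K_22^j\}$ we have $\widehat f=\Phi(2^{-j}\cdot)\widehat f$, hence
$$
\widehat{(-\Delta)^\alpha f}(\xi)=|\xi|^{2\alpha}\Phi(2^{-j}\xi)\widehat f(\xi)=2^{2\alpha j}\,g(2^{-j}\xi)\,\widehat f(\xi),\qquad g(\eta):=|\eta|^{2\alpha}\Phi(\eta).
$$
The key point is that $g\in C_c^\infty(\mathbb{R}^n)$: the cutoff $\Phi$ annihilates the only point, $\eta=0$, at which $|\eta|^{2\alpha}$ is not smooth. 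Therefore $\check g\in\mathcal S$, so $\|\check g\|_{L^r}<\infty$ for every $r\in[1,\infty]$. Inverting the Fourier transform and using $\mathcal F^{-1}(g(2^{-j}\cdot))(x)=2^{jn}\check g(2^jx)$ gives $(-\Delta)^\alpha f=C\,2^{2\alpha j}2^{jn}\,\check g(2^j\cdot)\ast f$. Applying Young's inequality with $\tfrac1r=1+\tfrac1q-\tfrac1p$ — here $r\ge1$ because $p\le q$ — together with the scaling identity $2^{jn}\|\check g(2^j\cdot)\|_{L^r}=2^{jn(1-1/r)}\|\check g\|_{L^r}=2^{jn(1/p-1/q)}\|\check g\|_{L^r}$ yields the upper bound in (2). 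For the lower bound, on the same annulus $|\xi|^{-2\alpha}$ is smooth, so with $\widetilde g(\eta):=|\eta|^{-2\alpha}\Phi(\eta)\in C_c^\infty(\mathbb{R}^n)$ we recover $\widehat f=2^{-2\alpha j}\widetilde g(2^{-j}\xi)\,\widehat{(-\Delta)^\alpha f}$, that is $f=C\,2^{-2\alpha j}2^{jn}\check{\widetilde g}(2^j\cdot)\ast(-\Delta)^\alpha f$, and Young with exponent $1$ gives $\|f\|_{L^q}\le C\,2^{-2\alpha j}\|\check{\widetilde g}\|_{L^1}\|(-\Delta)^\alpha f\|_{L^q}$, which is the left-hand inequality.

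For part (1) I would run the same computation with a cutoff $\rho\in C_c^\infty(\mathbb{R}^n)$, $\rho\equiv1$ on $B(0,K)$, and $m(\eta):=|\eta|^{2\alpha}\rho(\eta)$, so that $(-\Delta)^\alpha f=C\,2^{2\alpha j}2^{jn}\check m(2^j\cdot)\ast f$ and the claimed bound follows from Young once one knows $\check m\in L^r$ for the relevant $r\in[1,\infty]$. Now $m$ is only Hölder-regular at the origin, but it is bounded with compact support, so $\check m\in L^2\cap L^\infty$; and since $m$ is $C^\infty$ away from $0$ and equals $|\eta|^{2\alpha}$ times a smooth function near $0$, one checks that $\partial^\gamma m\in L^1(\mathbb{R}^n)$ for all multi-indices with $|\gamma|<2\alpha+n$, so $\check m$ decays like $|x|^{-n-2\alpha}$ at infinity and hence $\check m\in L^1(\mathbb{R}^n)$ when $\alpha>0$; for $\alpha=0$ one simply has $m=\rho\in C_c^\infty$. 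In every case $\check m\in L^r$ for all $r\in[1,\infty]$, and part (1) follows exactly as before. I expect this last verification — the $L^1$-integrability of the rescaled kernel in the ball case, caused by the mild non-smoothness of $|\eta|^{2\alpha}$ at the origin — to be the only point requiring genuine care; the rest is routine bookkeeping with dilations and Young's inequality.
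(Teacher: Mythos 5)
The paper itself does not prove this lemma (it simply records it as a standard Bernstein-type inequality, pointing the reader to \cite{BCD}), so there is no paper proof to compare against; I will instead assess your argument directly.

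Your overall strategy — write $(-\Delta)^\alpha f$ as $f$ convolved with a dilated kernel whose Fourier transform is $|\eta|^{2\alpha}$ times a fixed cutoff, then invoke Young — is the standard route, and your treatment of part (2) is complete and correct: on the annulus the symbol $|\eta|^{2\alpha}\Phi(\eta)$ (and its reciprocal version) is in $C_c^\infty(\mathbb{R}^n)$, so its inverse Fourier transform is Schwartz and all $L^r$ norms are finite. Good.

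In part (1), however, the crucial step has a gap. You claim that $\partial^\gamma m\in L^1(\mathbb{R}^n)$ for $|\gamma|<2\alpha+n$ implies $\check m(x)=O(|x|^{-n-2\alpha})$ and hence $\check m\in L^1$. What derivative-counting actually yields is $|x|^{M}|\check m(x)|\le C$ for \emph{integer} $M$ with $M<2\alpha+n$, i.e.\ decay like $|x|^{-\lfloor 2\alpha+n\rfloor}$ at best (with the obvious adjustment if $2\alpha$ is an integer). For $0<\alpha\le\tfrac12$ this is $|x|^{-n}$, which is \emph{not} summable, so the claimed $L^1$ membership of $\check m$ does not follow from the argument as written, even though the conclusion itself is true. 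The asymptotic $\check m(x)\sim c|x|^{-n-2\alpha}$ is a genuinely finer statement about the Fourier transform of a symbol that is homogeneous of positive fractional order at the origin; it requires more than a raw count of integrable derivatives.

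The cleanest repair is a dyadic decomposition of the kernel. Write
$$m(\eta)=|\eta|^{2\alpha}\rho(\eta)=\sum_{k\le k_0}m_k(\eta),\qquad m_k(\eta):=\big[\chi(2^{-k}\eta)-\chi(2^{-k+1}\eta)\big]\,m(\eta),$$
where $\chi\in C_c^\infty$ is a ball cutoff, so that $m_k$ is supported in an annulus $|\eta|\approx 2^{k}$, and $k_0\sim\log_2 K$. On the support of $m_k$ the factor $\rho$ is smooth and $|\eta|^{2\alpha}\approx 2^{2\alpha k}$, so $m_k(\eta)=2^{2\alpha k}\,g_k(2^{-k}\eta)$ with $(g_k)$ a \emph{uniformly bounded} family in $C_c^\infty$ supported in a fixed annulus; hence $\|\check m_k\|_{L^1}=2^{2\alpha k}\|\check g_k\|_{L^1}\le C\,2^{2\alpha k}$. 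For $\alpha>0$ the geometric sum $\sum_{k\le k_0}2^{2\alpha k}$ converges, giving $\check m\in L^1$; the $L^\infty$ bound is immediate from $m\in L^1$, and interpolation finishes the $L^r$ claim. (For $\alpha=0$ you already noted $m=\rho\in C_c^\infty$, and that case is trivial.) Alternatively, one can deduce part (1) for $\alpha>0$ from part (2) by expanding the ball-localized $f$ into annular pieces $\dot\Delta_k f$, $k\le j$, and summing the annular estimates — the series converges precisely because $2\alpha+n(\tfrac1p-\tfrac1q)>0$. Either way, once $\check m\in L^r$ is secured, your Young-plus-dilation bookkeeping closes the argument exactly as you describe.
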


\vskip .3in
\section{A global regularity result when $\nabla \nabla\cdot w$ is eliminated}
\label{apset2}
As we mentioned in the introduction, the term $\nabla \nabla\cdot w$ in the equation of $w$ in the micropolar system is a ``bad" term in the sense that it prevents us from deriving the estimate $\|w\|_{L^{q}}$ with $q>2$ directly by the standard  $L^{q}$-estimate.  This appendix provides a global regularity result
for the micropolar system without this term, namely (\ref{3DMP}) with $\mu=0$. As we shall see in Theorem \ref{SIPTh4}, the requirement on the fractional powers can be reduced to $\alpha\ge \frac54$ and $\beta=0$, which is the best one at this moment.

\begin{thm}\label{SIPTh4}
	Consider the following 3D incompressible micropolar
	equations, namely,
	\begin{equation}\label{simpd8}
	\left\{\begin{array}{l}
	\partial_t u+(u\cdot\nabla) u
	+(-\Delta)^{\alpha} u +\nabla p =  \nabla \times w ,\vspace{2mm} \qquad x\in \mathbb{R}^{3},\,t>0,\\
	\partial_tw + (u\cdot\nabla) w +2 w
	= \nabla\times u,
	\vspace{2mm}\\
	\nabla\cdot u=0,\vspace{2mm}\\
	u(x,0)=u_{0}(x),\quad w(x,0)=w_{0}(x).
	\end{array}\right.
	\end{equation}
	Let $(u_{0},\,w_{0})\in H^{s}(\mathbb{R}^{3})$ with $s>\frac{5}{2}$ and $\nabla\cdot u_{0}=0$. If $\alpha\geq
	\frac{5}{4}$, then the system \eqref{simpd8} admits a unique global solution $(u,\,w)$ such that for any given $T>0$,
	$$(u,\,w)\in L^{\infty}([0, T]; H^{s}(\mathbb{R}^{3})),\quad \Lambda^{\alpha}u\in L^{2}([0, T]; H^{s}(\mathbb{R}^{3})).
	$$
\end{thm}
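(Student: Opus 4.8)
The plan is to reduce the theorem to a handful of global {\it a priori} estimates in $H^{s}(\mathbb{R}^{3})$, $s>\frac{5}{2}$; local well-posedness is supplied by Appendix \ref{apset3}, and uniqueness follows from a routine $L^{2}$ estimate for the difference of two solutions, exactly as at the end of the proof of Theorem \ref{Th1} (here $s>\frac52$). The one decisive global bound is
$$\int_{0}^{t}\|\nabla u(\tau)\|_{L^{\infty}(\mathbb{R}^{3})}\,d\tau\leq C(t,u_{0},w_{0}).$$
Once this is in hand, the $q\to\infty$ limit of the $L^{q}$-estimate for the damped transport equation $\eqref{simpd8}_{2}$ (legitimate since $\nabla\cdot u=0$ and $w_{0}\in H^{s}\hookrightarrow L^{\infty}$) gives
$$\|w(t)\|_{L^{\infty}}\leq\|w_{0}\|_{L^{\infty}}+\int_{0}^{t}\|\nabla\times u(\tau)\|_{L^{\infty}}\,d\tau\leq C(t,u_{0},w_{0}),$$
and then the standard $\Lambda^{s}$-energy estimate for $\eqref{simpd8}$, with the commutators $J_{2},J_{3}$ bounded by the Kato-Ponce inequality exactly as in the proof of Theorem \ref{Th1}, closes by Gronwall.

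First I would establish the basic $L^{2}$ estimate: pairing $\eqref{simpd8}_{1}$ with $u$ and $\eqref{simpd8}_{2}$ with $w$, using $\nabla\cdot u=0$ and $2\int_{\mathbb{R}^{3}}(\nabla\times u)\cdot w\,dx\leq 2\|w\|_{L^{2}}\|\nabla u\|_{L^{2}}\leq\frac{1}{2}\|\Lambda^{\alpha}u\|_{L^{2}}^{2}+C(\|u\|_{L^{2}}^{2}+\|w\|_{L^{2}}^{2})$ (possible since $\alpha\geq\frac{5}{4}>1$), Gronwall gives $u\in L^{\infty}([0,T];L^{2})\cap L^{2}([0,T];H^{\alpha})$ and $w\in L^{\infty}([0,T];L^{2})$. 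Next, following Lemma \ref{addt01} with $\beta=0$: applying $\Lambda^{\alpha-1}$ to $\eqref{simpd8}_{1}$, the forcing contributes $\int_{\mathbb{R}^{3}}\Lambda^{\alpha-1}(\nabla\times w)\cdot\Lambda^{\alpha-1}u\,dx=\int_{\mathbb{R}^{3}}w\cdot\Lambda^{2\alpha-2}(\nabla\times u)\,dx\leq\|w\|_{L^{2}}\|\Lambda^{2\alpha-1}u\|_{L^{2}}$, which the dissipation absorbs; combined with the Kato-Ponce commutator bound (valid since $\alpha\geq\frac{5}{4}$), this yields $u\in L^{\infty}([0,T];H^{\alpha-1})\cap L^{2}([0,T];H^{2\alpha-1})$.

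The core of the argument is a finite bootstrap that upgrades the $L^{2}_{t}$-Sobolev regularity of $u$ past $H^{5/2}$. Suppose $u\in L^{2}([0,T];H^{R})$ with $R\geq 2\alpha-1$. Then $\nabla\times u\in L^{2}_{t}H^{R-1}\hookrightarrow L^{2}_{t}L^{q}$ (for a suitable $q=q(R)<\infty$, with $q$ arbitrarily large once $R-1\geq\frac{3}{2}$), and the $L^{q}$-estimate for $\eqref{simpd8}_{2}$ — here the absence of $\nabla\nabla\cdot w$ is used crucially, since $\int_{\mathbb{R}^{3}}\nabla\nabla\cdot w\cdot|w|^{q-2}w\,dx$ has no sign — gives $\frac{d}{dt}\|w\|_{L^{q}}+2\|w\|_{L^{q}}\leq\|\nabla\times u\|_{L^{q}}$, hence $w\in L^{\infty}([0,T];L^{q})$. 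Now, applying $\Lambda^{\rho}$ to $\eqref{simpd8}_{1}$ and pairing with $\Lambda^{\rho}u$, the forcing term becomes
$$\int_{\mathbb{R}^{3}}\Lambda^{\rho}(\nabla\times w)\cdot\Lambda^{\rho}u\,dx=\int_{\mathbb{R}^{3}}w\cdot\Lambda^{2\rho}(\nabla\times u)\,dx\leq\|w\|_{L^{q}}\|\Lambda^{2\rho+1}u\|_{L^{q'}}\leq C\|w\|_{L^{q}}\|\Lambda^{2\rho+1-a_{0}}u\|_{L^{2}},$$
where $a_{0}=3(\frac{1}{2}-\frac{1}{q})$ comes from the Sobolev embedding $L^{q'}\hookrightarrow\dot{H}^{-a_{0}}$, $\frac{1}{q'}=1-\frac{1}{q}$; choosing $\rho\leq\alpha-1+a_{0}$, so that $2\rho+1-a_{0}\leq\rho+\alpha$, the forcing term is absorbed into $\frac{1}{16}\|\Lambda^{\rho+\alpha}u\|_{L^{2}}^{2}+C\|w\|_{L^{q}}^{2}$, while the commutator term is controlled by the Kato-Ponce inequality and the current bound $u\in L^{2}_{t}H^{R}$ with a time-integrable coefficient of $\|\Lambda^{\rho}u\|_{L^{2}}^{2}$, just as for $N_{1}$ in Lemma \ref{AZL302}. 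Gronwall then yields $u\in L^{\infty}_{t}H^{\rho}\cap L^{2}_{t}H^{\rho+\alpha}$ with $\rho+\alpha>R$. Iterating this finitely many times — each step enlarges $q$, hence $a_{0}$, hence the admissible $\rho$ — reaches $u\in L^{2}([0,T];H^{R^{*}})$ with $R^{*}>\frac{5}{2}$, so $\nabla u\in L^{2}_{t}H^{R^{*}-1}\hookrightarrow L^{2}_{t}L^{\infty}$ and therefore $\int_{0}^{T}\|\nabla u\|_{L^{\infty}}\,d\tau<\infty$.

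The step I expect to be the main obstacle is exactly this bootstrap. Since $\beta=0$ there is no smoothing for $w$, so the forcing $\nabla\times w$ in the velocity equation is one full derivative more singular than the only a priori information one has on $w$, namely an $L^{q}$-in-space bound produced by the transport structure — and in the presence of $\nabla\nabla\cdot w$ one could not even secure those $L^{q}$ bounds. What makes $\alpha\geq\frac{5}{4}$ enough is the quantitative trade-off in the displayed estimate for the forcing term: improving $w$ from $L^{2}$ to $L^{q}$ increases $a_{0}$ and hence the gain, while the $\frac{5}{4}$-subcritical dissipation leaves just enough room ($\rho\leq\alpha-1+a_{0}$ with $a_{0}$ as large as $\frac{3}{2}$) to push $\rho+\alpha$ above $\frac{5}{2}$; one also has to verify that the transport commutators in each $\Lambda^{\rho}u$-estimate and in the final $\Lambda^{s}$-estimate retain time-integrable coefficients under the intermediate, weaker regularity of $u$. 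With $\int_{0}^{T}\|\nabla u\|_{L^{\infty}}\,d\tau<\infty$ and $w\in L^{\infty}([0,T];L^{\infty})$ available, the global $H^{s}$-bound for $(u,w)$ — and hence the theorem — follows from the $\Lambda^{s}$-energy estimate and Gronwall argument in the proof of Theorem \ref{Th1} specialized to $\beta=0$.
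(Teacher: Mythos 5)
Your overall architecture (basic $L^2$ bound, the $H^{\alpha-1}$ estimate, a mechanism producing $\int_0^T\|\nabla u\|_{L^\infty}\,d\tau<\infty$ and $\|w(t)\|_{L^\infty}\le C$, then the $\Lambda^s$-energy closure) matches the paper's, and the first, second and last pieces are fine. The gap is in the bootstrap that is supposed to produce the Lipschitz bound. The chain
$$\int_{\mathbb{R}^{3}}\Lambda^{\rho}(\nabla\times w)\cdot\Lambda^{\rho}u\,dx\leq\|w\|_{L^{q}}\|\Lambda^{2\rho+1}u\|_{L^{q'}}\leq C\|w\|_{L^{q}}\|\Lambda^{2\rho+1-a_{0}}u\|_{L^{2}}$$
uses the Sobolev embedding in the wrong direction: since $q>2$ you have $q'<2$, and the valid statement $L^{q'}\hookrightarrow\dot H^{-a_0}$ reads $\|f\|_{\dot H^{-a_0}}\le C\|f\|_{L^{q'}}$, not the reverse. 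There is no inequality $\|\Lambda^{a_0}f\|_{L^{q'}}\le C\|f\|_{L^{2}}$ on $\mathbb{R}^3$ with $q'<2$ and $a_0>0$ (test it on $f(x)=e^{ix\cdot\xi_0}\chi(x/R)$ as $R\to\infty$: the left side grows like $R^{3/q'}$, the right like $R^{3/2}$). Without this step, the $L^\infty_tL^q$ information on $w$ obtained from the damped transport equation cannot be fed back into an $L^2$-based energy estimate for $u$: there the forcing $\nabla\times w$ must be paired through a positive Sobolev norm of $w$, which is exactly what is unavailable when $\beta=0$. Each step of your bootstrap is therefore only valid for $\rho\le\alpha-1$, and the iteration never leaves $H^{2\alpha-1}$.

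The paper's way around this (Lemma \ref{lemma123}) is to abandon the $L^2$ energy method at precisely this point and instead use the Duhamel formula together with a maximal-regularity estimate for the semigroup $e^{-t\Lambda^{2\alpha}}$ in $L^1_tL^8_x$ (Lemma 3.1 of \cite{Ye18jmaa}), applied to $\Lambda^{-1}u$. This bounds $\|\Lambda^{3/2-\epsilon}u\|_{L^1_tL^8}$ by $C+C\|uu\|_{L^1_tL^8}+C\|w\|_{L^1_tL^8}$, where the operator $\Lambda^{-1}$ exactly cancels the curl so that only the integrability of $w$, and none of its derivatives, is required; the estimate then closes because the transport equation returns $\|w\|_{L^1_tL^8}\lesssim 1+\|\Lambda^{3/2-\epsilon}u\|_{L^1_tL^8}^{\theta}$ with $\theta=\frac{17}{21-8\epsilon}<1$. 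You would need this, or some equivalent device that measures derivatives of $u$ in an $L^p$-based space with $p$ large, to make your $L^q$ bounds on $w$ do any work; as written, the proposal does not prove the theorem.
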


\vskip .2in
It suffices to consider the endpoint case $\alpha=\frac{5}{4}$ since $\alpha>\frac54$ is even simpler. Combining Lemma \ref{L301} and Lemma \ref{addt01}, we still have
\begin{eqnarray}\label{5yettr01}
\|u(t)\|_{H^{\frac{1}{4}}}^{2}
+\int_{0}^{t}{\|u(\tau)\|_{H^{\frac{3}{2}}}^{2}\,d\tau}\leq
C(t,\,u_{0},\,w_{0}).
\end{eqnarray}

\vskip .1in
With \eqref{5yettr01} at our disposal, we are in the position to establish the following key estimates.
\begin{lemma}\label{lemma123}
	Assume $(u_{0},w_{0})$ satisfies the assumptions stated in Theorem \ref{SIPTh4}, then the smooth solution $(u, w)$ of \eqref{simpd8} admits the following bounds
	\begin{align}\label{5yettr02}
	\int_{0}^{t}{
		\|\nabla u(\tau)\|_{L^{\infty}}\,d\tau}\leq C(t,\,u_{0},\,w_{0}),
	\end{align}
	\begin{align}\label{5yettr03}
	\|w(t)\|_{L^{\infty}} \leq C(t,\,u_{0},\,w_{0}).
	\end{align}
\end{lemma}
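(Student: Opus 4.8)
The plan rests on the observation that, once $\nabla\nabla\cdot w$ is removed, the $w$-equation in \eqref{simpd8} becomes a \emph{damped transport equation} $\partial_t w+(u\cdot\nabla)w+2w=\nabla\times u$. Testing it against $|w|^{q-2}w$ and using $\nabla\cdot u=0$ (the damping only helps) gives, for every $q\in[2,\infty]$,
$$\|w(t)\|_{L^q}\le\|w_0\|_{L^q}+\int_0^t\|\nabla u(\tau)\|_{L^q}\,d\tau,$$
and, testing $\Lambda^\sigma$ of the equation against $\Lambda^\sigma w$, a Sobolev version with a Kato--Ponce commutator. Thus \eqref{5yettr03} will follow from \eqref{5yettr02} together with a little Sobolev regularity on $u$, and the whole lemma reduces to propagating regularity for $u$. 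I start from \eqref{5yettr01}; since $5/4<3/2$ it already contains $\int_0^t\|\Lambda^{5/4}u\|_{L^2}^2\,d\tau<\infty$, and $\nabla u\in L^2_tH^{1/2}\hookrightarrow L^2_tL^3$ gives a first transport bound $w\in L^\infty([0,T];L^3)$.

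Next I would run a combined energy estimate for $(\Lambda^{5/4}u,\Lambda^\rho w)$ with a suitable $\rho\in[1,3/2)$, in the spirit of Lemma \ref{AZL302}. Applying $\Lambda^{5/4}$ to $\eqref{simpd8}_{1}$ and $\Lambda^\rho$ to $\eqref{simpd8}_{2}$ and adding, the dissipation yields $\|\Lambda^{5/2}u\|_{L^2}^2+2\|\Lambda^\rho w\|_{L^2}^2$ on the left. The advective commutator in the $u$-equation is treated exactly as the term $N_1$ in \eqref{sdsfew04} and, thanks to $\int_0^t\|u\|_{H^{5/4}}^2\,d\tau<\infty$, contributes only an integrable Gronwall factor; the coupling terms $\Lambda^{5/4}(\nabla\times w)$ and $\Lambda^\rho(\nabla\times u)$ are absorbed into $\tfrac18\|\Lambda^{5/2}u\|_{L^2}^2$ at the cost of $C(\|w\|_{L^2}^2+\|\Lambda^\rho w\|_{L^2}^2)$ — this is the constraint that forces $\rho$ between $1$ and $3/2$. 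The advective commutator $[\Lambda^\rho,u\cdot\nabla]w$ is rewritten as $[\Lambda^\rho\partial_i,u_i]w$ and estimated by the Kato--Ponce inequality as for $L_2$ in \eqref{sdsfew22}; the resulting bound involves $\|\nabla u\|_{L^\infty}\|\Lambda^\rho w\|_{L^2}^2$ and a term of the type $\|\Lambda^{5/2}u\|_{L^2}\,\|w\|_{L^q}\,\|\Lambda^\rho w\|_{L^2}$ with a large finite $q$, the latter handled by the transport bound on $w$ (upgraded to all $q<\infty$ once $\nabla u\in L^2_tH^{3/2}$ is available). The upshot is a differential inequality
$$\frac{d}{dt}Y+\|\Lambda^{5/2}u\|_{L^2}^2\le C\bigl(1+\|\nabla u\|_{L^\infty}+\|u\|_{H^{5/4}}^2\bigr)Y+f(t),\qquad Y:=\|\Lambda^{5/4}u\|_{L^2}^2+\|\Lambda^\rho w\|_{L^2}^2,$$
with $f\in L^1_t$.

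The only non-integrable coefficient on the right is $\|\nabla u\|_{L^\infty}$, and closing the loop means controlling it in terms of $Y$. Following the high--low frequency technique of Lemma \ref{dvefvb21} and of the proof of Lemma \ref{s4AZL302}, I would bound $\|\nabla u\|_{L^\infty}$ by $C\|u\|_{L^2}+C\sqrt{N}\,\|\Lambda^{5/2}u\|_{L^2}+C2^{-N\delta}\|\Lambda^{5/2+\delta}u\|_{L^2}$, which forces the bootstrap to be run one notch higher: couple the $\Lambda^{5/4}u$ estimate above with the analogous one at level $\Lambda^{5/4+\delta}u$ (and $\Lambda^{\rho+\delta}w$), so that the combined quantity also controls $\|\Lambda^{5/2+\delta}u\|_{L^2}$. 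With the choice $2^N\approx(e+Y)^{1/(2\delta)}$ the differential inequality becomes an Osgood-type inequality $Y'\le C\,g(t)\,(e+Y)\sqrt{\ln(e+Y)}$ with $g\in L^1_t$ (all ingredients coming from \eqref{5yettr01} and the transport bounds), which forces $Y\in L^\infty_t$ and $\int_0^t\|\Lambda^{5/2+\delta}u\|_{L^2}^2\,d\tau<\infty$. Since $5/2+\delta>5/2$, Sobolev embedding then gives \eqref{5yettr02}, and the transport bound with $q=\infty$ gives $\|w(t)\|_{L^\infty}\le\|w_0\|_{L^\infty}+\int_0^t\|\nabla u\|_{L^\infty}\,d\tau\le C(t,u_0,w_0)$, i.e. \eqref{5yettr03}.

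The main obstacle is precisely this circular dependence: the transport commutator $[\Lambda^\rho,u\cdot\nabla]w$ wants $\|\nabla u\|_{L^\infty}$, while $\|\nabla u\|_{L^\infty}$ is only reachable through $u$-regularity above $H^{5/2}$, whose estimate in turn feeds on the $w$-estimate; and, unlike for the MHD or Boussinesq systems, there is no cancellation in the coupling $\nabla\times w$ / $\nabla\times u$, and, unlike in Lemma \ref{AZL302}, there is no $w$-dissipation to absorb the commutator. The resolution is to carry $u$ and $w$ through a single combined Osgood/Gronwall bootstrap, using crucially the transport bounds $w\in L^\infty_tL^q$ for all $q<\infty$ — exactly the estimates that the absence of $\nabla\nabla\cdot w$ makes available — to keep every forcing term integrable in time.
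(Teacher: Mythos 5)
Your proposal is structurally the natural generalization of Lemma~\ref{AZL302} and Lemma~\ref{dvefvb21} to the $\beta=0$, $g\equiv1$ case, but it has a genuine gap at exactly the step you flag as "the main obstacle", and the gap is fatal: the Osgood inequality you propose does not close. In Lemma~\ref{dvefvb21} the middle term of the high--low frequency split is controlled by $g^{2}(2^{N})\sqrt{N}\,\|\Lambda^{5/2}u/g^{2}(\Lambda)\|_{L^{2}}$, and the crucial ingredient making the argument work is that $\|\Lambda^{5/2}u/g^{2}(\Lambda)\|_{L^{2}}$ is \emph{a priori} $L^{2}_{t}$ by Lemma~\ref{loglema2}. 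In the setting of Theorem~\ref{SIPTh4} there is no weakened $\Lambda^{5/2}$-type bound available a priori: \eqref{5yettr01} only gives $u\in L^{2}_{t}H^{3/2}$, leaving a full derivative of gap. When you run the high--low split for $\|\nabla u\|_{L^\infty}$ with what is actually available, the middle frequencies give $\sum_{l<N}2^{l}\|\Delta_{l}\Lambda^{3/2}u\|_{L^{2}}\lesssim 2^{N}\|\Lambda^{3/2}u\|_{L^{2}}$ (not $\sqrt{N}$ times anything integrable), and after optimizing $N$ against the high part $2^{-N\delta}\|\Lambda^{5/2+\delta}u\|_{L^{2}}$ you obtain $\|\nabla u\|_{L^\infty}\lesssim \|u\|_{L^2}+\|\Lambda^{3/2}u\|_{L^{2}}^{\delta/(1+\delta)}\|\Lambda^{5/2+\delta}u\|_{L^{2}}^{1/(1+\delta)}$; multiplying by $Y$ and absorbing the dissipation by Young produces a term of the form $\|\Lambda^{3/2}u\|_{L^{2}}^{2\delta/(2\delta+1)}\,Y^{(2+2\delta)/(1+2\delta)}$ with exponent on $Y$ strictly larger than $1$. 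The differential inequality is therefore super-linear in $Y$, not of Osgood type, and the claim that "the differential inequality becomes an Osgood-type inequality $Y'\le Cg(t)(e+Y)\sqrt{\ln(e+Y)}$ with $g\in L^{1}_{t}$" is unsubstantiated and, as far as I can see, false with only \eqref{5yettr01} in hand.

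The paper's proof takes a completely different route that never touches energy estimates above $H^{3/2}$. It eliminates the pressure by the Leray projector, applies $\Lambda^{-1}$, and invokes a maximal $L^{1}_{t}L^{8}_{x}$ regularity estimate for the fractional heat semigroup (Lemma~3.1 of \cite{Ye18jmaa}) to get $\|\Lambda^{3/2-\epsilon}u\|_{L^{1}_{t}L^{8}}\lesssim 1+\|uu\|_{L^{1}_{t}L^{8}}+\|w\|_{L^{1}_{t}L^{8}}$, where $\|uu\|_{L^{1}_{t}L^{8}}\lesssim\|u\|_{L^{2}_{t}H^{3/2}}^{2}$ is already under control by \eqref{5yettr01}. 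The damped transport equation for $w$ yields $\|w(t)\|_{L^{8}}\le\|w_{0}\|_{L^{8}}+\int_{0}^{t}\|\nabla u\|_{L^{8}}$, and the Gagliardo--Nirenberg interpolation $\|\nabla u\|_{L^{8}}\lesssim\|u\|_{L^{2}}^{1-\theta}\|\Lambda^{3/2-\epsilon}u\|_{L^{8}}^{\theta}$ with $\theta=17/(21-8\epsilon)<1$ makes the dependence of $\|w\|_{L^{1}_{t}L^{8}}$ on $\|\Lambda^{3/2-\epsilon}u\|_{L^{1}_{t}L^{8}}$ strictly sublinear, so the loop closes by simple absorption rather than by Gronwall or Osgood. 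Once $\|\Lambda^{3/2-\epsilon}u\|_{L^{1}_{t}L^{8}}$ is bounded, choosing $\epsilon<1/8$ and Sobolev embedding give \eqref{5yettr02}, and the transport bound with $q\to\infty$ gives \eqref{5yettr03}. The sublinear interpolation exponent, not a bootstrapped log-Osgood inequality, is the engine of the argument; if you want to rescue your approach you would need an a priori $L^{2}_{t}$ bound on something at the level of $\Lambda^{5/2-\eta}u$ that you do not have, and the maximal-regularity-plus-sublinear-power device is precisely what makes that unnecessary.
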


\begin{proof}
	By $\nabla\cdot u=0$, we rewrite $(\ref{simpd8})_{1}$ as follows
	\begin{eqnarray}
	\partial_{t}u+\Lambda^{\frac{5}{2}} u=-\left(\mathbb{I}_{3}+(-\Delta)^{-1}\nabla\nabla\cdot\right)\Big[\nabla\cdot(u\otimes u)-\nabla\times w\Big],\nonumber
	\end{eqnarray}
	where we have eliminated the pressure term by $\na\cdot u=0$. Applying $\Lambda^{-1}$ yields
	\begin{eqnarray}\label{5yettr04}
	\partial_{t}\Lambda^{-1}u+\Lambda^{\frac{5}{2}} \Lambda^{-1} u=-\Lambda^{-1}\left(\mathbb{I}_{3}+(-\Delta)^{-1}\nabla\nabla\cdot\right)\Big[\nabla\cdot(u\otimes u)-\nabla\times w\Big].
	\end{eqnarray}
	Applying Lemma 3.1 of \cite{Ye18jmaa} to \eqref{5yettr04} leads to
	\begin{align} \label{5yettr05}
	\|\Lambda^{\frac{3}{2}-\epsilon}u\|_{L_{t}^{1}L^{8}}
	=&\|\Lambda^{\frac{5}{2}-\epsilon}\Lambda^{-1}u\|_{L_{t}^{1}L^{8}} \nonumber\\ \leq & C(t,u_{0})+ C(t)\left\|\Lambda^{-1}\left(\mathbb{I}+(-\Delta)^{-1}\nabla\nabla\cdot\right)\Big[\nabla\cdot(u\otimes u)-\nabla\times w\Big]\right\|_{L_{t}^{1}L^{8}}\nonumber\\
	\leq & C(t,u_{0}) + C(t)\left\|\Lambda^{-1} \Big[\nabla\cdot(u\otimes u)-\nabla\times w\Big]\right\|_{L_{t}^{1}L^{8}}\nonumber\\
	\leq & C(t,u_{0}) + C(t)\|uu\|_{L_{t}^{1}L^{8}}
	+ C(t)\|w\|_{L_{t}^{1}L^{8}}\nonumber\\
	\leq & C(t,u_{0}) + C(t)\|u\|_{L_{t}^{2}L^{16}}^{2}
	+ C(t)\|w\|_{L_{t}^{1}L^{8}}
	\nonumber\\
	\leq & C(t,u_{0}) + C(t)\|u\|_{L_{t}^{2}H^{\frac{3}{2}}}^{2}
	+ C(t)\|w\|_{L_{t}^{1}L^{8}}
	\nonumber\\
	\leq & C(t,\,u_{0},\,w_{0}) + C(t)\|w\|_{L_{t}^{1}L^{8}},
	\end{align}
	where in the last line we have used \eqref{5yettr01}. By the equation of $w$
	in (\ref{simpd8}),
	\begin{align*}
	\frac{d}{dt}\|w(t)\|_{L^{8}}\leq \|\nabla u\|_{L^{8}}.
	\end{align*}
	By an interpolation inequality, one derives
	\begin{align}
	\|w(t)\|_{L^{8}}&\leq \|w_{0}\|_{L^{8}}+\int_{0}^{t}\|\nabla u(\tau)\|_{L^{8}}\,d\tau\nonumber\\
	&\leq \|w_{0}\|_{L^{8}}+C\int_{0}^{t}\|u(\tau)\|_{L^{2}}^{1-\frac{17}{21-8\epsilon}}
	\|\Lambda^{\frac{3}{2}-\epsilon}u(\tau)\|_{L^{8}}^{\frac{17}{21-8\epsilon}}\,d\tau\nonumber\\
	&\leq \|w_{0}\|_{L^{8}}
	+C\left(\int_{0}^{t}\|u(\tau)\|_{L^{2}}\,d\tau\right)^{1-\frac{17}{21-8\epsilon}}
	\left(\int_{0}^{t}
	\|\Lambda^{\frac{3}{2}-\epsilon}u(\tau)\|_{L^{8}}
	\,d\tau\right)^{\frac{17}{21-8\epsilon}},\nonumber
	\end{align}
	where $0<\epsilon<\frac{1}{2}$. Therefore, we conclude
	\begin{align}\label{5yettr06}
	\|w\|_{L_{t}^{1}L^{8}}&\leq  t\|w_{0}\|_{L^{8}}
	+Ct\left(\int_{0}^{t}\|u(\tau)\|_{L^{2}}\,d\tau\right)^{1-\frac{17}{21-8\epsilon}}
	\left(\int_{0}^{t}
	\|\Lambda^{\frac{3}{2}-\epsilon}u(\tau)\|_{L^{8}}
	\,d\tau\right)^{\frac{17}{21-8\epsilon}}
	\nonumber\\&\leq
	t\|w_{0}\|_{L^{8}}+C(t,\,u_{0},\,w_{0})
	\|\Lambda^{\frac{3}{2}-\epsilon}u\|_{L_{t}^{1}L^{8}}^{\frac{17}{21-8\epsilon}}.
	\end{align}
	Combining \eqref{5yettr05} and \eqref{5yettr06}, we have
	\begin{align}
	\|\Lambda^{\frac{3}{2}-\epsilon}u\|_{L_{t}^{1}L^{8}}&\leq C(t,\,u_{0},\,w_{0})+C(t,\,u_{0},\,w_{0})
	\|\Lambda^{\frac{3}{2}-\epsilon}u\|_{L_{t}^{1}L^{8}}^{\frac{17}{21-8\epsilon}}\nonumber\\
	&\leq C(t,\,u_{0},\,w_{0})+\frac{1}{2}\|\Lambda^{\frac{3}{2}-\epsilon}u\|_{L_{t}^{1}L^{8}},
	\nonumber
	\end{align}
	which yields
	\begin{align} \label{5yettr07}
	\|\Lambda^{\frac{3}{2}-\epsilon}u\|_{L_{t}^{1}L^{8}} \leq C(t,\,u_{0},\,w_{0}).
	\end{align}
	By further taking $0<\epsilon<\frac{1}{8}$, we obtain from \eqref{5yettr07} that
	\begin{align} \label{5yettr08}
	\int_{0}^{t}\|\nabla u(\tau)\|_{L^{\infty}}\,d\tau \leq& C\int_{0}^{t}\| u(\tau)\|_{L^{2}}\,d\tau+C\int_{0}^{t}
	\|\Lambda^{\frac{3}{2}-\epsilon}u(\tau)\|_{L^{8}}\,d\tau\nonumber\\
	\leq& C(t,\,u_{0},\,w_{0}).
	\end{align}
	By the equation of $w$ in (\ref{simpd8}), we again have, for any $2\le q<\infty$,
	\begin{align}
	\frac{d}{dt}\|w(t)\|_{L^{q}}\leq \|\nabla u\|_{L^{q}} \quad \mbox{or}\quad
	\|w(t)\|_{L^{q}} \le \|w_0\|_{L^{q}} + \int_0^t \|\nabla u\|_{L^{q}}\, d\tau.
	\nonumber
	\end{align}
	Letting $q\rightarrow\infty$ and invoking (\ref{5yettr08}), we find
	$$\|w(t)\|_{L^{\infty}} \leq C(t,\,u_{0},\,w_{0}).$$
	Thus, we complete the proof of Lemma \ref{lemma123}.
\end{proof}

\vskip .1in
By \eqref{5yettr02} and \eqref{5yettr03}, we can obtain our ultimate global $H^s$-estimate for $u$ and $w$.
\begin{proof}[{Proof of Theorem \ref{SIPTh4}}]
	Similar to \eqref{dfgdfgytr16}, we have
	\begin{align*}  &
	\frac{d}{dt}(\|\Lambda^{s} u\|_{L^{2}}^{2}+\|\Lambda^{s}
	w\|_{L^{2}}^{2})+\|\Lambda^{s+\frac{5}{4}}
	u\|_{L^{2}}^{2}\nonumber\\&\leq C(1+\|w\|_{L^{\infty}}^{2}+\|\nabla u\|_{L^{\infty}})
	(\|\Lambda^{s}u\|_{L^{2}}^{2}+\|\Lambda^{s}w\|_{L^{2}}^{2}),
	\end{align*}
	which along with the Gronwall inequality, \eqref{5yettr02} and \eqref{5yettr03} yield
	$$\|\Lambda^{s} u(t)\|_{L^{2}}^{2}+\|\Lambda^{s}
	w(t)\|_{L^{2}}^{2}+\int_{0}^{t}{ \|\Lambda^{s+\frac{5}{4}}
		u(\tau)\|_{L^{2}}^{2} \,d\tau}\leq C(t,\,u_{0},\,w_{0}).$$
	This finish the proof of Theorem \ref{SIPTh4}.
\end{proof}

\vskip .3in
\section{Local well-posedness result on (\ref{3DMP})}
\label{apset3}
For the sake of completeness,  we present in this appendix the local
well-posedness result of (\ref{3DMP}) with initial
data $(u_{0}, w_{0}) \in H^{s}(\mathbb{R}^{3})$ with $s>\frac{5}{2}$.
\begin{Pros}\label{P1}
Let $(u_{0}, w_{0}) \in H^{s}(\mathbb{R}^{3})$ with $s>\frac{5}{2}$ and $\nabla\cdot u_{0}=0$. If $\alpha+\beta>1$, then there exists a
positive time $T$ depending on $\|u_{0}\|_{H^{s}}$ and
$\|w_{0}\|_{H^{s}}$ such that
(\ref{3DMP}) admits a unique solution $(u, w)\in C([0, T];
H^{s}(\mathbb{R}^{3}))$.
\end{Pros}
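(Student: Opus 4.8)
The plan is the classical energy method, carried out in four steps: build approximate solutions, obtain uniform $H^s$ bounds on a time interval depending only on the data, pass to the limit, and then settle uniqueness and time-continuity. First I would construct approximate solutions $(u^N,w^N)$ by the Friedrichs scheme: replace every nonlinear and coupling term in \eqref{3DMP} by its image under the frequency truncation $J_N=\mathcal F^{-1}\mathbf 1_{\{|\xi|\le N\}}\mathcal F$ (together with the Leray projection in the $u$-equation). For each fixed $N$ this is an ODE with a locally Lipschitz right-hand side on the Hilbert space $J_NL^2(\mathbb R^3)$, so Picard--Lindel\"of yields a unique solution, global in time thanks to the basic $L^2$ bound of Lemma \ref{L301}.

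The heart of the matter is the uniform $H^s$ estimate. Applying $\Lambda^s$ to both truncated equations and pairing with $(\Lambda^s u^N,\Lambda^s w^N)$: the pressure disappears since $\nabla\cdot u^N=0$, the terms $4\kappa w^N$, $-\mu\nabla\nabla\cdot w^N$ and the two fractional dissipations all contribute with the favorable sign, and the transport commutators are controlled by the Kato--Ponce inequality used in the grouping $[\Lambda^s,u^N]\cdot\nabla(\cdot)$, namely $\|[\Lambda^s,u^N\!\cdot\!\nabla]u^N\|_{L^2}+\|[\Lambda^s,u^N\!\cdot\!\nabla]w^N\|_{L^2}\le C\|u^N\|_{H^s}(\|u^N\|_{H^s}+\|w^N\|_{H^s})$, using $H^s\hookrightarrow W^{1,\infty}$ for $s>\tfrac52$. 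The only place the hypothesis $\alpha+\beta>1$ is needed is the linear coupling $\nabla\times w^N$ in the $u$-equation and the analogous $\nabla\times u^N$ in the $w$-equation: picking a fixed $a$ with $1-\beta<a<\alpha$ (such $a$ exists precisely because $\alpha+\beta>1$), one bounds
\[
\Big|\int_{\mathbb R^3}\Lambda^s(\nabla\times w^N)\cdot\Lambda^s u^N\,dx\Big|\le C\|\Lambda^{s+a}u^N\|_{L^2}\,\|\Lambda^{s+1-a}w^N\|_{L^2},
\]
applies $XY\le\tfrac12X^2+\tfrac12Y^2$ and interpolates each square separately to absorb $\varepsilon\|\Lambda^{s+\alpha}u^N\|_{L^2}^2+\varepsilon\|\Lambda^{s+\beta}w^N\|_{L^2}^2$ into the dissipation (the leftover being lower order). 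This gives, with $E^N:=\|u^N\|_{H^s}^2+\|w^N\|_{H^s}^2$,
\[
\frac{d}{dt}E^N+\|\Lambda^{s+\alpha}u^N\|_{L^2}^2+\|\Lambda^{s+\beta}w^N\|_{L^2}^2\le C\,E^N+C\,(E^N)^{3/2};
\]
since $E^N(0)\le\|u_0\|_{H^s}^2+\|w_0\|_{H^s}^2$, a comparison argument produces a time $T=T(\|u_0\|_{H^s},\|w_0\|_{H^s})>0$ and a constant $C$, both independent of $N$, with $\sup_{[0,T]}E^N\le C$ and $\int_0^T(\|\Lambda^{s+\alpha}u^N\|_{L^2}^2+\|\Lambda^{s+\beta}w^N\|_{L^2}^2)\,dt\le C$.

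With these bounds, $\partial_t(u^N,w^N)$ is bounded in $L^2([0,T];H^{s-m})$ with $m=\max\{2,2\alpha,2\beta\}$, so the Aubin--Lions--Simon lemma on balls plus a diagonal extraction gives a subsequence converging to some $(u,w)\in L^\infty([0,T];H^s)$ strongly in $C([0,T];H^{s'}_{\rm loc})$ for each $s'\in(\tfrac52,s)$; as $s'>\tfrac52$ this suffices to pass to the limit in the quadratic terms, so $(u,w)$ solves \eqref{3DMP} with $(\Lambda^\alpha u,\Lambda^\beta w)\in L^2([0,T];H^s)$, $\partial_t(u,w)\in L^2([0,T];H^{s-m})$, hence $(u,w)\in C([0,T];H^{s'})\cap C_w([0,T];H^s)$. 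Uniqueness follows exactly as in the uniqueness part of the proof of Theorem \ref{Th1}: the $L^2$ energy inequality for the difference $(\delta u,\delta w)$ of two solutions with the same data controls $(u\cdot\nabla)\delta u$, $(u\cdot\nabla)\delta w$ and the extra terms $(\delta u\cdot\nabla)\overline u$, $(\delta u\cdot\nabla)\overline w$ by $\|\nabla\overline u\|_{L^\infty}+\|\nabla\overline w\|_{L^\infty}\le C(\|\overline u\|_{H^s}+\|\overline w\|_{H^s})$, the $\nabla\nabla\cdot\delta w$ term has the good sign, and the linear coupling is absorbed into $\|\Lambda^\alpha\delta u\|_{L^2}^2+\|\Lambda^\beta\delta w\|_{L^2}^2$ via the same $a$-splitting; Gronwall forces $\delta u=\delta w=0$. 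The step I expect to be the genuine obstacle is upgrading $C_w([0,T];H^s)$ to $C([0,T];H^s)$: right-continuity of $t\mapsto\|u(t)\|_{H^s}^2+\|w(t)\|_{H^s}^2$ comes from integrating the energy inequality, but because the dissipation is not time-reversible --- and, when $\beta$ is small, provides essentially no smoothing on $w$ --- left-continuity at the top level requires a Bona--Smith type argument: regularize the data to $(J_\varepsilon u_0,J_\varepsilon w_0)$, whose solutions lie in $C([0,T];H^s)$, and use the uniform $H^s$ bound, the easy convergence of these solutions in $C([0,T];H^{s'})$, and a refined difference estimate quantifying the $H^s$-gap (same commutator structure, with an extra small factor from the mollifier) to show they converge in $C([0,T];H^s)$; the limit $(u,w)$ then inherits $C([0,T];H^s)$, completing the proof of Proposition \ref{P1}.
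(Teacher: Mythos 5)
Your proposal follows the paper's own route in all but one step: Friedrichs truncation $\mathcal J_N$ plus Leray projection, Picard-Lindel\"of on $\mathcal J_N L^2$, global extension via the $L^2$ bound, uniform $H^s$ estimates by Kato--Ponce and the embedding $H^s\hookrightarrow W^{1,\infty}$ leading to $\frac{d}{dt}X\lesssim X^{3/2}$, Aubin--Lions to pass to the limit, and uniqueness by the $L^2$ energy estimate for the difference using $\|\nabla\overline u\|_{L^\infty}+\|\nabla\overline w\|_{L^\infty}\lesssim\|\overline u\|_{H^s}+\|\overline w\|_{H^s}$. You are also more explicit than the paper's appendix about exactly where $\alpha+\beta>1$ enters: the paper merely refers back to inequality \eqref{dfgdfgytr88h} (derived under $\alpha\ge1$), whereas your $a$-splitting of the coupling $\int\Lambda^s(\nabla\times w)\cdot\Lambda^s u$ with $1-\beta<a<\alpha$ is the precise mechanism, and it is good that you state it.

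The one genuine difference is the final upgrade from $C_w([0,T];H^s)\cap C([0,T];H^{s'})$ to $C([0,T];H^s)$. You propose a Bona--Smith regularization of the initial data with a refined difference estimate, noting (correctly) that the weak/no smoothing on $w$ blocks the usual parabolic trick for left-continuity. The paper instead proves time-continuity directly: it writes $\|u(t_1)-u(t_2)\|_{H^s}^2$ via the Littlewood--Paley decomposition, makes the high-frequency tail $\sum_{k\ge N}$ small from the uniform $L^\infty_t H^s$ bound, and controls the low-frequency part $\sum_{k<N}$ by integrating the equation in time between $t_1$ and $t_2$, which produces a factor $|t_1-t_2|$ at a finite cost of $2^{\max\{1,2\alpha\}N}$. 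Both routes are standard and both close the argument; the paper's is more self-contained (no approximating family beyond the Galerkin one already in play), while yours is arguably more robust since it does not rely on the uniform-in-time tail decay of the Fourier coefficients, which the paper invokes without proof. Either is acceptable; just make sure, if you use the paper's frequency-splitting argument, to justify that $\sup_{t\in[0,T]}\sum_{k\ge N}2^{2ks}\|\Delta_k u(t)\|_{L^2}^2\to 0$ as $N\to\infty$ (for instance, via the Galerkin approximations and the already-established strong convergence in $C([0,T];H^{s'})$ together with the $L^\infty_t H^s$ bound).
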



We remark that the same local well-posedness result also holds true for \eqref{logorga11}.
Similarly to \cite{MB2011,ConstantinF} (also see \cite{Dwye2019}), the main ingredient of the proof of
the  Proposition \ref{P1}  is to approximate  (\ref{3DMP}) by the Friedrichs method   to obtain a family of global
smooth solutions.

For $N>0$,  set $B(0,N)=\{\xi \in \mathbb{R}^{3}|\,|\xi|\leq N\}$ and denote by
$\chi_{B(0,N)}$  the characteristic function on $B(0,N)$.
Define the functional space
$$L^{2}_{N}:=\{f\in L^{2}(\mathbb{R}^{3})|\, \mbox{ supp }\,\widehat{f}\subset B(0,N)\},$$
and  the spectral cut-off
$$\widehat{\mathcal {J}_{N}f}(\xi)=\chi_{B(0,N)}(\xi)\widehat{f}(\xi).$$

\begin{proof}[{\textbf{Proof of Proposition \ref{P1}}}]
We first consider the following approximate system
of (\ref{3DMP}),
\begin{equation}\label{AMHD}
\left\{\aligned
&\partial_{t}u^{N}+\mathcal {P}\mathcal
{J}_{N}((\mathcal {J}_{N}u^{N}\cdot\nabla) \mathcal {J}_{N}u^{N})+ \Lambda^{2\alpha}\mathcal {J}_{N}u^{N} =\mathcal {P}\mathcal\nabla \times {J}_{N}w^{N}, \\
&\partial_{t}w^{N}+\mathcal {J}_{N} ((\mathcal {J}_{N}u^{N} \cdot \nabla)\mathcal {J}_{N}w^{N})+2\mathcal {J}_{N}w^{N}+\Lambda^{2\beta}\mathcal {J}_{N}w^{N}=\nabla\times  {J}_{N}u^{N}+  \nabla \nabla\cdot  {J}_{N}w^{N},\\
&\nabla\cdot u^{N}=0,\\
&u^{N}(x, 0)=\mathcal {J}_{N}u_{0}(x),  \quad w^{N}(x,0)=\mathcal {J}_{N}w_{0}(x) ,
\endaligned\right.
\end{equation}
where $\mathcal {P}$ denotes the standard projection onto divergence-free
vector fields. Thanks to the Cauchy-Lipschitz theorem (Picard's Theorem, see \cite{MB2011}), we can find that for
any fixed $N$, there exists a unique local solution $(u^{N},w^{N})$ on $[0,\,T_{N})$ in  the functional setting $L^{2}_{N}$ with $T_{N}=T(N, u_{0}, w_{0})$. By $\mathcal {J}_{N}^{2}=\mathcal {J}_{N},\,\mathcal
{P}^{2}=\mathcal {P}$ and $\mathcal {P}\mathcal {J}_{N}=\mathcal
{J}_{N}\mathcal {P}$, we can check that $(\mathcal {J}_{N}u^{N},\,\mathcal
 {J}_{N}w^{N})$ is also a solution to
(\ref{AMHD}) with the same initial datum. Based on the uniqueness, it yields
$$\mathcal {J}_{N}u^{N}=u^{N},\ \ \ \mathcal {J}_{N}w^{N}=w^{N}.$$
Consequently, the approximate system (\ref{AMHD}) reduces to
\begin{equation}\label{AMHD1}
\left\{\aligned
&\partial_{t}u^{N}+\mathcal {P}\mathcal
{J}_{N}((u^{N}\cdot\nabla)u^{N})+ \Lambda^{2\alpha} u^{N} =\mathcal {P}\mathcal\nabla \times  w^{N}, \\
&\partial_{t}w^{N}+\mathcal {J}_{N} ((u^{N} \cdot \nabla)w^{N})+2w^{N}+\Lambda^{2\beta}w^{N}=\nabla\times u^{N}+  \nabla \nabla\cdot  w^{N},\\
&\nabla\cdot u^{N}=0,\\
&u^{N}(x, 0)=\mathcal {J}_{N}u_{0}(x),  \quad w^{N}(x,0)=\mathcal {J}_{N}w_{0}(x).
\endaligned\right.
\end{equation}
A basic energy estimate implies $(u^{N},w^{N})$ of \eqref{AMHD1} satisfies
\begin{align}
&\|u^{N}(t)\|_{L^{2}}^{2}+\|w^{N}(t)\|_{L^{2}}^{2}+  \int_{0}^{t}{
(\|\Lambda^{\alpha}
u^{N}\|_{L^{2}}^{2}+\|w^{N}\|_{H^{\beta}}^{2})(\tau)\,d\tau} \leq C(\|u_{0}\|_{L^{2}}^{2}+\|w_{0}\|_{L^{2}}^{2},t).\nonumber
\end{align}
As a result, the local solution can be extended into a global
one, via the classical Picard Extension Theorem (see, e.g., \cite{MB2011}).
By the direct $H^s$-estimates (see for example \eqref{dfgdfgytr16}), we deduce from \eqref{AMHD1} that
\begin{align}\label{A12}
& \frac{d}{dt}(\|u^{N}(t)\|_{H^{s}}^{2}
+\|w^{N}(t)\|_{H^{s}}^{2})+\|\Lambda^{\alpha}u^{N}\|_{H^{s}}^{2}
+\|\Lambda^{\beta}w^{N}\|_{H^{s}}^{2}\nonumber\\
&\leq C(1+\|\nabla u^{N}\|_{L^{\infty}}+\|\nabla w^{N}\|_{L^{\infty}})
(\| u^{N}\|_{H^{s}}^{2}+\|w^{N}\|_{H^{s}}^{2})\nonumber\\
&\leq C(1+\|u^{N}\|_{H^{s}}+\|w^{N}\|_{H^{s}}) (\| u^{N}\|_{H^{s}}^{2}+\|w^{N}\|_{H^{s}}^{2}),
\end{align}
where we use the fact that
$$\|\nabla f\|_{L^{\infty}(\mathbb{R}^{3})}\leq C\|f\|_{H^{s}(\mathbb{R}^{3})},\quad s>\frac{5}{2}.$$
We assume in \eqref{A12} that
$\|u^{N}\|_{H^{s}}+\|w^{N}\|_{H^{s}}\geq1$ since, otherwise, we replace $\|u^{N}\|_{H^{s}}+\|w^{N}\|_{H^{s}}$ by $1+\|u^{N}\|_{H^{s}}+\|w^{N}\|_{H^{s}}$.
Denoting
$$X(t):={\| u^{N}(t)\|_{H^{s}}^{2}+\|w^{N}(t)\|_{H^{s}}^{2}},$$
we get from (\ref{A12}) that
$$\frac{d}{dt}X(t) \leq \kappa X(t)^{\frac{3}{2}},$$
where $\kappa>0$ is an absolute constant.
By direct calculations, we show that for all $N$
$$
\sup_{0\leq t\leq T}( {\| u^{N}(t)\|_{H^{s}}^{2}+\|w^{N}(t)\|_{H^{s}}^{2}})
 \leq
\frac{ {4\|u_{0}\|_{H^{s}}^{2}+4\|w_{0}\|_{H^{s}}^{2}}}
{\big(2-\kappa T \sqrt{{\|u_{0}\|_{H^{s}}^{2}
+\|w_{0}\|_{H^{s}}^{2} }}\big)^{2}},
$$
where $T>0$ satisfies
$$T<\frac{2}{\kappa\sqrt{\|u_{0}\|_{{H}^{s}}^{2}+\|w_{0}\|_{H^{s}}^{2}}}.$$
As a result, the family $(u^{N},w^{N})$ is uniformly bounded in $C([0, T]; H^{s})$ with $s>\frac{5}{2}$.
We can also show that
$$\partial_{t}u^{N},\,\,\partial_{t}w^{N}\in L_{t}^{\infty}
([0, T]);\,H_{x}^{-\vartheta}(\mathbb{R}^{3})\quad \mbox{for some} \,\,
\vartheta\geq 2.$$
As the embedding $L^{2}\hookrightarrow H^{-\vartheta}$ is locally compact, by the Aubin-Lions argument
we conclude that a subsequence
$(u^{N},w^{N})_{N\in\mathbb{N}}$ satisfies, on any compact subset of $\mathbb R^3$,
$$\|u^{N}-u^{N'}\|_{L^{2}}\rightarrow0,\quad \|w^{N}-w^{N'}\|_{L^{2}}\rightarrow0,\quad as\quad N,\,\,N'\rightarrow\infty.$$
Noticing that $\|f\|_{H^{s'}}\leq C
\|f\|_{L^{2}}^{1-\frac{s'}{s}}\|f\|_{H^{s}}^{\frac{s'}{s}}$ for $s>s'$, we have
$$\|u^{N}-u^{N'}\|_{H^{s'}}\rightarrow0,\quad \|w^{N}-w^{N'}\|_{H^{s'}} \rightarrow0,\quad
as\quad N,\,\,N'\rightarrow\infty.$$
Then we have strong convergence limit $(u, w)\in C([0, T];
H^{s'}(\mathbb{R}^{3}))$ for any $s'<s$;
hence, up to extraction, the sequence $(u^{N},w^{N})_{N\in\mathbb{N}}$ has a limit
$(u,\,w)$ satisfying
\begin{equation}\label{AMHD11}
\left\{\begin{array}{l}
\partial_t u+\mathcal {P}(u\cdot\nabla) u
       +\Lambda^{2\alpha} u =  \mathcal {P}\nabla \times w,\vspace{2mm}\\
\partial_tw + (u\cdot\nabla) w +2 w+\Lambda^{2\beta}w
             = \nabla\times u+  \nabla \nabla\cdot w,
             \vspace{2mm}\\
\nabla\cdot u=0,\vspace{2mm}\\
u(x,0)=u_{0}(x),\quad w(x,0)=w_{0}(x).
\end{array}\right.
\end{equation}
Furthermore, it is not hard to check that $(u, w)\in L^{\infty}([0, T];
H^{s}(\mathbb{R}^{3}))$. Finally, we claim that
$(u, w)\in C([0, T]; H^{s}(\mathbb{R}^{3}))$. It
suffices to consider $u\in C([0, T]; H^{s}(\mathbb{R}^{3})$ since the
same procedure can be applied to $w$ to obtain the desired result.
First, one has
 \begin{align}
 \sup_{0\leq t\leq T}(\|u\|_{H^{s}}+\|w\|_{H^{s}})\leq C(T)<\infty.\nonumber
 \end{align}
By the equivalent norm, we get
 \begin{align}\label{t2.03}
 \|u(t_{1})-u(t_{2})\|_{H^{s}}=\Big\{(\sum_{k<N}+ \sum_{k\geq N})
 (2^{ks}\|\Delta_{k}u(t_{1})-\Delta_{k}u(t_{2})\|_{L^{2}})^{2}
 \Big\}^{\frac{1}{2}}.
 \end{align}
Let $\varepsilon>0$ be arbitrarily small. Thanks to $u\in
L^{\infty}([0, T]; H^{s}(\mathbb{R}^{3}))$, there exists an integer
$N=N(\varepsilon)>0$ such that
 \begin{align}\label{t2.04}
 \Big\{\sum_{k\geq N}
 (2^{ks}\|\Delta_{k}u(t_{1})-\Delta_{k}u(t_{2})\|_{L^{2}})^{2}
 \Big\}^{\frac{1}{2}}<\frac{\varepsilon}{2}.
 \end{align}
Appealing to $(\ref{AMHD11})_{1}$ implies
\begin{align}
\Delta_{k}u(t_{1})-\Delta_{k}u(t_{2}) =&\int_{t_{1}}^{t_{2}}{\frac{d}{d\tau}
\Delta_{k}u(\tau)\,d\tau}\nonumber\\
 =&-\int_{t_{1}}^{t_{2}}{ \Delta_{k}\mathcal {P}[\nabla \times w+(u\cdot\nabla) u+ \Lambda^{2\alpha} u](\tau)\,d\tau}.\nonumber
\end{align}
This allows us to derive
\begin{align}
& \sum_{k<N}
 2^{2ks}\|\Delta_{k}u(t_{1})-\Delta_{k}u(t_{2})\|_{L^{2}}^{2}\nonumber\\
 &= \sum_{k<N}
 2^{2ks}\Big(\Big\|\int_{t_{1}}^{t_{2}}{ \Delta_{k}\mathcal {P}[\nabla \times w+(u\cdot\nabla) u+\Lambda^{2\alpha} u](\tau)\,d\tau}\Big\|_{L^{2}}\Big)^{2}\nonumber\\
&\leq \sum_{k<N}
 2^{2ks}\Big(\int_{t_{1}}^{t_{2}}{ \|\Delta_{k}[\nabla \times w+(u\cdot\nabla) u+\Lambda^{2\alpha} u]\|_{L^{2}}(\tau)\,d\tau}\Big)^{2}
\nonumber\\
&\leq \sum_{k<N}
 2^{2ks}\Big(\int_{t_{1}}^{t_{2}}{ [\|\Delta_{k}\nabla \times w
\|_{L^{2}}+ \|\|\Delta_{k}(u\cdot\nabla u)\|_{L^{2}}+
\|\|\Delta_{k}\Lambda^{2\alpha}u\|_{L^{2}}](\tau)\,d\tau}\Big)^{2} \nonumber\\
&= \sum_{k<N}
 2^{2k}\Big(\int_{t_{1}}^{t_{2}}{ 2^{k(s-1)}\|\Delta_{k}\nabla \times w
(\tau)\|_{L^{2}}\,d\tau}\Big)^{2}
\nonumber\\
&\quad+ \sum_{k<N}
 2^{2k}\Big(\int_{t_{1}}^{t_{2}}{  2^{k(s-1)}\|\Delta_{k}\nabla\cdot(u\otimes u)(\tau)\|_{L^{2}} \,d\tau}\Big)^{2}
\nonumber\\
&\quad+
\sum_{k<N}
 2^{4\alpha k}\Big(\int_{t_{1}}^{t_{2}}{
2^{ks}\|\Delta_{k}u(\tau)\|_{L^{2}}\,d\tau}\Big)^{2}
\nonumber\\
&\leq C\sum_{k<N}
 2^{2k}\Big(\|w\|_{L_{t}^{\infty}H^{s}}^{2}|t_{1}-t_{2}|^{2}
 +\|uu\|_{L_{t}^{\infty}H^{s}}^{2}|t_{1}-t_{2}|^{2} \Big)\nonumber\\&
 \quad +C\sum_{k<N}
 2^{4\alpha k} |t_{1}-t_{2}|^{2}
 \|u\|_{L_{t}^{\infty}H^{s}}^{2} \nonumber\\
&\leq C\sum_{k<N}
 2^{2k}|t_{1}-t_{2}|^{2}\Big(\|w\|_{L_{t}^{\infty}H^{s}}^{2}+
\|u\|_{L_{t}^{\infty}L^{\infty}}^{2}\| u\|_{L_{t}^{\infty}H^{s}}^{2}\Big)\nonumber\\&
 \quad +C\sum_{k<N}
 2^{4\alpha k} |t_{1}-t_{2}|^{2}
 \|u\|_{L_{t}^{\infty}H^{s}}^{2} \nonumber\\
&\leq  C
 2^{2N}|t_{1}-t_{2}|^{2}\Big(\|w\|_{L_{t}^{\infty}H^{s}}^{2}+\|u\|_{L_{t}^{\infty}
 H^{s}}^{4} \Big)+C2^{4\alpha N} |t_{1}-t_{2}|^{2}
 \|u\|_{L_{t}^{\infty}H^{s}}^{2},\nonumber
\end{align}
which implies
 \begin{align}\label{t2.07}
 \Big\{\sum_{k<N}
 (2^{ks}\|\Delta_{k}u(t_{1})-\Delta_{k}u(t_{2})\|_{L^{2}})^{2}
 \Big\}^{\frac{1}{2}}<\frac{\varepsilon}{2}
 \end{align}
provided that $|t_{1}-t_{2}|$ is small enough. The desired $u\in C([0, T]; H^{s}(\mathbb{R}^{3})$ follows from (\ref{t2.03}), (\ref{t2.04}) and
(\ref{t2.07}). Since $(u, w)$ are all in Lipschitz space, the uniqueness follows directly (see the end of Section \ref{sec2}). This completes the proof of Proposition \ref{P1}.
\end{proof}

\vskip .2in
\section*{Acknowledgements}
The research  of D. Wang is partially supported by the National Science Foundation under grants DMS-1613213 and DMS-1907519.
The research  of J. Wu is partially supported by the National Science Foundation under grant DMS 1624146 and the AT\&T Foundation at Oklahoma State University.
Z. Ye is supported by the National Natural Science Foundation of China (No. 11701232), the Natural Science Foundation of Jiangsu Province (No. BK20170224) and the Qing Lan Project of Jiangsu Province.
Part of this work was done when Z. Ye visited the Department of Mathematics, University of Pittsburgh; and he appreciates the hospitality of Prof. Dehua Wang and Prof. Ming Chen.
The authors would like to thank the anonymous referee for profitable suggestions and valuable comments.

\vskip .3in

\end{document}